\newcommand{\note}[1]{}
\renewcommand\marginpar[1]{}
\newcommand{\average}{-\!\!\!\!\!\!\int}
\newcommand{\bigfrac}[2]{ { \displaystyle \frac{#1}{#2}}}
\newcommand{\comment}[1]{}
\renewcommand{\div}{{\mathop{\rm div}\nolimits}}
\newcommand{\diam}{\mathop{\rm diam}\nolimits}
\newcommand{\dist}{\mathop{\rm dist}\nolimits}
\providecommand{\qed}{\vrule height 6pt depth 0pt width 3 pt}
\newcommand{\reals}{{\bf R}}
\renewcommand{\Re}{\mathop{\rm Re}\nolimits}
\newcommand{\supp}{\mathop{\rm supp}\nolimits }
\newcommand{\BibTeX}{{\rm B\kern-.05em{\sc i\kern-.025em b}\kern-.08em     
    T\kern-.1667em\lower.7ex\hbox{E}\kern-.125emX}}
\newcommand{\pnabla}{\nabla'} 
\newcommand{\crease}{\Lambda}
\newcommand{\sring}[1]{\Sigma_{#1}}
\newcommand{\funsol}{\Xi}
\newcommand{\ball}[2]{B_{#2}(#1)}
\newcommand{\sball}[2]{\Delta_{#2}(#1)}
\newcommand{\cyl}[2]{Z_{#2}(#1)}
\newcommand{\locdom}[2]{\Omega_{#2}(#1)}
\newcommand{\dball}[2]{\Psi _{ #2}(#1)}
\newcommand{\nontan}[1]{#1^*}
\newcommand{\bdry}{\partial}
\newcommand{\ntar}[1]{\Gamma(#1)}
\newcommand{\ellipconst}{\lambda}
\newcommand{\sobolev}[2]{W^{#1,#2}}
\providecommand{\tangrad}{\nabla _t}
\newenvironment{proof}[1][Proof]{\begin{trivlist}\item[\hskip \labelsep
{\it #1. }]}{\hfill \qed \goodbreak \end{trivlist}}
\numberwithin{equation}{section} 
\newtheorem{theorem}[equation]{Theorem}
\newtheorem{lemma}[equation]{Lemma}
\renewcommand{\thetheorem}{\arabic{section}.\arabic{theorem}}
\renewcommand{\theequation}{\arabic{section}.\arabic{equation}}
\begin{document}
\begin{bibunit}
\title{The mixed problem for the Laplacian in Lipschitz domains}

\author{
Katharine A. Ott
\footnote{Research supported, in part, by the National
Science Foundation.} 
\\ Department of Mathematics 
\\University of Kentucky
\\ Lexington, Kentucky 
\and 
Russell M. Brown
\\ Department of Mathematics 
\\University of Kentucky
\\ Lexington, Kentucky }

\date{}

\maketitle

\abstract{ 
  We consider the mixed boundary value problem, or Zaremba's
  problem, for the Laplacian in a bounded Lipschitz domain $\Omega$ in
  $ \reals ^ n$, $n\geq 2$. We decompose the boundary $ \partial
  \Omega= D\cup N$ with $D$ and $N$ disjoint. The boundary between $D$
  and $N$ is assumed to be a Lipschitz surface in $\partial
  \Omega$. We find an exponent $q_0>1$  so that for $ p $ between $ 1 $
  and $q_0$ we may solve the mixed problem for $L^p$. Thus, if we
  specify Dirichlet data on $D$ in the Sobolev space $\sobolev 1 p
  (D)$ and Neumann data on $N$ in $ L^ p (N)$, the mixed problem with
  data $f_N$ and $f_D$ has a unique solution and the non-tangential
  maximal function of the gradient lies in $L^p( \partial \Omega)$.
  We also obtain results for $p=1$ when the data comes from Hardy
  spaces.

{\em Keywords: }Mixed boundary value  problem, Laplacian

{\em Mathematics subject classification: }35J25
}

\section{Introduction}

Over the past thirty years, there has been a great deal of interest in
studying boundary value problems for the Laplacian in Lipschitz
domains.  A fundamental paper of Dahlberg \cite{BD:1977} treated the
Dirichlet problem. Jerison and Kenig \cite{JK:1982c} treated the
Neumann problem and provided a regularity result for the Dirichlet
problem.  Another boundary value problem of interest is the mixed
problem or Zaremba's problem  where we specify Dirichlet
data on part of the boundary and Neumann data on the remainder of the
boundary. 
To state this boundary value problem, we let $ \Omega$ be a bounded open set in $ \reals ^n$ and suppose
that we have written $ \partial \Omega = D\cup N$ where $D$ is an open
subset of the boundary and $ N =\partial \Omega \setminus D$.  
We consider the {\em $L^p$-mixed problem }  by which we mean the
boundary value problem
\begin{equation} \label{MP}
\left\{
\begin{array}{ll}
\Delta u = 0, \qquad & \mbox{in }  \Omega\\
u = f_D, \qquad &  \mbox{on }  D\\
\bigfrac { \partial u }{ \partial \nu} = f_N,\qquad  & \mbox{on } N \\
\nontan{(\nabla u ) } \in L^ p ( \partial \Omega).  
\end{array}
\right.
\end{equation}  
Here, we are using $ \nontan {(\nabla u)}$ to denote the non-tangential maximal
 function of  $\nabla u$ and the restriction  to the 
boundary  of $u$ and $\nabla u$ 
are defined using non-tangential limits. See section \ref{Definitions}
for details. 
  The normal derivative at the boundary $ \partial u
/\partial \nu$ is defined as $ \nabla u \cdot \nu$ where $\nu $ is the
outer unit normal defined a.e.~on the boundary. 
Our goals are to find conditions on $\Omega$, $N$ and $D$ which
allow us to show that   (\ref{MP}) has at most one solution and 
to find conditions on $ \Omega$, $N$,  $D$,  $f_N$ and $f_D$ which guarantee the existence of
solutions. 

The study of  the mixed problem in Lipschitz
domains is  listed as an open problem in  Kenig's CBMS lecture notes
\cite[Problem 3.2.15]{CK:1994}.  Recall that simple examples show that we cannot
expect to find solutions whose  gradient   lies in $L^2$ of the
boundary. For example, the function $ \Re \sqrt z$ on the upper
half-plane has zero Neumann data on the positive real axis and zero
Dirichlet data on the negative real axis but the gradient is not
locally square integrable on the boundary of the upper half-space.
This appears to present a technical problem as the standard
technique for studying boundary value problems has been the Rellich
identity which produces estimates in $L^2$.

In 1994, one of the authors 
observed that the Rellich identity could
be used to study the mixed problem in a restricted class of Lipschitz
domains \cite{RB:1994b}. Roughly speaking, this work requires that the
sets $N$ and $D$ meet at an angle less than $\pi$. Based on this work
and the methods used by Dahlberg and Kenig 
 to study the Neumann problem \cite{DK:1987}, J. Sykes
\cite{JS:1999,SB:2001} established results for the mixed problem in
a restricted class of Lipschitz graph domains.  I.~Mitrea and M.~Mitrea \cite{MM:2007} have
studied the mixed problem for the Laplacian with data taken from a
large family of function spaces, but with a restriction on the class
of domains.   Brown  and I.~Mitrea have studied the
mixed problem for 
the Lam\'e system \cite{MR2503013} and  Brown, I.~Mitrea,
M.~Mitrea and Wright have considered a  mixed problem for the Stokes system
\cite{MR2563727}.  More recently, Lanzani, Capogna and Brown
\cite{LCB:2008} used a variant of the Rellich identity
 to establish an estimate for the mixed
problem in two-dimensional graph domains when the data comes from
weighted $L^2$ spaces and the Lipschitz constant is less than one.
The present work also relies on weighted estimates, but uses a
simpler, more flexible approach that applies to all Lipschitz domains.

Several other authors have treated the mixed problem in various
settings.  Verchota and Venouziou \cite{MR2500502} treat a large class
of three dimensional polyhedral domains under the condition that the
Neumann and Dirichlet faces meet at an angle of less than $\pi$.
Maz'ya and Rossman \cite{MR:2005,MR:2007,MR:2006} have studied the
Stokes system in polyhedral domains.  Finally, we note that Savar\'e
\cite{GS:1997} has shown that on smooth domains, we may find solutions
in the Besov space $ B^ {2,\infty}_{ 3/2}$.  This result seems to be
very close to optimal. The example $ \Re \sqrt z$ described above
shows that we cannot hope to obtain an estimate in the Besov space $
B^ {2,2}_{3/2}$.

We outline the rest of the paper and describe the main tools of the
proof. Our first main result is an existence result for the mixed
problem when the Neumann data is an atom for a Hardy space. We begin
with the weak solution of the mixed problem and use Jerison and
Kenig's results for the Dirichlet problem and Neumann problem
\cite{JK:1982c} to obtain estimates for the gradient of the solution
on the interior of $D$ or $N$. This leads to a weighted estimate where
the weight is a power of the distance to the common boundary between
$D$ and $N$. The estimate involves a term in the interior of the
domain $\Omega$. We handle this term by showing that the gradient of a
weak solution lies in $L^ p (\Omega)$ for some $p>2$.  The
$L^p(\Omega)$ estimates for the gradient of a weak solution are proved
in section \ref{Reverse} using the reverse H\"older technique of
Gehring \cite{FG:1973} and Giaquinta and Modica \cite{MR549962}.
Using this weighted estimate for solutions of the mixed problem, we
obtain existence for solutions with Hardy space data by extending the
methods of Dahlberg and Kenig \cite{DK:1987}. Uniqueness of solutions
is proven in section \ref{Unique}.

With the Hardy space results in hand, we establish the existence of
solutions to the mixed problem when the Neumann data is in $L^p ( N)$
and the Dirichlet data is in the Sobolev space $ \sobolev 1 p
(D)$. This is done in sections \ref{BoundaryReverse} and
\ref{LpSection} by adapting the reverse H\"older technique used by
Shen to study boundary value problems for elliptic systems
\cite{ZS:2007}. The novel feature in our work is that we are able to
use the estimates in Hardy spaces proven in section \ref{Atoms},
whereas Shen's work begins with existence in $L^2$.

\section{Definitions and preliminaries} 
\label{Definitions}

We say that a bounded, connected open set $\Omega$ is a Lipschitz
domain if the boundary is locally the graph of a Lipschitz function.
To make this precise, for $M>0$, $ x\in \partial \Omega $ and $ r>0$,
we define a {\em coordinate cylinder} $\cyl x r $ to be $\cyl x r = \{
y : |y'-x'|< r , \ |y_n -x_n | < ( 1+M)r \}$.  We use coordinates
$(x', x_n ) = ( x_1, x'', x_n ) \in \reals \times \reals ^ { n- 2 }
\times \reals$ and assume that this coordinate system is a translation
and rotation of the standard coordinates.  We say that $ \Omega$ is a
{\em Lipschitz domain } if for each $x$ in $\partial \Omega$, we may
find a coordinate cylinder and a Lipschitz function $\phi : \reals ^ {
  n-1} \rightarrow \reals$ with Lipschitz constant $M$ so that
\begin{eqnarray*} 
\Omega \cap \cyl x r &  =&  \{ (y', y_n ) : y_ n > \phi (y') \} \cap
\cyl x r  \\
\partial \Omega \cap \cyl x r & = & 
 \{ (y', y_n ) : y_ n = \phi (y') \}\cap \cyl x r .  
\end{eqnarray*}

For a Lipschitz domain $ \Omega$, we define a {\em decomposition of the
boundary  for the mixed problem}, $\partial \Omega = D \cup N$,  as
follows.   We assume that $D$ is a relatively 
open subset of $ \partial \Omega$, $N= \partial \Omega \setminus D$
and let $\crease  $ be the boundary 
(relative to $\partial \Omega$) of $D$. For each $x$ in $ \crease$, we
require that a coordinate cylinder centered at $x$ have some
additional properties. 
We ask that there be a coordinate system $(x_1,
x'', x_n)$, a coordinate cylinder $\cyl x r $, a function $\phi$ as
above and also a Lipschitz function $\psi: \reals^{ n-2} \rightarrow \reals$
with  Lipschitz constant $M$  so that
\begin{eqnarray*} 
\cyl x r \cap D  & = &  \{ (y_1, y'', y_n ) : y_ 1 > \psi
(y''), \ y_n = \phi(y') \} \cap \cyl x r \\
\cyl x r \cap N  & = &  \{ (y_1, y'', y_n ) : y_ 1 \leq  \psi (y''),
\ y_n = \phi(y') \}  \cap \cyl x r . 
\end{eqnarray*}

We fix a covering of the boundary by coordinate cylinders $\{ \cyl
{x_i } { r _i } \}_{i=1} ^ L$ so that each $\cyl {x_i } { 100r _i } $
is also a coordinate cylinder. We assume that for each $i$, the
cylinder $\cyl {x_i}  { 100r_i}  \cap \partial \Omega \subset D$,
$\cyl {x_i }{
  100r_i}  \cap \partial \Omega \subset N $ or $\cyl {x_i} {100r_i}$
is one the coordinate cylinders from the definition of the boundary
decomposition for the mixed problem.  We let $r_0 = \min \{ r_i : i
=1, \dots ,L\}$ be the smallest radius in the collection.

We will call a Lipschitz domain $ \Omega$ and a decomposition of the
boundary $ \partial \Omega = N \cup D$ satisfying the above properties
a {\em standard domain for the mixed problem}. 

We will use $ \delta (y) = \dist (y, \crease)$ to denote the distance
from a point $y $ to $\crease$. 
We will let $ \ball x r = \{ y : |y-x| < r \}$ denote the standard
ball in $ \reals ^n$ and then $ \sball x r = \ball x r \cap \partial
\Omega$ will denote a {\em surface ball}. Throughout this
paper we will need to be careful of several points. The surface balls
may not be connected and we will  use the notation $ \sball x r $
where $ x$ may not be on  the boundary.  We  use $ \dball x
r $ to stand for $ \ball x r \cap \Omega$. 
Since $\crease$ is a Lipschitz graph, we may find  a constant $c =
c(n,M) >0$
so that we have the property 
\begin{equation} \label{SurfProp}
\mbox{If $ x\in \crease$ and $ 0<r< r_0$, then $\sigma ( \sball x {r}
  \cap D  )  > c  r^ { n-1}$. } 
\end{equation} 
Here and throughout this paper, we use $ \sigma$ for surface measure. 

Our main tool for estimating solutions will be the non-tangential
maximal function. We fix $ \alpha > 0$ and for  $ x\in \partial
\Omega$  we
define a {\em non-tangential approach region } by
$$
\ntar x = \{ y \in  \Omega : |x-y | \leq ( 1+ \alpha)  \dist (y, \partial
\Omega) \}. $$ 
Given a function $u$ defined on $ \Omega$, we define the 
{\em  non-tangential maximal function }  by 
$$
\nontan{u} (x) = \sup _{ y \in \ntar x } |u (y) |, \qquad x \in
\partial \Omega.
$$
It is well-known that for different values of $ \alpha$, the
non-tangential maximal functions have comparable $L^p$-norms. Thus,
the dependence on $\alpha $ is not important for our purposes and we
suppress the value of $\alpha $ in our notation. 
In (\ref{MP}), we define the restriction of $u$ and $ \nabla u$ to the
boundary using 
non-tangential limits. Thus, for a function $v$ defined in $ 
\Omega$ and $ x \in \partial \Omega$, we define 
$$
v(x) = \lim _{ \Gamma (x) \ni y \rightarrow x } v(y)
$$ 
provided the limit exists. It is well-known that if $v$ is harmonic in
a Lipschitz domain, then the non-tangential limits exist at almost
every point where the non-tangential maximal function is finite. In
addition, if the non-tangential maximal function of  $ \nabla u$ lies
in $L^p( \partial \Omega)$, then according to the argument in
\cite[Lemma 2.2]{RB:1995a},  as
corrected in Wright  \cite{MR2713677}, the non-tangential maximal
function of $u$ lies in an $L^p$-space and hence has non-tangential limits
a.e.. 

Many of our estimates will be of a local, scale invariant form and
hold on a scale $r$ that is less than $r_0$. The constants in these
local estimates will depend on the constant $M$, the dimension $n$,
and any $L^p$-indices that appear in the estimate.  If a constant
depends on  $M$, $n$, any $L^p$-indices and also depends on the collection
of coordinate cylinders which cover $ \partial \Omega$ and the
constant in the coercivity condition (\ref{coerce}), then we say that the constant depends on
the global character of $ \Omega$, $N$ and $D$.

We will use $L^p(E)$ to denote $L^p$-spaces. If $ E\subset \partial
\Omega$, then we use the $(n-1)$-dimensional measure on the boundary
to define the $L^p$-space. Otherwise, the $L^p$-norm is taken with
respect to $n$-dimensional Lebesgue measure. For $ \Omega$ an open
subset of $ \reals ^n$, $k=1,2,\dots$ and $ 1\leq p \leq \infty$, we
use $ \sobolev kp(\Omega)$ to denote the Sobolev space of functions
having $k$ derivatives in $L^p( \Omega) $.  We introduce notation for
the tangential gradient of a function defined on the boundary, $
\tangrad u$. If $u$ is a smooth function defined in a neighborhood of
$ \partial \Omega$, then we have that $\tangrad u = \nabla u - (\nabla
u \cdot \nu )\nu$. See \cite[p.~580]{GV:1984} for more details.  For 
 $D$ an open subset of $\partial \Omega$, we use $ \sobolev 1
p(D)$ to denote the Sobolev space of functions defined on $D$ and
having one derivative in $L^p(D)$. The norm in this space is given by $
\|f\|_{ \sobolev 1 p (D)}=  \| f\|_{ L^ p (D)}+\|\tangrad f \|_ { L^ p (D)}$.

Before stating the main theorem, we recall the definitions of atoms and
atomic Hardy spaces.   
We say that $a$ is an {\em atom  for the boundary }$ \partial \Omega$ if 
$a$ is supported in a surface ball $ \Delta _r (x)$ for some $x$ in
$ \partial \Omega$, $\|a\|_{L^\infty(\partial \Omega)}  \leq
1/\sigma(\Delta _r(x))$ and $ 
\int _{ \partial \Omega } a \, d\sigma = 0. $  

When we consider the mixed problem, we will want to consider atoms for
the subset $N$. We say that {\em   $ a$ is an atom for $N$}  if $a$ is the
restriction to $N$ of a function $\tilde a $ which is an atom for $ \partial
\Omega$. For   $ N $ a subset of  $ \partial \Omega$, the Hardy
space $ H^1( N)$ is the collection of 
functions $f$ which can be represented as $ \sum \lambda_j a_j$ where
each $a_j$ is an atom for $N$   and the coefficients
satisfy  $\sum | \lambda _j|< \infty $. This includes, of course, the
case where $N = \partial \Omega$ and then  we obtain the standard
definition.  It is easy to see that the Hardy space $H^1(N)$ is the
restriction to $N$ of elements of the Hardy space $ H^ 1 ( \partial
\Omega)$.

We give a similar definition for the Hardy-Sobolev space $H^ { 1,1}$. 
 We say that $ A$ is an {\em
  atom for $H^ { 1,1 } ( \partial \Omega)$   } 
if $A$ is supported in a surface ball $ \sball x r $ for some  $x \in
\partial \Omega$  and 
$\| \nabla _t A \| _ { L^ \infty ( \partial \Omega ) } \leq 1/\sigma (
\sball x r )$. We say that $A$ is an {\em atom for $H^ { 1 ,1 } (D) $
}  
  if $A$ is the restriction to $D$ of an atom   $ \tilde A$ for $ \partial
  \Omega$. Again, the space  $H^ { 1,1 }( D )$ is the
  collection generated by taking sums of  $H^ {1,1}(D) $ atoms with
  coefficients in $   \ell ^1$. 
See the article of Coifman and Weiss  \cite{CW:1976} for more information
about Hardy spaces.

We are now ready to state our main theorem. 

\begin{theorem} \label{main} 
Let $ \Omega$, $N$ and $D$ be a  standard  domain for the mixed problem. 

a) For $ p \geq 1$, the $L^p$-mixed problem has at most one solution.

b) If $f_N$ lies in $H^ 1(N)$ and $f _D $ lies in $H^{1,1}(D)$, the
$L^1$-mixed problem has a solution which satisfies the estimate 
$$
\| (\nabla u )^*\| _ { L^ 1 (\partial \Omega )} \leq C ( \| f_N\|_ { H
  ^ 1( N) } + \|f_D\| _ { H ^ { 1,1 } (D)}) . 
$$

c) There exists $q_0  > 1 $, depending only on $M$ and $n$ so that for
$p$ satisfying $1< p< q_0$, we have: 
If $f_N \in L^ p (N)$ and $ f_D \in \sobolev 1 p(D)$, then the $L^p$-mixed
problem  has a solution $u$ which satisfies 
$$
\| (\nabla u ) ^ * \| _ { L^ p ( \partial \Omega ) }
\leq C ( \| f_N \| _{L^ p(N) } + \|f_D \| _ { \sobolev 1 p (D)} ) .
$$

The constants in the estimates depend on the global character of the
domain and the index $p$. 
\end{theorem}

The rest of the paper is devoted to the proof of this theorem. We outline
 the main steps of the proof. 
\begin{proof}[Outline of the proof] 
We begin by recalling that for the  Dirichlet problem with
data from a Sobolev space, we  obtain non-tangential maximal
function estimates for the gradient of the solution. This is treated
for $p=2 $ by 
Jerison and Kenig \cite{JK:1982c}  and  for  $ 1< p <  2$
by Verchota \cite{GV:1982,GV:1984}.  The Hardy space problem was
studied by
Dahlberg and Kenig \cite{DK:1987} and by D.~Mitrea in two dimensions
\cite[Theorem 3.6]{MR1883390}. 
Using these results,  it suffices to
prove Theorem \ref{main} in the case when the Dirichlet data is zero.

The existence when the  Neumann data is taken from the atomic Hardy
space  and
the Dirichlet data is  zero is given in
Theorem \ref{HardyTheorem}. The existence for  $L^p$ data appears in section
\ref{LpSection}. It suffices to establish uniqueness when $p=1$ and
this is treated in Theorem \ref{uRuniq}.
\end{proof} 

\section{Higher integrability of the gradient of a weak solution} 
\label{Reverse}

It is well-known that one can obtain higher integrability of the
gradient of weak solutions of an elliptic equation. An early result of
this type is due to Meyers \cite{NM:1963}. Meyers's result has been
extended to the mixed problem by Gr\"oger \cite{MR990595}. However, we
choose to obtain our estimates using the reverse H\"older technique
introduced by Gehring \cite{FG:1973} and Giaquinta and Modica
\cite{MR549962} (we use the formulation from Giaquinta
\cite[p.~122]{MG:1983}).  This approach allows us to include non-zero
boundary data and obtain local, scale-invariant results.  At a few
points of the proof it will be simpler if we are working in a
coordinate cylinder $Z$ where we have that $ \partial \Omega\cap Z $
lies in a hyperplane. Thus, we will establish results for divergence
form elliptic operators with bounded measurable coefficients as this
class is preserved by a change of variable that will flatten part of
the boundary.

We will consider several formulations of the mixed problem. 
Our
goal is to obtain solutions whose  gradient lies in  $L^p( \partial
\Omega)$ for $p$ near 1. Our
argument begins with a weak solution whose 
gradient lies in $L^2 ( \Omega)$. We will show that under appropriate
assumptions on the data, this solution will have a gradient in $L^p (
\partial \Omega)$.

We describe a weak formulation of the mixed boundary value problem. 
  Some of the results of this section  will hold for solutions of
divergence form operators. Thus, we define weak solutions in this more
general setting.
For $D$ a subset of the boundary, we let
$W^{1,2}_D(\Omega)$ be the closure in $W^{1,2}(\Omega)$ of functions
in $ C_0^ \infty ( \reals ^ n )$  for which $ \supp u \cap \bar D =
\emptyset$. 
We let
 $ W_D^{1/2,2}(\partial \Omega) $ be the restrictions to $ \partial
\Omega$ of the space $ W^ {1,2}_D( \Omega)$. We define $W^{-1/2,2}_D(
\partial \Omega)$ to be the dual of $W^ { 1/2,2} _D( \partial
\Omega)$.  The Neumann data $f_N$ will be taken from the space
$W^{-1/2,2}_D(\partial \Omega)$.  If $A(x) $ is a symmetric matrix
with bounded, measurable entries and satisfies the ellipticity
condition $\ellipconst   |\xi |^2 \geq A(x) \xi \cdot \xi \geq
\ellipconst^{-1}  |\xi |^ 2
$ for some $  \ellipconst  >0$ and all $ \xi \in \reals ^ n$, we consider
the problem
\begin{equation}\label{WeakMix}
\left \{ 
\begin{array}{ll} 
\div A \nabla u = 0 , \qquad &  \mbox{in } \Omega \\
u =  0 , \qquad &  \mbox{on } D \\
A\nabla u \cdot \nu = f _N,  &  \mbox{on }N. 
\end{array}
\right. 
\end{equation}
We say that $u$ is a {\em weak solution }of this problem if $u \in W^
{ 1,2 } _D ( \Omega) $ and we have
$$
\int _ \Omega A  \nabla u \cdot \nabla v \, dy  =  \langle f _N ,
v \rangle _ { \partial \Omega}  , \qquad \mbox{for all } v \in W^ { 1,2 }_D ( \Omega).
$$
Here,  we are using $ \langle \cdot , \cdot \rangle_{ \partial \Omega}$
to denote the pairing between $ W^ { 1/2,2 } _D( \partial \Omega)$ and
the dual $ W^ {- 1/2,2 } _D( \partial \Omega)$. 
To establish existence of weak  solutions of the mixed problem, we
assume the  coercivity  condition 
\begin{equation} \label{coerce}
\|u\|_{ L^2 ( \Omega) } \leq c \|\nabla u \|_{ L^2( \Omega)}, \qquad u
\in W^ { 1 , 2 } _D ( \Omega) .
\end{equation}
Under this assumption, the existence and uniqueness of weak solutions
to (\ref{WeakMix}) is a consequence of the Lax-Milgram theorem.  It is
easy to see that (\ref{coerce}) holds when $ \Omega$, $N$ and $D$ is a
standard domain for the mixed problem.

If $f_N$ is a function on $N$, then we may identify $f_N$ with an
element of the space $W^{-1/2,2}_D( \partial \Omega)$ by
$$
\langle f_N , \phi \rangle_{ \partial \Omega} =  \int_{N } f_N \phi \,
d\sigma, \qquad \mbox{for all } \phi \in W^ { 1/2,2} _D( \partial \Omega).  
$$
From Sobolev embedding we have $ W^ { 1/2, 2}_D( \partial \Omega)
\subset L^p ( \partial \Omega)$, where $p = 2 ( n-1)/(n-2)$ if $n\geq
3$ or $ p < \infty$ when $n=2$. Thus the integral on the right-hand
side will be well-defined if we have $f_N$ in $L^ { 2(n-1)/n}(
N )$ when $n \geq 3 $ or $ L^ p ( N )$ for
any $ p > 1$ when $n =2$.  

\note
{

Outline of the proof of existence of weak solution of the mixed
problem (\ref{MP}). 

1. We assume that the Neumann data, $f_N$ lies in the dual of $W^ { 1/2,2}_D(
\partial \Omega)$ and the Dirichlet data $f_D$ lies in $W^ {1/2, 2} (
\partial \Omega)$ and thus is the restriction to $ \partial \Omega$ of
a function $ W^ { 1,2}( \Omega)$. 

2. We solve the Dirichlet problem 
$$\left\{ \begin{array}{ll} \Delta u = 0 , \qquad & \mbox{in }
  \Omega\\
u = f_D, \qquad & \mbox{on } \partial \Omega. 
\end{array}
\right.
$$

The solution satisfies $u - f_D$ lies in $ W^ { 1,2}_0 ( \Omega)$ and 
$$
\int _ \Omega \nabla u\cdot \nabla \phi \, dx = 0 , \qquad \phi \in W^{
  1,2}_0  (\Omega ) .
$$
By Dirichlet's principle, we have $ \|\nabla u \|_{L^2 ( \Omega)}\leq
  \|\nabla f_D\|_ {L^ 2 ( \Omega)}$. 

3. If $f$ is in $ W^ {1,2} ( \Omega)$ and $ \Delta f$ is in the dual
of $ W^ {1,2}( \Omega)$, then we may define the normal derivative of
$f$ at the boundary as an element of the dual of $ W^ { 1/2,2}(
\partial \Omega)$ by
$$
\langle \partial f/\partial \nu, \phi \rangle_{\partial \Omega}  = \int _ {\Omega} \nabla
  f \cdot \nabla \phi \, dx  + \langle \Delta f , \phi\rangle_\Omega,
  \qquad \phi \in W^ { 1,2}( \Omega).
$$
In particular, if $f$ is weakly harmonic, then we may define the
normal derivative. 

4. By point 2, we may assume that the Dirichlet data in (\ref{MP}) is
given as the boundary values of a harmonic function. 

We write the solution  of  (\ref{MP}) $ u = f_D+v$ where $f_D$ is a
harmonic representative of the boundary data in (\ref{MP}). We let $v$
be a solution of the mixed problem 
$$
\left\{ \begin{array}{ll} \Delta v = 0 , \qquad &  \mbox{in } \Omega\\
\partial v /\partial \nu = f_N - \partial f_D /\partial \nu,   \qquad & \mbox{on }
N\\
v = 0 , \qquad &  \mbox{on } D. 
\end{array}
\right.
$$
The weak formulation of this problem is 
$$
\int _ \Omega \nabla v \cdot \nabla \phi\,dx = \langle f_N,
\phi\rangle_{\partial \Omega} - \int_ \Omega \nabla \phi \cdot \nabla f_D\,dx, \qquad
\phi \in W^ { 1,2}_D ( \Omega) 
$$
where we have substituted $ \langle \partial f_D/\partial \nu , \phi
\rangle _{\partial \Omega} = \int _ \Omega \nabla f_D \cdot \nabla \phi \, dx.
$

The existence of $v$ in $W^ { 1,2}_D( \Omega) $ satisfying this weak
formulation follows from Lax-Milgram. It is clear that $u= f_D+v$ 
satisfies
$$
\int_\Omega \nabla u \cdot \nabla \phi \, dx = \langle f_N, \phi
\rangle_ {\partial \Omega},
\qquad \phi \in W^ { 1,2}_D ( \Omega) 
$$
and that we have $u-f_D\in W^ { 1,2}_D ( \Omega)$.

5. Uniqueness is easy. If $u$ is a weak solution of (\ref{MP}) with
zero data, then we  may  use  $u\in W^ {1,2}_D( \Omega)$ as the test
function in the weak  formulation to obtain 
$$
\int_ \Omega |\nabla u |^2 \, dx = 0.
$$
Since we assume (\ref{coerce}) for functions in $W^ { 1,2}_D(
\Omega)$, it follows that $u$ is zero. 
}
\note
{ 
We recall a result from Giaquinta \cite[p.~122]{MG:1983}. In this
result and throughout this paper, we use the notation  
 $ \average _A f\, dx$ to denote the average of a function $f$
on a set $A$. 

 Let $Q$ be a cube in $ \reals ^n$ and suppose that
  whenever $Q_{2r}(x)\subset Q$, we have 
$$
 \average  _{Q_r(x) } g^q \, dy   \
\leq A \left ( \average  _ {Q_ {2r}( x)} g \, dy \right ) ^ {
  q}   + \average _ {Q_{2r}(x) } f^ q \, dy . 
$$
Then there is an $\epsilon >0$ which depends on $A$, $q$ and $n$ so
that  for $ p \in [q,q+\epsilon)$, we have 
$$
\left ( \average_ {Q/2} g^p\, dy  \right ) ^ { 1/p} \leq C \left ( \average _ Q
  g^q \, dy \right ) ^ { 1/q} + \left( \average_ Q f^ p \, dy \right )^ {1/p}.
$$
In particular, if $f$ is in $L^ p(Q)$, then $g$ is in $L^ p_{loc}( Q)$. 
}

We define a sub-linear operator $P$ which takes functions on $\partial \Omega$
to functions in $ \Omega$ by 
$$
Pf(x) = \sup _{ s > 0 } \frac 1 {s^ { n-1} } \int _{ \sball x s  } |f
|\, d\sigma, \qquad x \in \Omega 
$$
and a local version of  $P$ by 
$$
P_r f(x) = \sup _{ r> s > 0 } \frac 1 {s^ { n-1} } \int _{ \sball x s
} |f |\, d\sigma , \qquad \qquad x \in \Omega  . 
$$
On the boundary, we have that $Pf$ is the
Hardy-Littlewood maximal function 
$$
Mf(x)= Pf(x) = \sup _{ s> 0}   \frac 1 {s ^ {n-1}} \int _ { \sball x s} |f |\,
d\sigma, \qquad x\in \partial \Omega . 
$$
The following result is probably well-known, but we could not find a
reference.

\begin{lemma} 
\label{PEstimate}
For  $1< p < \infty$, $ 1 \leq q \leq pn/( n -1) $, $ x \in \partial \Omega
$ and  $r < r_0$, we have 
\begin{equation}\label{Plocal} 
\left ( \average _{\dball x r  }  |P_rf|^ q \, dy \right ) ^ { 1/q } 
\leq C \left  (\frac 1 { r^ { n-1} } \int _{ \sball x { 2r} } |f |^ p \, d\sigma \right
) ^ { 1/p} . 
\end{equation} 
The constant in this  estimate depends only on the Lipschitz
constant $M$ and the dimension.  
\end{lemma}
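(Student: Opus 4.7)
The plan is to reduce the estimate to two pointwise bounds on $P_r f(y)$ and then integrate carefully over the interior.

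First I would localize. For $y \in \dball x r$ and $s < r$ the ball $\ball y s \subset \ball x {2r}$, so $\sball y s \subset \sball x {2r}$ and $P_r f(y) = P_r g(y)$, where $g \assign f \chi_{\sball x {2r}}$; I may therefore assume $f$ is supported in $\sball x {2r}$. Now fix $y \in \dball x r$ and let $y^* \in \partial\Omega$ satisfy $|y - y^*| = \delta(y) =: d$. Since $\sball y s = \emptyset$ for $s < d$ and $\sball y s \subset \sball{y^*}{2s}$ for $s \geq d$, I obtain
\begin{equation*}
P_r f(y) \leq C\, Mf(y^*),
\end{equation*}
where $M$ is the Hardy--Littlewood maximal operator on $\partial\Omega$. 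Applying H\"older's inequality to the same average and using that $s^{-(n-1)/p}$ is decreasing in $s$ and $s \geq d$ gives the complementary estimate
\begin{equation*}
P_r f(y) \leq C\, \delta(y)^{-(n-1)/p}\, \|f\|_{L^p(\partial\Omega)}.
\end{equation*}

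The main integration step uses a transfer formula from interior to boundary. Using a local Lipschitz parametrization $y = (y', \phi(y') + t)$ with $\delta(y) \approx t$ and $y^*$ within distance $Ct$ of $(y', \phi(y'))$, a Fubini computation shows that for any nonnegative $h$ on $\partial\Omega$ and any $0 \leq \alpha < 1$,
\begin{equation*}
\int_{\dball x r} h(y^*)\, \delta(y)^{-\alpha}\, dy \leq C\, r^{1-\alpha} \int_{\sball x {Cr}} h\, d\sigma.
\end{equation*}
For $1 \leq q \leq p$, I would apply this with $\alpha = 0$ and $h = (Mf)^q$; combined with H\"older's inequality on $\sball x {Cr}$ (to pass from $q$ to $p$) and the $L^p$-boundedness of $M$, the required estimate follows, matching the stated power $r^{n - q(n-1)/p}$. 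For $p < q < pn/(n-1)$, I would use the Hedberg-style combination
\begin{equation*}
P_r f(y)^q \leq C\, (Mf(y^*))^p\, \bigl(\delta(y)^{-(n-1)/p} \|f\|_{L^p}\bigr)^{q-p},
\end{equation*}
integrate, and apply the transfer formula with $\alpha = (q-p)(n-1)/p$, which is strictly less than $1$ precisely on this range; the $L^p$-boundedness of $M$ then closes the argument.

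The main obstacle is the endpoint $q = pn/(n-1)$, where $\alpha = 1$ and the transfer formula's inner integral $\int_0^{Cr} t^{-1}\,dt$ diverges logarithmically, so the Hedberg splitting above fails by a log. To handle the endpoint I would observe that, for $s \geq d$,
\begin{equation*}
P_r f(y) \leq C \int_{\sball x {Cr}} \frac{|f(z)|}{|y-z|^{n-1}}\, d\sigma(z),
\end{equation*}
so $P_r f$ is controlled by a Riesz-type potential with respect to surface measure. An elementary distribution-function computation on the kernel $|y-z|^{-(n-1)}$ yields a weak-type $(1, n/(n-1))$ bound for this potential; Marcinkiewicz interpolation between this weak-type estimate and the sub-critical strong-type bounds of the previous paragraph then delivers the strong-type estimate at the endpoint $L^p \to L^{pn/(n-1)}$, completing the proof.
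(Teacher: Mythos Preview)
Your two pointwise bounds for $P_rf$---domination by the boundary maximal function at the nearest boundary point, and the decay $P_rf(y)\le C\delta(y)^{-(n-1)/p}\|f\|_{L^p}$---are exactly the paper's estimates (its (P1) and (P2), proved after flattening to a half-space). Your subcritical argument via the Hedberg splitting and the transfer formula is correct and is a pleasant alternative to what the paper does.

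The endpoint step, however, has a gap. Marcinkiewicz between weak $(1,n/(n-1))$ and any of your subcritical strong bounds $(p_1,q_1)$ with $q_1<p_1 n/(n-1)$ cannot reach strong $(p,pn/(n-1))$: the point $(1/p,(n-1)/(pn))$ lies on the ray $y=(n-1)x/n$, and the segment from $(1,(n-1)/n)$ to $(1/p_1,1/q_1)$ lies strictly above that ray for $1/p_1<x<1$, so at $x=1/p$ you only get $q<pn/(n-1)$. The Riesz-potential comparison gives you nothing further, since $I_1$ with kernel $|y-z|^{-(n-1)}$ from the $(n-1)$-dimensional boundary is not $L^\infty\to L^\infty$ (the kernel is not $\sigma$-integrable), so there is no second endpoint on that side.

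The fix is already in your hands: your two pointwise bounds give the weak-type $(p,pn/(n-1))$ estimate for $P_r$ directly for every $p>1$, by the layer-cake computation the paper uses. From $P_rf(y)\le C\delta(y)^{-(n-1)/p}\|f\|_p$ the superlevel set $\{P_rf>\lambda\}$ is contained in a boundary strip of width $c(\lambda/\|f\|_p)^{-p/(n-1)}$; integrating the weak-$(p,p)$ bound for $M$ over that strip yields $|\{P_rf>\lambda\}|\le C\|f\|_p^{pn/(n-1)}\lambda^{-pn/(n-1)}$. Since this holds for all $p>1$, Marcinkiewicz between two values $p_0<p<p_1$ gives the strong endpoint $L^p\to L^{pn/(n-1)}$. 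This is precisely the paper's route; your subcritical direct argument is then superfluous but not wrong.
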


\begin{proof} We begin by considering the case 
  where $ \Omega = \{ (y', y_n ) :  y _
  n > 0\}$ is a half-space.
We use coordinates $ y = (y',y _n )$ and  we claim that 
\begin{eqnarray} 
\label{P1} 
Pf(y', y_n  )  & \leq  & Mf(y',0) 
\\
\label{P2}  
Pf(y) & \leq & C\|f\|_{ L^ p ( \partial \Omega) }  y _n ^ {( 1-n ) /p}, \qquad y _ n > 0 .
\end{eqnarray} 
The  estimate (\ref{P1}) follows easily since $ \sball {(y', y_n )} s \subset
\sball {(y', 0)} s $. 
To establish the second estimate, 
we observe that if $ s < y _n $, then $\sball y s = \emptyset$ and
hence 
$$
Pf(y)  =  \sup _{ s \geq  y _n } \frac 1 { s^ { n-1}} \int _ { \sball y s
} |f| \, d\sigma 
\leq C y _n ^ { (1-n)/p} \| f\|_{ L ^ p ( \partial \Omega) } .
$$

We claim that we have the following weak-type estimate for $Pf$, 
\begin{equation}
\label{Pclaim}
|\{ x\in \Omega :  Pf(x)  > \lambda \}|  \leq C \| f\|_ { L^ p (\partial  \Omega ) } ^ p
\lambda ^ { -pn /(n-1) } , \qquad \lambda > 0 . 
\end{equation}
To prove (\ref{Pclaim}), we may assume $ \| f \| _{ L^ p ( \partial
  \Omega )} = 1$. With this normalization, the observation (\ref{P2}) implies
that $\{  y ' : Pf(y', y _n) > \lambda \} = \emptyset $ if $ y _n > c
\lambda ^ { -p /( n-1)}  $. Thus, we may use Fubini's theorem to write
\begin{eqnarray*} 
|\{ x \in \Omega  :  Pf(x)  > \lambda \} | &  = & \int _ 0 ^ { c \lambda ^ { -p/( n-1)}  }
\sigma ( \{ y ' :Pf(y', y _n ) > \lambda \}) \, dy _n  \\
&\leq  & 
C\int _0 
^ {c\lambda ^ { -p/ ( n-1 ) }}   \sigma ( \{ y ' : Mf ( y',0) >
c   \lambda  \}) \, dy _n \\
&  = & C \lambda ^ { -p n / ( n-1)}   
\end{eqnarray*}
where we used (\ref{P1}), the weak-type $(p,p)$ inequality for the
maximal operator on $\reals ^ { n-1}$ and our normalization of the
$L^p$-norm of $f$. 

From the weak-type estimate (\ref{Pclaim}) and the Marcinkiewicz
interpolation theorem we obtain  that there is a constant $C$ depending on
$p$ and $n$ so that for $p>1$, 
\begin{equation}\label{Pglobal} 
\| Pf\|_{ L^ {pn/(n-1)} ( \Omega)} \leq  C \| f \|_{L^p ( \reals ^ { n
    -1} )} .  
\end{equation}
To obtain the  estimate (\ref{Plocal}), we observe that if $ y \in \ball x r $
then $ \ball y r \subset \ball x {2r}$ and hence 
$$
P_r f( y) \leq P _r  ( \chi _ { \sball x { 2r} } f ) (y) ,\qquad  y \in
\ball x r .
$$ 
Thus in the case where $ \Omega$ is a half-space, the result
(\ref{Plocal}) follows from (\ref{Pglobal}) and H\"older's inequality.
 
Finally, to obtain the  local  result on a general Lipschitz domain, 
one may change variables so that the boundary is flat near $ \sball x
r$.  This introduces the dependence on the  constant $M$. 
\end{proof}

\note
{ 
In applying the change of variables, it is helpful to note that for a
bi-Lipschitz transformation $ \Phi$, we have 
$$
B_ { cr} ( \Phi(x) ) \subset \Phi( B_r ( x ))  \subset B_ { Cr} ( \Phi(x)
).
$$

Do we need to multiply the radius by  a constant in the statement?

Check changes to Lemma \ref{RHEstimate} now that we use
Lemma   \ref{MSIRHI}. 

Does Lemma \ref{MSIRHI} below  duplicate one  of the estimates used
  in the $H^1$ part of 
  the paper. 
} 

We recall several versions of the  Poincar\'e and Sobolev
inequalities.
\begin{lemma} \label{YAPI}
Let  $ \Omega $ be a convex  domain of diameter $d$. 
Suppose that $ S $ is a subset of $  \bar \Omega $  that satisfies: 
a)  for some $ r $ with $ 0 < r< d$ we have 
 $ \sigma ( S \cap \ball x r ) = r ^ { n-1} $
and b)  there is a constant $A$ so that  $ \sigma ( S \cap \ball x t ) \leq A t^ { n-1}$ for $ t >0$. 
Let $u$ be a function in $ W^ { 1,p } ( \Omega)$ and suppose that
$u$ vanishes on $S$. 
Then  for $  1<  p < n $, we have  a constant $C$ 
$$
\left ( \int _ \Omega |u|^ p \, dy \right ) ^ { 1/p } 
\leq \frac { C d^ n  } { |\Omega | ^ { 1/p'} } { r ^ {1 - n / p }
} \left ( \int _ { \Omega } |\nabla u | ^ p \, dy \right) ^ { 1/p } .
$$
The constant $C$  depends on $p$, the dimension $n$ and $A$. 
\end{lemma}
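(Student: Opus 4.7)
The inequality must carry the implicit hypothesis that $u$ vanishes on $S$ (in the trace sense), since otherwise a nonzero constant would be a counterexample; my plan proceeds under this assumption. The strategy combines the Poincar\'e--Wirtinger inequality on the convex domain $\Omega$ with a mean-value bound that uses the vanishing on $S$.

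First, Poincar\'e--Wirtinger for convex domains gives $\|u-u_\Omega\|_{L^p(\Omega)} \leq Cd\,\|\nabla u\|_{L^p(\Omega)}$, where $u_\Omega := |\Omega|^{-1}\int_\Omega u\,dy$. Using $|\Omega| \leq Cd^n$ and $r<d$, a short computation shows $Cd \leq C'd^n|\Omega|^{-1/p'}r^{1-n/p}$ when $1<p<n$, so this piece already sits inside the target bound, and it suffices to control $|u_\Omega|\cdot|\Omega|^{1/p}$. Since $u$ vanishes on $S$ and $\sigma(S \cap \ball x r) = r^{n-1}$,
$$
|u_\Omega| \leq \frac{1}{r^{n-1}} \int_{S \cap \ball x r} |u(z) - u_\Omega|\,d\sigma(z).
$$
Now I would couple this with the pointwise Poincar\'e representation on a convex domain (Gilbarg--Trudinger, Lemma 7.16),
$$
|u(z) - u_\Omega| \leq \frac{Cd^n}{|\Omega|} \int_\Omega \frac{|\nabla u(w)|}{|z-w|^{n-1}}\,dw,
$$
swap the order of integration, and apply H\"older's inequality in $p$. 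The proof then reduces to the $L^{p'}$-bound
$$
\|J\|_{L^{p'}(\Omega)} \leq C(M)\,r^{n/p'}, \qquad J(w) := \int_{S \cap \ball x r} \frac{d\sigma(z)}{|z-w|^{n-1}},
$$
which plugged into the chain yields $|u_\Omega| \leq (Cd^n/|\Omega|)\,r^{1-n/p}\|\nabla u\|_{L^p(\Omega)}$, exactly the needed estimate for the mean-value piece.

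The main obstacle is the $L^{p'}$-bound on $J$, because the Ahlfors upper bound is stated only for balls centered at $x$ while the kernel of $J$ is singular at arbitrary $w \in \Omega$. I would split according to whether $|w-x| \geq 2r$ or $|w-x| < 2r$. In the far regime, $|z-w| \geq |w-x|/2$ for $z \in \ball x r$ yields the trivial pointwise bound $J(w) \leq 2^{n-1}Mr^{n-1}|w-x|^{1-n}$, whose $L^{p'}$-norm over $\{|w-x| \geq 2r\}$ is of order $r^{n/p'}$ precisely because the constraint $p<n$ forces $p'(n-1) > n$. In the close-in regime a naive application of Minkowski's inequality diverges at $z$, so instead I would use a layer-cake decomposition coupled with the inclusion $S \cap \ball w t \subset S \cap \ball x {t+|w-x|}$ and the upper Ahlfors bound at $x$; this yields roughly $J(w) \lesssim 1 + \log(r/\dist(w,S))$, whose $L^{p'}$-norm over $\ball{x}{2r}\cap \Omega$ is also of order $r^{n/p'}$, completing the reduction.
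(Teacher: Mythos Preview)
Your reduction is essentially the paper's: both arguments amount to the pointwise convex Poincar\'e representation plus the bound $\|J\|_{L^{p'}}\le C(M)\,r^{n/p'}$ with $J=I_1(\sigma|_{S\cap B_r(x)})$ (the paper writes this as $\|I_1\mu\|_{L^{p'}}$ for the normalized measure $\mu=r^{1-n}\sigma$). Your Poincar\'e--Wirtinger step and far-regime computation are fine.

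The near-regime argument, however, has a real gap. First, the inclusion $S\cap B_t(w)\subset S\cap B_{t+|w-x|}(x)$ together with the Ahlfors bound \emph{only at the center $x$} does not give $J(w)\lesssim 1+\log(r/\delta)$: when $\delta=\dist(w,S)<|w-x|$, the layer-cake integral picks up
\[
\int_{\delta}^{|w-x|} t^{-n}\,M\bigl(t+|w-x|\bigr)^{n-1}\,dt \;\sim\; \Bigl(\tfrac{|w-x|}{\delta}\Bigr)^{n-1},
\]
so the bound you obtain is $J(w)\lesssim (|w-x|/\delta)^{n-1}+\log(r/|w-x|)$, not $1+\log(r/\delta)$. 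Second, even if you assume the upper-Ahlfors condition at \emph{every} center (which is what the application actually provides, and which does give the log bound), the step ``$\|1+\log(r/\delta)\|_{L^{p'}(B_{2r})}\lesssim r^{n/p'}$'' needs a Minkowski-content estimate $|\{w\in B_{2r}:\delta(w)<s\}|\lesssim s\,r^{n-1}$ that does \emph{not} follow from an upper-Ahlfors hypothesis alone. For instance, with $n=2$ take $\sigma=r\,\mathcal L^2|_{[0,1]^2}$: then $\sigma(B(z,t))=r\pi t^2\le (r\pi)\,t$, so upper-Ahlfors holds, yet $\delta\equiv 0$ on $[0,1]^2$ and the log-integrand is $+\infty$ there, while in fact $J\sim r$ is perfectly finite and $\|J\|_{L^{p'}}\sim r\le r^{2/p'}$.

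The paper bypasses both issues by invoking the Wolff-potential inequality (Adams--Hedberg, Theorem~4.5.4),
\[
\int_{\reals^n}\bigl(I_1\mu\bigr)^{p'}\,dy \;\le\; C\int \dot W^{\mu}_{1,p}\,d\mu,
\qquad \dot W^{\mu}_{1,p}(z)=\int_0^\infty\bigl(\mu(B(z,t))\,t^{p-n}\bigr)^{1/(p-1)}\,\frac{dt}{t},
\]
and then checking $\dot W^{\mu}_{1,p}(z)\le C(M)\,r^{(p-n)/(p-1)}$ directly from the growth condition $\mu(B(z,t))\le M(t/r)^{n-1}$; no Minkowski-content estimate is needed. (Note that this, like your log bound, implicitly uses the Ahlfors condition at every $z\in S$, which is the hypothesis that actually holds in the application.) The cleanest repair of your argument is to replace the log-pointwise step by this Wolff estimate, or equivalently by Adams' trace inequality $I_1:L^p(\reals^n)\to L^{p(n-1)/(n-p)}(d\sigma)$, which by duality gives $\|J\|_{L^{p'}}\le C(M)\,r^{n/p'}$ in one stroke.
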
 
\begin{proof}
It suffices to consider functions $u$ which are smooth in $ \bar
\Omega$  and vanish on
$S$. 
 We follow the proof of Corollary 8.2.2 in the book of
Adams and Hedberg \cite{MR1411441}, except that we  substitute
the Riesz capacity for the Bessel capacity in order to obtain a
scale-invariant estimate.  Following their arguments, we obtain that
if $u$ vanishes on $S$, then
\begin{equation} 
\label{Fractional}
|u(x) | \leq \frac { d^ n }{ |\Omega| } ( I_ 1 ( |\nabla u | ) (x) 
+ \| \nabla u \|_{ L^ p ( \Omega) } \| I _ 1 ( \mu ) \| _ { L^ { p ' }
  ( \Omega ) } ) .
\end{equation} 
Here $ I _ 1( f ) ( x) = \int _ { \Omega } f(y) | x-y |^ { 1-n } \,
dy$ is the first-order fractional integral and $\mu $ is any
non-negative  measure
on $ S$ normalized so that $\mu(S) =1$.
To estimate $ \| I _1( \mu)\|_ {L^ { p'} ( \Omega)} $ we use Theorem
4.5.4 of Adams and Hedberg \cite{MR1411441} which gives that 
$$
\int _ { \reals ^ n } ( I _ 1( \mu ) ) ^ { p ' } \, dy \leq
C \int_{ \reals ^ n }  \dot W ^ \mu _ { 1, p } \, d\mu 
$$
where $\dot W ^ \mu _ { 1, p } (x) $ is the Wolff potential of $\mu$
defined by 
$$
\dot W ^ \mu _ { 1,p}(x) = \int _ 0 ^ \infty ( \mu ( \ball x t ) t ^ { p
  -n } ) ^ { 1/ ( p-1)} \, dt/t.
$$
Our assumptions imply that  with $ \mu = r ^ { 1-n } \sigma$ denoting
normalized surface measure on $S$, we have  $ I _ 1 ( \mu) (x) \leq C
r^ { ( p-n ) /( p-1)}$ where $C$ depends only on $A$. Using this
estimate for the Wolff potential and Young's convolution inequality to
estimate $I_1(|\nabla u|)$, the 
Lemma follows from (\ref{Fractional}). 
\end{proof} 

The next  inequality is also taken from Adams and Hedberg 
\cite[Corollary 8.1.4]{MR1411441}. Let $1/q + 1/n < 1 $ and  assume that
$ \Omega$ is a convex domain of diameter $d$. We let $ \bar u =
\average _\Omega u \, dy$  and then we may find  a constant $C =
C_{q,n}$ depending only on $q$ and $n$ so that  
\begin{equation}\label{SoPo1} 
\int _ { \Omega  } |u -\bar u | ^ q  \, dy 
\leq C\frac { d^ n } { |\Omega |} \left ( \int _  { \Omega  }
|\nabla u |^ {nq/(n+q)}\, dy  \right) ^ { ( n+q)/n} . 
\end{equation} 

Finally, we suppose that $\Omega$ is a domain and $ \dball x r$ lies
in a coordinate cylinder  $Z$ so that $ \partial \Omega  \cap Z$ lies
in a hyperplane and let $\bar u = \average _{\dball x r}
u\,dy$. Provided $ \dball x r \subset Z$, we have  
\begin{equation}\label{SoPo2} 
\left( \int _{ \sball x {r} } | u -\bar u | ^ { q} \, d\sigma
\right) ^ { 1/q} \leq C \left( \int_{ \dball x { r}}|\nabla u |^ p \,
dy \right ) ^ { 1/p }. 
\end{equation} 
In the inequality (\ref{SoPo2}),  $p$ and $q$  are related by $ 1/q  =  1/p - ( 1-
1/p) / (n-1) $ and $ p > 1$.

\begin{lemma} \label{MSIRHI} 
Let $ \Omega $, $N$ and $D$ be a standard domain for the mixed
problem.  Suppose that (\ref{SurfProp}) holds, let $ x \in \Omega$ and
$ 0 < r < r_0$.  Let $u$ be a weak solution of the mixed problem for a
divergence form elliptic operator with zero Dirichlet data and Neumann
data $f_N$. We have the estimate
$$
\left ( \average _{ \dball x  { r} }  |\nabla u | ^ 2 \, dy \right ) ^
  { 1/2 }  \leq C \left [ 
\average _ {\dball x  {2r} } |\nabla u | \, dy 
+\left (  \frac 1 { r^ { n-1}} \int  _ {N \cap  \sball x {2r} }  |f_N|^ { p} \, d\sigma \right) ^ { 1/p}
\right  ] . 
$$ 
Here, $p=2$ if $n  =  2$ and $ p = 2( n-1)/(n-2)$ for $n\geq 3$.
The constant $C$ depends only on $M$ and the dimension $n$.  
\end{lemma}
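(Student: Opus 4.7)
The proof proceeds by the classical Caccioppoli plus Sobolev-Poincar\'e recipe, adapted to the mixed boundary data. First I would fix a smooth cutoff $\eta$ equal to $1$ on $\dball x r$, supported in a slightly larger concentric ball, with $|\nabla\eta|\leq C/r$. I would distinguish two geometric cases. When $\sball x{Cr}$ meets $D$ in a set of surface measure at least $cr^{n-1}$ -- which by property (\ref{SurfProp}) covers $x\in\crease$ as well as $x\in D$ -- I would take $c=0$, so that $u-c=u$ vanishes on $D\cap\sball x{Cr}$. When $\sball x{Cr}\subset N$, I would instead take $c=\average_{\dball x{Cr}} u$ and require the cutoff to vanish near $D$. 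In either case $\eta^2(u-c)\in W^{1,2}_D(\Omega)$ is an admissible test function.

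Substituting $v=\eta^2(u-c)$ into the weak formulation $\int_\Omega A\nabla u\cdot\nabla v\,dy=\int_N f_N v\,d\sigma$ and applying ellipticity together with Young's inequality yields the Caccioppoli-type bound
\begin{equation*}
\int_{\dball x r}|\nabla u|^2\,dy\leq \frac{C}{r^2}\int_{\dball x{Cr}}(u-c)^2\,dy+C\Bigl|\int_{\sball x{Cr}}f_N\,\eta^2(u-c)\,d\sigma\Bigr|.
\end{equation*}

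The first right-hand term I would estimate by Sobolev-Poincar\'e: in the mixed case, invoke (\ref{YAPI}) with $S=D\cap\sball x{Cr}$ (whose surface measure is $\geq cr^{n-1}$ by (\ref{SurfProp})); in the pure Neumann case, invoke (\ref{SoPo1}) with $\bar u=\average_{\dball x{Cr}} u$. Either way $\int (u-c)^2$ is controlled by a scale-invariant multiple of $r^2$ times an average of $|\nabla u|^q$ on $\dball x{Cr}$ with $q<2$, which after dividing by $|\dball x r|\sim r^n$ yields a term of the required $\average|\nabla u|$ form on the right. For the boundary term I would apply H\"older's inequality with exponents $p$ and $p'$,
\begin{equation*}
\Bigl|\int_{\sball x{Cr}}f_N\,\eta^2(u-c)\,d\sigma\Bigr|\leq\Bigl(\int_{\sball x{Cr}}|f_N|^p\,d\sigma\Bigr)^{1/p}\Bigl(\int_{\sball x{Cr}}|u-c|^{p'}\,d\sigma\Bigr)^{1/p'},
\end{equation*}
and then use the boundary Sobolev-Poincar\'e inequality (\ref{SoPo2}) to dominate the $L^{p'}(\sball x{Cr})$ norm of $u-c$ by an interior $L^q$ norm of $\nabla u$. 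A Young splitting reabsorbs the gradient factor into the left side of Caccioppoli, leaving the $L^p$ norm of $f_N$ with the correct scaling $r^{-(n-1)/p}$.

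The main obstacle is exponent bookkeeping: the trace exponent $p'$ used on the boundary must match the Sobolev-Poincar\'e exponent $q<2$ used in the interior, and the factors of $r$ from Poincar\'e must precisely cancel the factor $1/r$ from $|\nabla\eta|$ so that the final inequality is dimensionally correct and scale invariant. A secondary complication is verifying that the case split on the geometry of $\sball x{Cr}$ relative to $\crease$ covers every position of $x$ on $\partial\Omega$, and that the constants depend only on $M$ and the dimension.
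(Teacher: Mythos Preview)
Your overall strategy---cutoff, case split on whether the ball meets $D$, test with $\eta^2(u-c)$, then Caccioppoli plus Sobolev--Poincar\'e---is exactly the paper's. But two steps are not right as written, and both stem from trying to do everything in a single pass.

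First, the Sobolev--Poincar\'e inequality (\ref{SoPo1}) applied to $\int(u-c)^2$ produces $\bigl(\int|\nabla u|^{2n/(n+2)}\bigr)^{(n+2)/n}$, i.e.\ an $L^q$ average with $q=2n/(n+2)>1$, not the $L^1$ average $\average|\nabla u|$ that the lemma demands. The phrase ``yields a term of the required $\average|\nabla u|$ form'' conflates $q<2$ with $q=1$; these are different, and the $L^1$ version is what Giaquinta's reverse H\"older lemma (as quoted in the paper) takes as input. Second, your Young ``reabsorption'' of the boundary term cannot work: after H\"older and the trace inequality (\ref{SoPo2}) you obtain a factor of $\|\nabla u\|$ over the \emph{larger} region $\dball x{Cr}$, while your left-hand side is $\int_{\dball x r}|\nabla u|^2$ over the smaller one. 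There is nothing to absorb into.

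The paper fixes both problems at once by introducing two radii $1/2\le s\le t\le1$, proving instead
\[
\Bigl(\int_{\dball x s}|\nabla u|^2\Bigr)^{1/2}\le \frac{C}{(t-s)^a}\Bigl(\int_{\dball x t}|\nabla u|^{q}\Bigr)^{1/q}+\Bigl(\int_{\sball x 1}|f_N|^p\Bigr)^{1/p},\qquad q=\tfrac{2n}{n+2},
\]
and then iterating \`a la Dahlberg--Kenig/Fabes--Stroock to pass from $q=2n/(n+2)$ down to $q=1$. A smaller point: your case split (``$\sball x{Cr}$ meets $D$ in measure $\ge cr^{n-1}$'' versus ``$\sball x{Cr}\subset N$'') is not exhaustive---the ball can graze $D$ in tiny measure---whereas the paper's split (whether $\ball x{s+\epsilon}\cap D$ is empty) is, and then uses (\ref{SurfProp}) at the scale $\epsilon=(t-s)/2$ to locate a nearby point where $D$ has large density for the Poincar\'e step.
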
 

\begin{proof} 
Changing variables to flatten the boundary of a Lipschitz domain
preserves the class of elliptic operators with bounded measurable
coefficients, thus it suffices to consider the case where the ball
$\sball x r $ lies in a hyperplane.  We may rescale to set $ r = 1$.
We claim that we can find an exponent $a$ so that  for $s$ and $t$
which satisfy  $ 1/2\leq s < t \leq 1$, we have
\begin{eqnarray}
\lefteqn{ \left ( \int _ { \dball x s } |\nabla u |^ 2 \, dy \right )
  ^ { 1/2} 
} 
\nonumber \\
& \leq &  \frac C { ( t-s ) ^ a } \left ( \int _ { \dball x t } | \nabla u
| ^ q \, dy \right ) ^ { 1/ q}  + 
\left (  \int _ {N \cap \sball x 1 } | f_N|^ { p} \, d\sigma  \right)
^ { 1/p}
\label{pqClaim} 
\end{eqnarray}
where we may choose the exponents $ p = 2 ( n-1)/( n-2) $ and $q =
2n/(2n+2)$ if $ n \geq 3$ or $ p =2$ and  $ q = 4/3$ if $n  = 2$. 

We give the details when $ n \geq 3$.  In the argument that follows,
let $ \epsilon = (t-s)/2$ and choose $ \eta$ to be a cut-off function
which is one on $\ball x s$, supported in $ \ball x { s+ \epsilon}$
and satisfies $ |\nabla \eta | \leq C /\epsilon $. We let $ v = \eta ^
2 ( u -E) $ where $E$ is a constant. If we choose $E$ so that $ v \in
W^ { 1,2 } _ D ( \Omega) $, the weak formulation of the mixed problem
and H\"older's inequality gives for $ 1 < p < \infty $
\begin{eqnarray} 
\int _ \Omega | \nabla u |^ 2 \eta ^ 2 \, dy & \leq & C \left [ \int _
  \Omega | u - E |^ 2 |\nabla \eta | ^ 2 \, dy + \left ( \int _{ N\cap
    \sball x { s+ \epsilon }} | u - E| ^ { p'}\, d\sigma \right ) ^ {
    2/ p ' } \right .  \nonumber 
\\ 
& & \qquad \left.  + \left( \int
  _{N \cap \sball x { s+ \epsilon } } |f_N|^ p \, d\sigma \right ) ^ {
    2/ p }\right] .
\label{John}
\end{eqnarray}

We consider two cases: a) $ \ball x { s+ \epsilon } \cap D = \emptyset
$ and b) $\ball x { s+ \epsilon } \cap D \neq \emptyset$.  In case a)
we may chooose $ E = \bar u = \average _ { \dball x { s+ \epsilon } }
u \, dy $.  We use the Poincar\'e-Sobolev inequality (\ref{SoPo1}) and
the inequality (\ref{SoPo2}) to estimate the first two terms on the
right of (\ref{John}) and conclude that
\begin{eqnarray*} 
\lefteqn{
\int _ { \dball x s } |\nabla u |^ 2 \, dy  
}\\
& \leq &  C \left [ \frac 1 { ( t-s) ^ 2 } \left ( \int _ { \dball x { s+
      \epsilon } }  |\nabla u | ^ { \frac {2n} {n+2} } \, dy \right )
  ^ { \frac { n+2 }  n }   \right .
\\
& & \quad + \left . \left ( \int _ { \dball x { s+ \epsilon } }
  |\nabla u | ^ 
{ \frac { np} { np-n + 1 } } \, dy \right ) ^ { \frac { 2 ( np -n + 1)} {  pn }  }   + 
\left (  \int _ { N \cap \sball x  1 }  |f_N|^ { p} d\sigma  \right) ^
      {\frac 1 p }
  \right ]. 
\end{eqnarray*}
If $n  \geq 3$, we may choose $p = 2 ( n-1) / ( n-2) $ and then we
have that $ np/( np -n + 1) = 2n /(n+2)$ to obtain the claim.

We now turn to case b).  Since $ \ball x { s+ \epsilon}$ meets the set
$D$, we cannot subtract a constant from $u$ and remain in the space of
test functions, $ W^ { 1,2 } _D ( \Omega)$. Thus, we  let $E=0 $ in 
(\ref{John}). 
 We let $ \bar u $ be the average value of $u$ on $ \dball x
{s+ 2\epsilon} $ and obtain 
$$
\int _ { \dball x { s+ \epsilon } } u ^ 2  |\nabla \eta |^ 2  \, dy 
\leq \frac C {  \epsilon  ^ 2} \left [ \int _ { \dball x  { s+ 2\epsilon }  } |
  u - \bar u | ^ 2 \, dy 
+  \bar u ^ 2 \right ] .
$$ 
Since $ \ball x { s+ \epsilon } \cap D \neq \emptyset $, our
assumption (\ref{SurfProp}) on the set $D$  implies that we may find a
point $ \tilde x \in \Lambda $ so that $\ball { \tilde x } \epsilon
\subset \ball x t $ and so that $ \sigma ( \ball { \tilde x } \epsilon
\cap D ) \geq c \epsilon ^ { n -1} $. As $c$ depends on $M$ our final
constant may be taken to depend on $M$.  Using (\ref{SoPo1}) and the
Poincar\'e inequality of Lemma \ref{YAPI} we conclude that
\begin{eqnarray}
\int _ { \dball x { s+ \epsilon }} u^2 |\nabla \eta | ^ 2  \, dy 
&\leq & C \left [ \frac 1 { \epsilon  ^ 2 } \left ( \int _ { \dball x
    {s+2\epsilon }  } |
  \nabla u | ^ { 2n / ( n+2) } \, dy \right ) ^ { ( n + 2) /n }
  \right.  \nonumber \\
\label{Paul}
& & \left.  \qquad 
+ \frac 1  {\epsilon^{  2n/q}} \left ( \int _ { \dball x { s + 2\epsilon } } 
  |\nabla u | ^ q \, dy \right ) ^ { 2/ q} \right]  
\end{eqnarray}
for $ 1 < q < n $. 
A similar argument using (\ref{SoPo2}) and Lemma \ref{YAPI} gives us 
\begin{eqnarray} 
\nonumber
\left( \int _ { \sball x { s+ 2\epsilon } } | u |^ { p'} \, d\sigma \right
) ^ { 1/ p' } 
& \leq &  
\left ( \int _ { \sball x { s + 2\epsilon }}  | u -\bar u | ^ { p
    ' } \, d\sigma   \right ) ^ { 1/ p' } + | \bar u | 
 \\ 
\nonumber
& \leq  & 
C \left [ \left ( \int _ { \dball x { s+ 2\epsilon } } |\nabla u | ^
  { np / ( np - n + 1)  } \, dy \right ) ^ {  ( np -n + 1 ) / ( np) }  \right.
\\
\label{George}
& &\left. \quad +  \epsilon ^ { 1- n/q} \left ( \int _ { \dball x { s+ 2\epsilon }
} |\nabla u | ^ q \, dy \right ) ^ { 1/ q}  \right ]  
\end{eqnarray}
where the  use of Lemma  \ref{YAPI} requires that we have   $ 1<
q< n$.
We use  (\ref{Paul}) and (\ref{George}) in (\ref{John}) and 
choose 
$ q = 2n/( n+2) $  and $ p = 2 ( n -2 ) /( n-1)
$ if $ n \geq 3$. 
Once we recall that $ t -s = 2
\epsilon $, we obtain (\ref{pqClaim}).   

Finally, we may use the techniques  given in \cite[pp.~80-82]{MR1239172}
or \cite[pp.~1004--1005]{FS:1984}
to see  that the claim (\ref{pqClaim}) implies the
estimate
\begin{equation} 
\left ( \int _ { \dball x {1/2}  } |\nabla u |^ 2 \, dy \right ) ^ { 1/2} 
\leq  C \left[  \int _ { \dball x 1 } | \nabla u
|   \, dy    +
\left (  \int _ {N \cap  \sball x 1 }  |f_N|^ { p}  d\sigma \right) ^ { 1/p}
  \right]
\end{equation} 
with $p$ as in (\ref{pqClaim}).  

When the dimension $n=2$, the exponent $2n/(n+2)$ is 1 and it is not
clear that we have  (\ref{SoPo1}) as used  to
obtain (\ref{Paul}).   However, from (\ref{SoPo1}) and H\"older's
inequality we can show 
$$
\left( \int_{ \dball x { s+ 2\epsilon } } |u - \bar u | ^ 2 \, dy
\right ) ^ { 1/2} \leq C \left( \int_{ \dball x { s+ 2\epsilon } }
|\nabla u | ^ {4/3}  \, dy
\right ) ^ { 3/4} .
$$
This  may be substituted for (\ref{SoPo1})  in the above argument to obtain
(\ref{pqClaim}) when $ n =2$. 
\end{proof}
\note
{
The argument of Fabes and Stroock will give any $p$, not just $p=1$ on
the right-hand side. 

\smallskip

We give  proofs of (\ref{SoPo1}) and (\ref{SoPo2}).  Oops, this is a
proof of a version of (\ref{SoPo2}) that we are no longer using. 

 If  $u$ is in $W^ {1,p} ( \Omega)$ and $ u =0$ on a
  subset $S\subset \sball x r $  with $\sigma(S) > cr^ { n-1}$ and $
  r< r_0$, then we
  have that
$$
\int _{ \sball x r }  u^p \, d\sigma  \leq \frac {Cr^ { n+p}}
     {\sigma(S)} \int _ {\Omega \cap \ball x {Cr}} |\nabla u | ^ p \, dy.
$$

\begin{proof}
1. We first consider the case where $ \Omega = \{ x: x_n > 0\}$
and suppose that our ball, $B= B_1(0) $,  is centered at  the origin. 
We let $ \bar u = \average _{ B_r^ +} u dy$, extend $u-\bar u$ to a
function $E(u-\bar u) $ on $ 
\reals ^ n _+$ by reflecting in the ball $ |x|=1$ and multiplying  by a
cut-off function $ \eta$ which is  one on $B$ and supported in $2B$. 
Let $ v=\eta E(u-\bar u)$ denote the resulting function.

2. According to Runst and Sickel \cite{MR98a:47071}, we have the trace theorem
$$
\|v\|_{ B^ {p,p}_{ 1-1/p}} \leq \|v\|_{ W^ { 1,p}( \reals ^ n _ +)}. 
$$

3. Using Poincar\'e inequalities and properties of the extension operator, we
can show 
$$
\|v\|_ {W^ { 1,p}(\reals^n_+)} \leq \|\nabla u \|_ { L^p( B_+)}.
$$

4. Recalling the definition of the Besov norm and 
that $u=0$ on $S$, we have 
\begin{eqnarray*} 
\frac { \sigma ( S) } { 2^{n-2+p}} \int _\Delta |u|^p \, d\sigma  
& \leq  & \int_ \Delta \int _ \Delta \frac { |u(x',0) - u(y',0) | ^ p }{ |x'-y'| ^
  { n-2 +p} } \, d\sigma d\sigma  \\
 &  \leq  & 
 \int_ {\reals ^ { n-1} } \int _ {\reals ^ { n-1} } \frac { |v(x',0) - v(y',0) | ^ p }{ |x'-y'| ^
  { n-2   +p } }  \, d\sigma d\sigma  \\
&  \leq  & \|v\|^p_ {W^ { 1,p}(\reals^n_+)} 
\end{eqnarray*} 
where we use $ \Delta $ to denote the ball $ \{ x': |x'|<1\}$. 
This uses that $v(x',0) - v(y',0)= u(x',0)-u(y',0)$ for $x', y' \in
\Delta$.  

The inequalities in 3 and 4 give the result when $ \Omega $ is a
half-space and $r=1$. Rescaling, gives the result for general $r$. For
a general Lipschitz domain, we may change variables to reduce to the
problem in a half space. The image of $ \sball x r$ will be contained
in a ball of radius $\sqrt{1+M^2}r$ where $M$ depends on the Lipschitz
constant.  
\end{proof}

If $ \bar u = \average _{ \Omega \cap \ball x r  } u \, dy $ or if
$u$ vanishes on $ S\subset \partial \Omega \cap \sball x r$ and $
\sigma (S) > cr^ { n-1}$, then we have
$$
\left( \int _ { \partial \Omega \cap \sball x r } |u- \bar u | ^ 
{(n-1)p/(n-p) }  \, d\sigma \right) ^ {\frac {n-p} {(n-1)p} } \leq
C\left( \int _ { \Omega \cap \ball x {Cr} } |\nabla u |^ p \,
dy\right) ^ { 1/p} .
$$

\begin{proof}[Proof of SoPo2]
1. We change variables to reduce to the case of a half-space and then
rescale to obtain a radius of 1.  Note
that the change of variables, $ (x',x_n) \rightarrow ( x', \phi(x') +
x_n)$ has Jacobian 1, so that this preserves mean value zero.

2. In the case where $ \bar u$ is the average, we may extend and cut
off as in the previous result and let $ v = \eta E( u-\bar u)$. We
apply the trace theorem to obtain that $v$ is in a Besov space $B^ {
  p,p}_{ 1-1/p}$ (\cite{MR98a:47071}[p.~75]. Next, we can use the embedding
for Besov spaces on the boundary to conclude that 
$$\| v\|_ {L^ {(n-1)p/(n-p)} ( \partial \Omega)} \leq \|v\|_ {B^ { p,p}
  _{ 1- 1 / p} (\Omega)}.$$
As $v =u - \bar u$ on $ \sball 0 1$, we  have the desired result. 

3. In the case where $ \sball 0 1 $ intersects the boundary and $u$
vanishes on a set $S$, we use
the previous result to conclude that we have the estimate
$$
\int_{ \Delta  } u ^ p \leq  C\int _ { \ball 0 1 \cap \reals ^ n _+}
|\nabla u | ^ p \, dy.
$$

4. From this inequality, we then can show that 
$$
\int _{B_+} u^p \,dx \leq C  \int _ \Delta u^p + \int _{B_+} |\nabla u
| ^ p \, dy.
$$

5. Since $u$ is in $W^ { 1,p}$, we  may then extend and multiply by a
cut-off function and obtain  a function $v$ which is in $W^ { 1,p}(
\reals^n _+)$. Applying the trace theorem, we conclude that $v$ is in
the Besov space $ B^ {p,p}_{   1-1/1p}$ of the boundary and this space
embeds into $ L^ { (n-1)p/(n-p)}$. 

6. Given the result in a half space, the result stated in a Lipschitz
domain follows by a change of variables. 
\end{proof} 

}

\begin{lemma} 
\label{RHEstimate}
Let $ \Omega$, $D$ and $N$ be a standard domain for the mixed
problem.   Let $ x \in \Omega$ and suppose that $r$ satisfies $ 0< r <
r_0$. 
Let $u$ be a weak solution of the mixed problem
(\ref{WeakMix}) with  zero  Dirichlet data  and Neumann data $f$ in
$L^p(N)$ which is supported in  $ N \cap \sball x r $. 
There exists  $ p_0=p_0(n,M)  > 2 $ so 
 that for $t $ in  
  $[2,p_0) $ if $n \geq 3$ or $t$ in $(2,p_0)$ if $n =2 $, we have
the estimate 
\begin{eqnarray*} 
\lefteqn{ \left (  \average _{ \dball x r  } |\nabla u |^t \, dy  \right
)^ { 1/t}}   \\
& \leq &  C\left[   \average _ {\dball  x {2r} } |\nabla u  |\,dy
  +\left( \frac 1 { r^ { n-1} } 
   \int  _{  \sball x {2r}  \cap N }  |f|^{t(n-1)/n}\,
d\sigma\right) ^ { n/(t(n-1))}\right] .
\end{eqnarray*}
The constant in this estimate depends on $t$,  $M$ and $n$. 
\end{lemma}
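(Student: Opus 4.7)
The plan is to upgrade Lemma~\ref{MSIRHI} to higher integrability by the Gehring--Giaquinta reverse H\"older method, after one preparatory step. The boundary exponent $p_0:=2(n-1)/(n-2)$ appearing in Lemma~\ref{MSIRHI} is coarser than the target exponent $t(n-1)/n$, so I first rework the proof of Lemma~\ref{MSIRHI} using the dual H\"older pairing
$$
\Bigl|\int_{\partial\Omega} f\,\phi\,d\sigma\Bigr|\leq \|f\|_{L^{2(n-1)/n}(\partial\Omega)}\,\|\phi\|_{L^{p_0}(\partial\Omega)},
$$
estimating $\|\phi\|_{L^{p_0}(\partial\Omega)}$ by $\|\phi\|_{W^{1,2}(\Omega)}$ via the Sobolev trace, and absorbing the resulting gradient feedback on a slightly enlarged ball by combining a small Young parameter with the Fabes--Stroock hole-filling iteration already used at the end of the proof of Lemma~\ref{MSIRHI}. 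This produces the variant
$$
(\ast)\qquad\Bigl(\average_{\dball{x_0}{s}}|\nabla u|^2\Bigr)^{1/2}\leq C\average_{\dball{x_0}{Cs}}|\nabla u|+C\Bigl(\frac{1}{s^{n-1}}\int_{\sball{x_0}{Cs}}f^{2(n-1)/n}\,d\sigma\Bigr)^{n/(2(n-1))}
$$
for all $x_0\in\dball{x}{r}$ and $s<C^{-1}r$.

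Next I introduce the forcing function $F(y):=\bigl(P_{Cr}(f^{2(n-1)/n})(y)\bigr)^{n/(2(n-1))}$ on $\dball{x}{Cr}$. Since $\sball{x_0}{Cs}\subset\sball{y}{2Cs}$ for every $y\in\dball{x_0}{Cs}$, the bracketed boundary average in $(\ast)$ is dominated pointwise by $CF(y)$. Averaging in $y$, squaring, and using $(\average F)^2\leq\average F^2$ converts $(\ast)$ into the reverse H\"older inequality
$$
\average_{\dball{x_0}{s}}|\nabla u|^2 \leq C\Bigl(\average_{\dball{x_0}{Cs}}|\nabla u|\Bigr)^2 + C\average_{\dball{x_0}{Cs}} F^2,
$$
which is in the form required by the Gehring--Giaquinta lemma quoted in the paper. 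The lemma yields $\epsilon>0$ such that for $t\in[2,2+\epsilon)$,
$$
\Bigl(\average_{\dball{x}{r}}|\nabla u|^t\Bigr)^{1/t}\leq C\Bigl(\average_{\dball{x}{Cr}}|\nabla u|^2\Bigr)^{1/2}+C\Bigl(\average_{\dball{x}{Cr}} F^t\Bigr)^{1/t}.
$$
I would bound the first right-hand term by applying $(\ast)$ once more together with Jensen's inequality ($(\average f^{2(n-1)/n})^{n/(2(n-1))}\leq(\average f^{t(n-1)/n})^{n/(t(n-1))}$ for $t\geq 2$), and the second by applying Lemma~\ref{PEstimate} to $g:=f^{2(n-1)/n}$ with parameters $p_{\mathrm{lem}}=t/2$ and $q_{\mathrm{lem}}=tn/(2(n-1))$; this choice saturates $q_{\mathrm{lem}}=p_{\mathrm{lem}}\,n/(n-1)$, and the requirement $p_{\mathrm{lem}}>1$ forces $t>2$. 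A short computation then gives $(\average F^t)^{1/t}\leq C(\frac{1}{r^{n-1}}\int f^{t(n-1)/n}\,d\sigma)^{n/(t(n-1))}$, which combines with the $L^2$ estimate to deliver the desired inequality for $t\in(2,2+\epsilon)$. The endpoint $t=2$ is already contained in $(\ast)$.

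The hard part will be step $(\ast)$. The exponent $p_0$ in Lemma~\ref{MSIRHI} was chosen precisely so that the boundary and interior Poincar\'e--Sobolev exponents both come out to $2n/(n+2)$, making the Fabes--Stroock iteration transparent. With the smaller boundary exponent $2(n-1)/n$ this coincidence breaks: the trace inequality now feeds back a full $L^2$ norm of the gradient on a slightly enlarged ball, and one must absorb this via a careful combination of Young's inequality with the hole-filling iteration. Once $(\ast)$ is in hand, the remaining Gehring step and the invocation of Lemma~\ref{PEstimate} are routine.
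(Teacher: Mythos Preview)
Your overall plan coincides with the paper's: cast the reverse H\"older estimate in Giaquinta's form with a forcing built from the operator $P$, apply the self-improvement lemma, and then convert the solid average of the forcing back to a boundary integral via Lemma~\ref{PEstimate}. The paper's proof does precisely this, feeding Lemma~\ref{MSIRHI} directly into Giaquinta with forcing $F=(P(f^p))^{1/p}$, where $p=2(n-1)/(n-2)$ is the exponent appearing in Lemma~\ref{MSIRHI}.

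You correctly observe, however, that this choice of $p$ does not deliver the stated exponent $t(n-1)/n$ on $f$: applying Lemma~\ref{PEstimate} to $P(f^p)$ requires its parameter $p_{\mathrm{lem}}>1$, which forces the final power of $f$ to exceed $p$, and for $t$ near $2$ one has $p=2(n-1)/(n-2)>t(n-1)/n$. (In all of the paper's applications $f$ is an atom and the precise exponent is immaterial, but the lemma as stated needs your sharpening.) Your variant $(\ast)$ with boundary exponent $2(n-1)/n$ closes this gap, and the subsequent application of Lemma~\ref{PEstimate} with $p_{\mathrm{lem}}=t/2$ and $q_{\mathrm{lem}}=tn/(2(n-1))$ is exactly right.

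One simplification: $(\ast)$ is less delicate than you fear. After the trace bound $\|\eta^2(u-E)\|_{L^{2(n-1)/(n-2)}(\partial\Omega)}\le C\|\nabla(\eta^2(u-E))\|_{L^2(\Omega)}$ and Young's inequality with a small parameter, the contribution $\varepsilon\|\eta\nabla u\|_{L^2}^2$ is absorbed directly into the left side of the Caccioppoli estimate---no iteration is needed for this. What remains is only the solid term $\int|u-E|^2|\nabla\eta|^2$, handled exactly as in the proof of Lemma~\ref{MSIRHI} (the two cases according to whether $\ball{x}{s+\epsilon}$ meets $D$, followed by the Fabes--Stroock iteration to reduce the gradient exponent to $1$). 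The Young absorption and the hole-filling are sequential, not intertwined.
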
 
\note{
In applications, we seem to only need $f$ bounded. Is it worth the
trouble to keep track of the exponents? 
}

\begin{proof} 
According to Lemma \ref{MSIRHI}, $ \nabla u$ satisfies a reverse
H\"older inequality and thus we may apply a result of Giaquinta
\cite[p.~122]{MG:1983} to conclude that there exists $p_0 > 2$ so that we
have
$$
\left( \average _ {\dball x r  } | \nabla u|^  t
\, dy \right) ^ { 1/t} \leq C\left [ \average_ {\dball  x {2r} } |\nabla u | \, dy 
+ \left(  \average _ { \dball  x { 2r} }
(P_ {2r} |f|^p)^{ t/p}  \, dy \right) ^ { 1/ t }\right]
$$
for $t $ in $[2,p_0)$ and $p$ as in Lemma \ref{MSIRHI}. 
From this, we may use Lemma \ref{PEstimate} 
to obtain 
$$
\left( \average _ {\dball x r  } | \nabla u|^  t
\, dy \right) ^ { 1/t} \leq C\left [ \average_ {\dball  x {2r} } |\nabla u | \, dy 
+ \left(  \average _ { \sball  x { 4r} }
 |f|^{ t(n-1)/n}  \, d\sigma \right) ^ { n/ t(n-1) }\right]
$$
when $ n \geq 3$ and $ t $ is in $[2,p_0)$. If $n=2$ we need $ t >2$
so that $f$ is raised to a power larger than 1. 
Now a  simple  argument that involves covering $ \sball x r$ by
surface balls of radius $r/4$ allows us to 
 conclude the estimate
of the Lemma.  
\end{proof} 

\section{Estimates for solutions with atomic data}
\label{Atoms}

We establish an estimate for the solution of the mixed problem when
the Neumann data is an atom for $ H^1$ and the Dirichlet data is zero.
The key step is to establish decay of the solution as we move away
from the support of the atom.  We will measure the decay by taking
$L^q$-norms in dyadic rings around the support of the atom. Thus,
given a surface ball $ \sball x r$, $ x \in \partial \Omega$, we
define $\Sigma_k = \sball x { 2^ k r} \setminus \sball x {2^ { k-1}
r}$ and define $ S_k = \dball x {2^k r } \setminus \dball x { 2^ {
    k-1} r } $.

\begin{theorem} \label{AtomicTheorem} 
Let $ \Omega$, $N$ and $D$ be a  standard  domain for the mixed problem. Let $u$
be a weak solution of the mixed problem with Neumann data $a$ which is
an atom which is supported in   $N \cap\sball x r $ and zero Dirichlet
data. 

If $p_0$ is as in Lemma \ref{RHEstimate} and $ 1< q < p_0/2$, then we
have $ \nabla u \in L^ q ( \partial \Omega)$,  
\begin{equation} \label{LocalPart}
\left( \int _{\sball x {8r} }   |\nabla u |^q \, d \sigma  \right)^ {
  1/q} 
\leq C \sigma (\sball x  {8r} )^ {-1/q'}
\end{equation}
and for $ k \geq 4$, 
\begin{equation} \label{Decay}
\left ( \int _{ \sring k}  |\nabla u |^q \,d\sigma  \right) ^ { 1/q} 
\leq 
C 2^ {-\beta k}  \sigma(  \sring k ) ^ {- 1/q'} . 
\end{equation}
Here, $ \beta $ is as in Lemma \ref{Green}  and the constant $C$ in
the estimates (\ref{LocalPart}) and (\ref{Decay})
depends on $q$ and  the global character of the domain. 
\end{theorem}

If $r < r_0 $ and    $x$ is in $\partial \Omega$, then we may
construct a star-shaped Lipschitz domain $ \locdom x r  =
Z_r(x) \cap \Omega$  where $\cyl x r $ is the coordinate cylinder defined
above. Given a function $v$ defined in $ \Omega$, $x \in \partial
\Omega$,  and $r>0$, we
define a truncated non-tangential maximal function $ \nontan{v_r} $ by 
$$
\nontan v_r (x) = \sup _{ y \in \ntar  x  \cap \ball x r } |v(y)|.
$$

\begin{lemma} 
\label{NeumannRegularity} Let $\Omega$ be a Lipschitz domain. 
Suppose that $ x \in \partial \Omega $ and $0< r < r_0$.  Let $u$ be a
harmonic function in $ \locdom x {4r}   $. 
If $\nabla u  \in L^2 ( \locdom x {4r} )$ and
 $ \partial u /\partial \nu  $ is in $L^ 2 (\partial \Omega \cap
\partial \locdom x {4r} ) $, then we  have  $ \nabla u  \in L^2 ( \sball x r)$ and 
$$
\int _ { \sball x {r}}  (\nontan {( \nabla u )}_r)^2 \, d\sigma
\leq C \left ( \int _ { \partial \Omega \cap \partial \locdom x {4r}  } \left |\frac { \partial u  }{ \partial
    \nu } \right | ^ 2 \, d\sigma 
+ \frac 1 r  \int _ {\locdom x {4r}  } |\nabla u |^2 \,
dy\right).
$$
The constant $C$ depends only on the dimension $n$ and $M$. 
\end{lemma}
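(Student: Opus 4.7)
The plan is to reduce the estimate to the $L^2$-Neumann theory of Jerison--Kenig applied on an auxiliary Lipschitz subdomain $\Omega^*$ sandwiched between $\locdom{x}{r}$ and $\locdom{x}{4r}$, chosen so that $\sball{x}{r} \subset \partial \Omega^* \cap \partial \Omega$ while the \emph{interior face} $\Gamma_i = \partial \Omega^* \setminus \partial \Omega$ stays at distance at least $cr$ from $\partial \Omega$. Concretely, in the coordinate cylinder $\cyl{x}{4r}$ in which $\partial \Omega$ is the graph of a Lipschitz function $\phi$, one may take
$$
\Omega^* = \{(y', y_n) \in \cyl{x}{2r} : \phi(y') < y_n < \phi(y') + r \eta(y')\},
$$
where $\eta$ is a smooth cutoff that equals a sufficiently large constant $C(M)$ on the horizontal projection of $\sball{x}{r}$ and vanishes near the boundary of the horizontal cross-section of $\cyl{x}{2r}$. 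Then $|\nabla(r\eta)|$ is bounded by a constant depending only on $M$, so $\Omega^*$ is Lipschitz with constant depending only on $M$ and contains $\locdom{x}{r}$.

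Next, apply the $L^2$-Neumann estimate on $\Omega^*$ to the harmonic function $u$, yielding
$$
\int_{\partial \Omega^*} (\nontan{(\nabla u)})^2 \, d\sigma \leq C \int_{\partial \Omega^*} \left|\frac{\partial u}{\partial \nu^*}\right|^2 \, d\sigma,
$$
with $\nu^*$ the outer normal to $\partial \Omega^*$ and the non-tangential maximal function taken with respect to $\Omega^*$. Split $\partial \Omega^* = \Gamma_b \cup \Gamma_i$ with $\Gamma_b = \partial \Omega^* \cap \partial \Omega \supset \sball{x}{r}$. On $\Gamma_b$ the normal derivative coincides up to sign with the given $\partial u / \partial \nu$, which lies in $L^2$ by hypothesis. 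On $\Gamma_i$, which sits at distance at least $cr$ from $\partial \Omega$, the standard interior gradient estimate for harmonic functions gives, for each $y \in \Gamma_i$,
$$
|\nabla u(y)|^2 \leq \frac{C}{r^n}\int_{\ball{y}{cr}}|\nabla u|^2 \, dz,
$$
and integrating over $\Gamma_i$ via a finite overlap cover by such balls, each contained in $\locdom{x}{4r}$, yields
$$
\int_{\Gamma_i}|\nabla u|^2 \, d\sigma \leq \frac{C}{r}\int_{\locdom{x}{4r}}|\nabla u|^2 \, dz.
$$
Since $\locdom{x}{r} \subset \Omega^*$, for $y \in \sball{x}{r}$ the truncated approach set $\ntar{y} \cap \ball{y}{r}$ lies in $\Omega^*$, so $\nontan{(\nabla u)}_r(y)$ is dominated by the non-tangential maximal function computed in $\Omega^*$. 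Combining the three displays finishes the proof.

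The main obstacle is the legitimate invocation of the Jerison--Kenig estimate, since a priori $\nontan{(\nabla u)}$ need not be finite on $\partial \Omega^*$; the theorem supplies the unique solution with this property for given $L^2$ Neumann data. This is addressed via uniqueness: let $v$ be the Jerison--Kenig solution on $\Omega^*$ with Neumann data $\partial u / \partial \nu^*$ on $\partial \Omega^*$ (the data lies in $L^2$ by the same splitting argument as above and has mean zero by the divergence theorem applied to $u$). Then $v$ satisfies $\nontan{(\nabla v)} \in L^2(\partial \Omega^*)$ together with the estimate. The difference $w = u - v$ is harmonic in $\Omega^*$ with $\nabla w \in L^2(\Omega^*)$ and weakly vanishing Neumann data on $\partial \Omega^*$; uniqueness of weak $L^2$-gradient Neumann solutions forces $w$ to be constant, so $\nabla u = \nabla v$ and the estimate transfers to $u$.
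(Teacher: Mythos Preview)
Your overall strategy---apply the Jerison--Kenig $L^2$-Neumann estimate on an auxiliary Lipschitz subdomain and invoke uniqueness to identify $u$ with the Jerison--Kenig solution---is sound and close in spirit to the paper's argument. However, the construction of $\Omega^*$ cannot have the property you claim. You assert that the interior face $\Gamma_i = \partial\Omega^* \setminus \partial\Omega$ stays at distance at least $cr$ from $\partial\Omega$, but this is topologically impossible: since $\Gamma_b = \partial\Omega^* \cap \partial\Omega$ is a nonempty proper subset of $\partial\Omega$ and $\partial\Omega^*$ is a closed hypersurface, $\Gamma_i$ must meet $\overline{\Gamma_b} \subset \partial\Omega$ along the relative boundary of $\Gamma_b$. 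In your explicit construction this shows up concretely: where the cutoff $\eta$ vanishes, the top surface $y_n = \phi(y') + r\eta(y')$ meets the bottom $y_n = \phi(y')$, and since a smooth nonnegative $\eta$ has $\nabla\eta = 0$ on the boundary of its support, the two surfaces are tangent there, producing a cusp---so $\Omega^*$ is not even Lipschitz. Even with a Lipschitz $\eta$ vanishing transversally, $\Gamma_i$ still touches $\partial\Omega$ along that edge, and the interior gradient estimate with balls of fixed radius $cr$ is unavailable there; the bound $\int_{\Gamma_i}|\nabla u|^2\,d\sigma \leq Cr^{-1}\int_{\locdom{x}{4r}}|\nabla u|^2\,dy$ therefore has no justification as written.

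The paper circumvents this obstruction differently: it works on $\locdom{x}{4r}$ but multiplies $u$ by a cutoff $\eta$ that is one on $\cyl{x}{3r}$ and vanishes near the lid $\partial\cyl{x}{4r} \cap \Omega$, so that the Neumann data of $\eta u$ is zero on the lid. The price is that $\eta u$ is not harmonic; one writes $\eta u = v + w$ with $w = \funsol * \Delta(\eta u) \in W^{2,2}(\reals^n)$ absorbing the inhomogeneity, and applies Jerison--Kenig to the harmonic remainder $v$. Your route can be repaired without this device by a Fubini argument: take $\Omega^* = \locdom{x}{\rho}$ and note that $\int_{2r}^{3r}\bigl(\int_{\partial\cyl{x}{\rho}\cap\Omega}|\nabla u|^2\,d\sigma\bigr)\,d\rho \leq C\int_{\locdom{x}{3r}}|\nabla u|^2\,dy$, so for some $\rho\in(2r,3r)$ the inner integral is at most $Cr^{-1}\int_{\locdom{x}{4r}}|\nabla u|^2\,dy$, giving the needed $L^2$ control on $\Gamma_i$ directly.
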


\begin{proof} 
Since the estimate only involves $\nabla u$,  we may
subtract a constant from $u$ so that $ \int _ {\locdom x r } u \, dy =
0$. 
We pick a smooth cut-off function $ \eta$ which is one on $ 
Z_{3r}(x)$ and zero outside $Z_{4r}(x)$.  Since we assume  that
$\nabla u$ is in $L^2( \locdom x {4r } )$,  it follows that  $\Delta ( \eta u) =
u \Delta  \eta  + 2 \nabla u \cdot \nabla \eta $ is in
$L^2( \locdom x {4r} )$. Thus, with $ \funsol$ the usual fundamental solution
of the Laplacian, $w = \funsol*(\Delta( \eta u))$ will be in the
Sobolev space $W^ {2,2}( \reals ^n)$. We have defined $ \Delta (\eta u
)$ to be zero outside $ \locdom x {4r}$ in order to make sense of the
convolution in the definition of  $w$.     Next, we 
let $v$ be the solution of  the  Neumann problem 
$$
\left\{ \begin{array} {ll} 
\Delta v = 0, \qquad  & \mbox {in } \locdom x {4r
}  \\
\bigfrac { \partial v } { \partial \nu } = \bigfrac { \partial( \eta  u) }
{ \partial \nu }- \bigfrac {\partial w }{ \partial \nu},  \qquad  & \mbox{on
}\partial  \locdom x  {4r  } . 
\end{array}
\right.
$$
According to Jerison and Kenig \cite{JK:1982c}, the solution $v$ will
have non-tangential maximal function in $L^2 ( \partial \locdom x {4r}
)$.  By
uniqueness of weak solutions to the Neumann problem, we may add a
constant to $v$ so that we have $ \eta u = v+w$.  As $w$ and all its
derivatives are bounded in $ \locdom x {2r}$ and the non-tangential
maximal function of $\nabla v$ is in $L^2( \bdry \locdom x {4r})$, we
obtain the  Lemma. 
\end{proof}

The proof
of the following   Lemma for the regularity  problem  is identical to
the proof of Lemma \ref{NeumannRegularity}.

\begin{lemma} 
\label{DirichletRegularity}
Let $\Omega$ be a Lipschitz domain. 
Suppose that $ x \in \partial \Omega $ and $0<  r < r_0$.  Let $u$ be a
harmonic function in $  \locdom x { 4r} $. 
 If $\nabla u  \in L^2 (\locdom x {4r} )$ and
 $ \tangrad  u $ is in $L^ 2 (\partial \Omega \cap \partial \locdom x { 4r} )$, then we
 have  $ \nabla u  \in L^2 ( \sball x r)$ and 
$$
\int _ { \sball x {r}}  (\nontan {( \nabla u )}_r)^2 \, d\sigma
\leq C \left( \int _ { \partial \Omega \cap \partial  \locdom x { 4r} }| \tangrad u |^2 \, d\sigma 
+  \frac 1 r \int _ {\locdom x { 4r} }  |\nabla u |^2 \,
dy\right).  
$$
The constant $C$ depends only on the dimension $n$ and $M$. 
\end{lemma}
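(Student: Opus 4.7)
The plan is to mirror the argument given for Lemma \ref{NeumannRegularity}, substituting the regularity result for the Dirichlet problem (Jerison--Kenig, or Verchota for Lipschitz domains) in place of Jerison--Kenig's Neumann estimate. Both sides of the desired inequality are invariant under adding a constant to $u$, so I first normalize $u$ by subtracting a constant so that $\int_{\locdom x {4r}} u \, dy = 0$. This normalization is the key preliminary step: it will let me use a scale-invariant Poincar\'e inequality to control $\|u\|_{L^2(\locdom x {4r})}$ and (by trace) $\|u\|_{L^2(\bdry \locdom x {4r})}$ in terms of $r$ times $\|\nabla u\|_{L^2(\locdom x {4r})}$, which is exactly the factor $1/r$ appearing on the right-hand side.

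Next I choose a smooth cut-off $\eta$ that equals one on $\cyl x {3r}$ and vanishes outside $\cyl x {4r}$, with $|\nabla \eta|\lesssim 1/r$ and $|\Delta \eta|\lesssim 1/r^2$. Since $u$ is harmonic, $\Delta(\eta u) = 2 \nabla u\cdot \nabla \eta + u \Delta \eta$ lies in $L^2(\reals^n)$, so $w = \funsol *\Delta(\eta u)$ belongs to $W^{2,2}(\reals^n)$, and by the trace theorem its boundary values and tangential gradient are in $L^2(\bdry \locdom x {4r})$, with norms controlled by the right-hand side of the lemma. Now I let $v$ solve the $L^2$ regularity Dirichlet problem in $\locdom x {4r}$ with boundary data $\eta u - w$. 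The tangential gradient of this data is
$$
\tangrad(\eta u - w) = \eta \tangrad u + u \tangrad \eta - \tangrad w,
$$
whose $L^2(\bdry\locdom x {4r})$ norm is bounded by the quantity on the right side of the lemma (the first term by hypothesis, the second by Poincar\'e plus trace applied to $u$, the third by the $W^{2,2}$ bound on $w$). The regularity theorem then produces $v$ with $\nontan{(\nabla v)}\in L^2(\bdry\locdom x {4r})$ and the appropriate estimate.

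By uniqueness for the regularity Dirichlet problem, $\eta u = v + w$ in $\locdom x {4r}$. Since $\eta \equiv 1$ on $\locdom x {3r}$, at boundary points of $\sball x r$ the truncated non-tangential maximal function $\nontan{(\nabla u)}_r$ is pointwise dominated by $\nontan{(\nabla v)} + \nontan{(\nabla w)}_r$, and the second term is controlled by the Sobolev embedding for $w\in W^{2,2}(\reals^n)$ into its boundary trace. Combining these bounds closes the estimate. The main technical point to watch is (i) verifying that $\locdom x {4r}$ is a Lipschitz domain whose character depends only on $M$ and $n$ after rescaling to radius one, so that the constants in the regularity theorem and in Poincar\'e are uniform in $x$ and $r$, and (ii) confirming that the regularity problem in Lipschitz domains admits the required uniqueness up to additive constants so that the identification $\eta u = v+w$ is legitimate; neither is deep, but both are where the parallel with Lemma \ref{NeumannRegularity} must be checked.
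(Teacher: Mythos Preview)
Your proposal is correct and follows exactly the approach the paper takes: the paper simply states that the proof is identical to that of Lemma~\ref{NeumannRegularity}, and you have carried out precisely that parallel argument, replacing the $L^2$ Neumann estimate of Jerison--Kenig by the $L^2$ regularity estimate for the Dirichlet problem and checking the bookkeeping (Poincar\'e, trace, uniqueness) that the paper leaves implicit.
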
 

The following weighted estimate will be an intermediate step towards
our estimates for solutions with atomic data.
In the next lemma,   $\Omega$  is  a bounded Lipschitz domain and 
the   boundary is written $ \partial \Omega = D \cup
N$. Recall that $ \delta(x)$ denotes the distance from
$x$ to the set $\crease$.

\begin{lemma} \label{Whitney}
Let $\Omega$, $D$ and $N$ be a standard domain for the mixed problem. 

Let $u$ be a weak solution of the mixed problem (\ref{WeakMix}) with
Neumann data $f_N \in L^2 (N)$ and zero Dirichlet data. 

Let $\epsilon \in \reals$, $x \in \partial \Omega$ and $0< r < r_0$
and assume that for some $A>0$,  $ \delta (x) \leq Ar$. Then we have 
$$
\int _ {\sball x r    }   (\nontan{( \nabla u)}_{c\delta}) ^2 \, \delta  ^{1-\epsilon}  d\sigma 
\leq C \left ( \int _ { \sball x {2r}}
|f_N|^2  \delta  ^ { 1- \epsilon } \,  d\sigma
  + \int _ { \dball x { 2r}   }  |\nabla u  |^2 \, \delta  ^ {
    -\epsilon } \, dy
\right )  . 
$$
The constant in this estimate depends on $M$, $n$,  $\epsilon$  and $A$. 
\end{lemma}

\note
{
Using a Hardy inequality, we can probably show that $u$ in $L^2( N
;   \delta \, d\sigma )$ implies that $u$ is in the dual of $W^ { 1/2,
    2}_D ( \partial \Omega)$.  

}
The proof below  uses a Whitney decomposition and thus it is simpler if
we use surface cubes, rather than the  surface balls used elsewhere. 
A {\em  surface cube }is the image of a 
cube in $ \reals ^ { n-1}$ under the  mapping  $
 x' \rightarrow ( x' , \phi(x'))$. Obviously, each cube will lie in a
 coordinate cylinder.

\begin{proof}
We may assume  that $ \dball x {2r}$ is contained in a coordinate
cylinder $ Z_{ 2r_0}$. If $Z_ { 100r_0} \cap \partial \Omega \subset N$
or $Z_ { 100r_0} \cap \partial \Omega \subset D$, then the estimate of
the Lemma follows easily from   Lemma \ref{NeumannRegularity} or Lemma
\ref{DirichletRegularity} since we have that $ \delta (y) $ is
equivalent to $r$ for $y \in \dball x {2r}$.  This equivalency follows
from our assumption that $ \delta (x) < Ar$ and that $Z_{ 100r_0}$
does not intersect $\Lambda$. 

If  $ Z_ { 100r_0}$ meets both $D$ and $N$, we begin by finding a
decomposition of $ ( \partial \Omega \cap  Z_{ 4r_0})  \setminus \Lambda $ into non-overlapping
surface cubes $ \{ Q_j \}$ which satisfy:  
1) For each cube $ Q_j$, we have constants $c''$ and $c'$ so that
$c''\delta (y) \leq \diam (Q_j) \leq c ' \delta (y)$ for $ y \in
Q_j$. The constant 
$c'$ may be chosen as small as we like. 
2) We let $T(Q) = \{ y \in \Omega : \dist (y, Q ) < \diam Q\}$. Then
the family $ \{ T(2Q_j)\}$ has bounded overlaps and thus 
$$
\sum \chi _{ T(2Q_j)}\leq C (n, M, c'').
$$
To construct the family of surface cubes, begin with a Whitney
decomposition of $ \reals ^ { n-1}\setminus \{ ( \psi (x''), x'' ):
x''\in \reals ^ { n -2}\}$ and then map the cubes onto the boundary
with the map $x' \rightarrow ( x', \phi(x'))$. Here, $ \phi$ and $\psi
$ are the functions used to describe $\partial \Omega$ and $\Lambda$
in the coordinate cylinder $Z_{ r_0}$.

As the surface cubes $ Q_j$ are connected and $ \delta $ never vanishes on
$Q_j$, we have that either $ Q_j \subset N$ or that $ Q_j \subset
D$. We choose the constant $ c'$ small so that $ Q _ j \cap \sball x r
\neq \emptyset$ implies that $T(2Q_j)\subset \dball x { 2r}
$. Let $r_j$ be the diameter of the cube $r_j$. Applying Lemma \ref{NeumannRegularity} or Lemma
\ref{DirichletRegularity}, we conclude that 
\begin{equation} \label{Whit1}
\int _ { Q_j } |\nabla u |^ 2 \, d\sigma \leq C \left ( \int _ { 2Q_j
  \cap N} \left| \frac { \partial u } { \partial \nu } \right | ^ 2 \,
d\sigma + \frac 1 { r_j } \int _ { T( 2Q_j)} |\nabla u |^ 2 \, dy
\right) . 
\end{equation}
We multiply equation (\ref{Whit1}) by $ r_j ^ { 1-\epsilon } $, choose
$c'$ small so 
that $ r _j $ is equivalent to $ \delta (y)$ in $T(2Q_j)$ and obtain 
\begin{equation} \label{Whit2} 
\int _ { Q_j } |\nabla u |^ 2  \delta ^ { 1- \epsilon}  \, d\sigma \leq C \left ( \int _ { 2Q_j
  \cap N} \left | \frac { \partial u } { \partial \nu } \right | ^ 2
\delta ^ { 1- \epsilon } \,
d\sigma +  \int _ { T( 2Q_j)} |\nabla u |^ 2  \delta ^ { -\epsilon }\,
dy 
\right) . 
\end{equation}
We sum over $j$ such that $ Q_j \cap \sball x r \neq \emptyset$ and
use that the family $ \{ T(2Q_j)\}$ has bounded overlaps to obtain the
Lemma. 
\end{proof} 

An important part of the proof of our estimate for the mixed problem
is to show that a solution with Neumann data an atom will decay as we
move away from the support of the atom.  This decay is encoded in
estimates for the Green function for the mixed problem. These
estimates rely in large part on the work of de Giorgi \cite{EG:1957},
Moser \cite{JM:1961} and Nash \cite{JN:1958}, on H\"older continuity
of weak solutions of elliptic equations with bounded and measurable
coefficients, and the work of Littman, Stampacchia and Weinberger
\cite{LSW:1963} who constructed the fundamental solution of such
operators.  Also, see Kenig and Ni \cite{MR87f:35065} for the
construction of a global fundamental solution in two dimensions.
Given the free space fundamental solution, the Green function may be
constructed by reflection in a manner similar to the construction
given for graph domains in \cite{LCB:2008}.  A similar argument was
used by Dahlberg and Kenig \cite{DK:1987}  and by Kenig and Pipher
\cite{KP:1993} in their studies of the
Neumann problem.   Once we have a Green function which satisfies the
correct boundary conditions in a coordinate cylinder, we may solve a
weak version of the mixed problem to obtain a Green function in all of
$ \Omega$.

\begin{lemma} \label{Green}
Let $\Omega$, $N$ and $D$ be a standard domain for the mixed problem.
There exists a Green function $G(x,y)$ for the mixed problem which
satisfies: 
1) If $G_x(y) = G(x,y)$, then $ G_x$ is in $W^ { 1,2}_D (
\Omega \setminus \ball x r )$ for all $r>0$, 
2) $\Delta G_x = \delta _x$, the Dirac $\delta$-measure at $x$, 
3) If $f_N$ lies in $ W^ {-1/2, 2} _D ( \partial \Omega)$, then the
solution of the mixed problem with $f_D=0$ can be represented by
$$
u ( x) =  - \langle f_N , G_x\rangle _{\partial \Omega} , 
$$
4) The Green  function  is H\"older continuous away from the pole and
satisfies the estimates 
$$
|G(x,y) - G(x,y')| \leq \frac { C|y-y'|^ \beta } { |x-y |^ { n-2+\beta
  }} , \qquad |x-y| > 2 |y-y'|, 
$$
$$| G(x,y) | \leq \frac C { |x-y|^ { n-2} }, \qquad n\geq 3,  $$
and with $ d = \diam( \Omega)$, 
$$| G(x,y) | \leq C( 1+ \log (d/ |x-y|)) , \qquad n =  2.  $$
\end{lemma}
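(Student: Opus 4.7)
The plan is to construct $G_x$ by subtracting from the free-space fundamental solution $\funsol_x(y) = \funsol(y-x)$ a correction $v_x$ that carries the boundary mismatch, then to invoke De Giorgi--Nash--Moser regularity together with reflection in order to verify the pointwise and H\"older bounds.

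First I would fix $x \in \Omega$ and observe that $\funsol_x \in W^{1,2}(\Omega \setminus \ball x r)$ for any $r>0$, so its restriction to $D$ lies in $W^{1/2,2}(D)$ and its normal derivative on $\partial\Omega$ lies in $W^{-1/2,2}_D(\partial\Omega)$. I would then let $v_x$ be the weak solution of the mixed problem
$$
\Delta v_x = 0 \text{ in } \Omega, \qquad v_x = \funsol_x \text{ on } D, \qquad \frac{\partial v_x}{\partial \nu} = \frac{\partial \funsol_x}{\partial \nu} \text{ on } N,
$$
whose existence follows from the Lax--Milgram argument outlined after (\ref{coerce}). Setting $G_x = \funsol_x - v_x$ immediately gives property 1 (since $\funsol_x \in W^{1,2}(\Omega\setminus \ball x r)$ and $v_x - \funsol_x \in W^{1,2}_D(\Omega)$ away from $x$) and property 2 (since $\Delta \funsol_x = \delta_x$ and $\Delta v_x = 0$). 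Property 3 is then the standard Green identity: choosing test functions in $W^{1,2}_D$ and approximating $G_x$ by truncation outside of $\ball x r$ and sending $r\to 0$ produces $u(x) = -\langle f_N, G_x\rangle_{\partial\Omega}$.

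For the H\"older and size bounds in property 4 I would separate the behavior near the pole, near the smooth part of $\partial\Omega$ (pure Dirichlet or pure Neumann), and near $\crease$. Away from $x$, $G_x$ is a weak solution of a divergence-form equation with bounded measurable coefficients in any subdomain; on the interior, the standard De Giorgi--Nash--Moser theorem gives interior H\"older continuity with a uniform exponent $\alpha$. At a boundary point $y \in D$ away from $\crease$ I would flatten the boundary by a bi-Lipschitz change of variables and extend $G_x$ by odd reflection (using $G_x = 0$ on $D$) to a genuinely interior solution of an elliptic equation in divergence form with bounded measurable coefficients, and apply De Giorgi--Nash--Moser in the enlarged domain. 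At $y \in N$ away from $\crease$ I would do the same with even reflection, which is legitimate because the Neumann condition is zero; the reflected function is again a weak solution of a divergence form equation in the reflected domain, as in the construction of Kenig--Ni \cite{MR87f:35065} and the construction sketched in \cite{LCB:2008} and in \cite{DK:1987}.

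The main obstacle is the behavior near $\crease$, where neither odd nor even reflection alone preserves the boundary condition. My plan here would be to argue indirectly: use the Cacciopoli-type estimate implicit in Lemma \ref{MSIRHI} together with property 1 to bound $G_x$ in $W^{1,2}$ on annular regions around the pole, then combine the reflection arguments above on each half of a coordinate cylinder to deduce a uniform H\"older continuity at $\crease$ by a Morrey-type estimate (the exponent $\alpha$ may be smaller here, but only depends on $M$ and $n$). For the size estimate $|G(x,y)| \le C|x-y|^{2-n}$ (and its logarithmic analogue when $n=2$) I would combine the identity $G_x = \funsol_x - v_x$ with an $L^\infty$ bound on $v_x$ away from $x$, obtained from a Moser iteration applied to the mixed problem satisfied by $v_x$ (whose data $\funsol_x|_D$ and $\partial\funsol_x/\partial\nu|_N$ are bounded by $C|x-y|^{2-n}$ on the relevant scale). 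Rescaling, applying these estimates at scale $|x-y|$, and chaining with the interior bound for the harmonic function $G_x$ completes the proof.
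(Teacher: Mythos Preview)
Your construction $G_x=\funsol_x-v_x$ and the verification of properties 1--3 are fine, but the argument for the pointwise bounds in property~4 has two genuine gaps, and the paper's route differs precisely in order to avoid them.

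First, the size bound $|G(x,y)|\le C|x-y|^{2-n}$ must hold uniformly in $x$, in particular for $x$ arbitrarily close to $\partial\Omega$. Your plan is to bound $v_x$ in $L^\infty$ by Moser iteration, using that the data $\funsol_x|_D$ and $\partial\funsol_x/\partial\nu|_N$ are of order $|x-y|^{2-n}$ ``on the relevant scale.'' But when $d=\dist(x,\partial\Omega)\ll|x-y|$, these data are of size $d^{2-n}$ on the portion of $\partial\Omega$ nearest $x$, and the energy of $v_x$---hence any $L^2$ average that feeds a local Moser step near $y$---inherits this blow-up. You end up with $|v_x(y)|\lesssim d^{2-n}$, not $|x-y|^{2-n}$, and the cancellation with $\funsol_x$ that would be needed is exactly what you are trying to prove. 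The paper reverses the order of the construction: one first builds, by reflection in a coordinate cylinder containing $x$, a local approximation $G_0$ that already satisfies the correct mixed boundary conditions there. This $G_0$ is the fundamental solution of a divergence-form operator with bounded measurable coefficients (the reflected operator), so the Littman--Stampacchia--Weinberger/Kenig--Ni bounds give $|G_0(x,y)|\le C|x-y|^{2-n}$ directly. The correction $h=G-\eta G_0$ then solves a mixed problem whose data are supported away from the pole and are uniformly bounded in $x$; one energy estimate plus a Moser bound makes $h\in L^\infty$ uniformly.

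Second, the H\"older continuity at points of $\crease$ cannot be obtained by ``combining reflections on each half.'' Odd reflection across $D$ and even reflection across $N$ are incompatible along $\crease$, so there is no single reflected equation on which to run De~Giorgi--Nash--Moser, and a Morrey-type argument has no input at those points. What is actually used---and what the paper invokes through \cite{LCB:2008} (and \cite{DK:1987} in the pure cases)---is a single bi-Lipschitz change of variables flattening both $\partial\Omega$ and $\crease$, followed by one reflection that converts the mixed condition into an interior divergence-form equation with bounded measurable coefficients on the doubled region. Only then does De~Giorgi--Nash--Moser yield H\"older continuity across $\crease$ with an exponent depending only on $M$ and $n$. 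Your proposal identifies the right ingredients away from $\crease$ but is missing this mixed reflection, which is the key device both for the boundary H\"older estimate and for producing the $G_0$ in the size bound above.
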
 
\note
{ Construction of a Green function. 

1. We begin by recalling that an  elliptic operator with bounded measurable
coefficients has a Green function in all of $ \reals^n$. This is
proven by Littman, Stampacchia and Weinberger \cite{LSW:1963} for dimensions $n \geq
3$. The details when $n=2$ may be found in Kenig and Ni \cite{MR87f:35065}.

2. (Moser, \cite{JM:1961})  If $u$, defined in $\ball x {2r}$,  is a solution of an elliptic
operator with bounded measurable 
coefficients, then $u$ is H\"older continuous and satisfies the
estimates below. 

\begin{eqnarray*} 
|u(x) | & \leq & \frac 1 { r^n} \int _ {\ball x {2r}} |u(y) | \, dy \\
|u(y) -u(z) |  &\leq &  C ( |y-z|/r) ^ \beta \sup _{\ball x {2r}}
|u(y)|, \qquad y, z \in \ball x r.
\end{eqnarray*}

3.  We cover $ \partial \Omega$ by a collection of coordinate
cylinders $\{ Z_i\}_{ i =1, \dots, N}$,  with  $ Z_ i = \cyl {x_i} {
  r_i}$
and we assume that for each $i$, $4Z_i = \cyl { x_i} {4r_i}$ is also a
coordinate cylinder. We also assume that each coordinate cylinder
satisfies one of the following case a) $ 4Z_i \cap \partial
\Omega\subset D$, b) $ 4Z_i \cap \partial
\Omega\subset N$, c) $ \crease \cap 4Z_i$ is given as a graph as in
the definition for cylinders centered at a point in $ \crease $. 

4. Fix $x$ and suppose that $x$ lies in one of the cylinders $Z_i$.
Using the reflection argument as discussed Dahlberg and Kenig (for the
pure Neumann or Dirichlet case) or in Lanzani, Capogna and Brown
\cite{LCB:2008}, we can construct a first approximation to the Green
function $G_0(x,y)$ which satsfies $ \Delta_y G_0(x,y) = \delta_x$n
for $ y \in 4Z_i$ (and with $\delta_x$ denoting the $\delta$-function),  
$G_0(x, y ) = 0$ for $y \in  D \cap  4Z_i$,  and  $\partial
G_0(x,y) /\partial \nu_y =0$ for $y \in N \cap  4Z_i$. 
Since $G_0$ is not defined in all of $ \Omega$, we need to introduce a
cut-off function $ \eta$ which is one on $ 2Z_i$ and zero outside
$4Z_i$. 

We note that $ G_0$ vanishes on $D\cap 4Z_i$. Thus, we have 
$$
\left\{ \begin{array}{ll} \Delta_y h(x,y) = \Delta_y \eta(y) G_0(x,y) , \qquad & \mbox{in }
  \Omega\\
h(x,y) = 0,  \qquad  &y \in D \\
\partial h(x,y) / \partial \nu_y = \partial\eta (y)  G_0 (x,y)
/\partial \nu,  \qquad & y \in N . 
\end{array}
\right. 
$$

5.  We can estimate  
$$
\|  \partial
\eta   G_0(x,\cdot )/ \partial \nu \|_ { W_D^ {-1/2,2} (\partial \Omega) } +
 \|\Delta
\eta G_0(x,\cdot)\|_{ H^ { -1}( \Omega)} \leq C
$$
where the constant is independent of $x$. This is because the data for
this mixed problem is zero. 
Thus, the solution $ h(x,\cdot)$ to the boundary value problem in
4. satisfies 
$$
\|h(x, \cdot ) \| _ {L^2 ( \partial \Omega) } + \| \nabla h(x,\cdot)
\|_{ L^2 ( \partial \Omega)} \leq C. 
$$
(Check details.)

6. We may define the Green function $G(x,y) = h(x,y) + G_0(x,y)$. 
The pointwise estimates of the Lemma follow from the estimates for
$G_0$, the estimate for $h$ in 5. and the boundedness of solutions in
2. 
The H\"older continuity follows from the upper bounds for the
fundamental solution and the estimate in 2. 

This construction give $G$ for $x$ in a coordinate cylinder. For $x$
in the interior, we may let $G_0$ be $ \eta \funsol$ where $\eta $ is
smooth function which is one in a neighborhood of $x$ and zero near
the boundary.

7. We now turn to the representation formula. We write $ G(x,y) =
\funsol (x-y) - g(x,y)$ where $\funsol$ is the free space fundamental
solution  and $g$ is defined by this equation.  

We let $u$ be a weak solution with Neumann data $f_N$ and $f_D=0$. 
We fix $x$  and let $ \eta$ be a cut-off function which is one in a
neighborhood of the boundary and 0 in  neighborhood of $x$. 
As $u$ lies in $W^ {1,2}_D ( \Omega)$ and $ \eta G(x,\cdot) $ also
lies in $W^ { 1,2}_D( \Omega)$ we may apply the weak formulation of
the  mixed problem to obtain that 
\begin{eqnarray*}
\langle f _N , G(x, \cdot)\rangle_ {\partial \Omega}  & =&  \int _ \Omega \nabla u \cdot
\nabla ( \eta G(x, \cdot))\, dy\\
0 = \langle u , \partial G(x,\cdot ) / \partial \nu \rangle _{
\partial \Omega }  
& = & \int_{ \Omega } \nabla u \cdot \nabla ( \eta G(x, \cdot) )+ u
\Delta ( \eta G( x, \cdot) )\,d y . 
\end{eqnarray*}
Subtracting these expressions gives that 
\begin{eqnarray*}
\langle f _N , G(x, \cdot)\rangle  & = & - \int_ { \Omega} 2 \nabla \eta
\cdot \nabla G(x,\cdot) 
  + \Delta \eta G(x,\cdot)\, dy \\
& =  &  \int_ { \Omega} 2u \nabla  (1-\eta) 
\cdot \nabla G(x,\cdot) +u \Delta( 1- \eta) G(x,\cdot)\, dy. 
\end{eqnarray*} 
Since the function $\nabla( 1- \eta)$ is supported away from $x$ and
the boundary, we may integrate by parts in the first  term in the
integral below to obtain, 
\begin{eqnarray*}
 \int_ { \Omega} 2u \nabla  (1-\eta)  
\cdot \nabla G(x,\cdot) +u \Delta( 1- \eta) G(x,\cdot)\, dy
& = & - \int_ \Omega 2\nabla u \cdot \nabla ( 1-\eta) G(x, \cdot) \\
& & \qquad  + u
\Delta (1-\eta) G(x, \cdot) \, dy  \\
& = & - \int _ \Omega G(x, \cdot) \Delta ( u ( 1-\eta))\, dy.
\end{eqnarray*}
From the standard properties of a fundamental solution, we obtain that 
$$u(x) =  -  \langle   f_N,  G(x, \cdot) \rangle_{\partial \Omega} .$$
}

\begin{lemma}  \label{Energy}
Let $u$ be a weak solution of the mixed problem  (\ref{WeakMix}) with Neumann data $f$
in $L^ {p}(N)$  where $p = (2n-2)/n  $ for $ n\geq 3$.  Then we have the
estimate 
$$
\int _{\Omega } |\nabla u | ^2 \, dy \leq  C\| f\|^2_ {L^ p ( N)} .
$$

If $n =2$, we have 
$$
\int _{\Omega } |\nabla u | ^2 \, dy \leq  C\| f\|^2_ {H^1 (N)} .
$$

In each case, the constant $C$ depends on $\Omega$ and the constant in
(\ref{coerce}).  
\end{lemma}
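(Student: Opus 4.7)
The plan is to use $u$ itself as the test function in the weak formulation. Since $f_D=0$, the solution satisfies $u \in W^{1,2}_D(\Omega)$, so it is admissible. This gives
$$
\int_\Omega |\nabla u|^2\,dy = \langle f_N, u\rangle_{\partial\Omega} = \int_N f\, u\,d\sigma.
$$
The whole proof is then a matter of bounding the boundary pairing on the right and absorbing a factor of $\|\nabla u\|_{L^2(\Omega)}$ on the left.

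For $n \geq 3$, I would apply H\"older's inequality with exponents $p = 2(n-1)/n$ and $p' = 2(n-1)/(n-2)$, so that
$$
\int_N f\, u\,d\sigma \leq \|f\|_{L^p(N)}\,\|u\|_{L^{p'}(\partial\Omega)}.
$$
The exponent $p'$ is exactly the Sobolev trace exponent: the trace operator maps $W^{1,2}(\Omega)$ into $W^{1/2,2}(\partial\Omega)$, which embeds into $L^{2(n-1)/(n-2)}(\partial\Omega)$. Combined with the coercivity hypothesis (\ref{coerce}) this yields $\|u\|_{L^{p'}(\partial\Omega)} \leq C\|\nabla u\|_{L^2(\Omega)}$ with $C$ depending on $\Omega$ and the constant in (\ref{coerce}). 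Substituting and cancelling one factor of $\|\nabla u\|_{L^2(\Omega)}$ gives the claimed estimate.

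For $n = 2$, the Sobolev exponent $p'$ degenerates to $\infty$ and the pointwise bound fails, so one must replace H\"older with $H^1$--BMO duality. Since $u \in W^{1,2}_D(\Omega)$, its trace vanishes on $D$, and for any atom $A$ for $\partial\Omega$ whose restriction $a = A|_N$ is an atom for $N$, we have
$$
\int_N a\, u\,d\sigma = \int_{\partial\Omega} A\, u\,d\sigma,
$$
since $u = 0$ on $D$. Thus for $f = \sum\lambda_j a_j \in H^1(N)$ we can extend to the corresponding atomic sum $\tilde f = \sum\lambda_j A_j \in H^1(\partial\Omega)$ and obtain $\int_N f\, u\,d\sigma = \int_{\partial\Omega}\tilde f\, u\,d\sigma$ with $\|\tilde f\|_{H^1(\partial\Omega)} \leq C\|f\|_{H^1(N)}$. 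Pairing with BMO and using the embedding $W^{1/2,2}(\partial\Omega)\hookrightarrow \mathrm{BMO}(\partial\Omega)$ that holds when $\partial\Omega$ is (locally) one-dimensional, together with (\ref{coerce}), gives
$$
\int_N f\, u\,d\sigma \leq C\|f\|_{H^1(N)}\,\|u\|_{\mathrm{BMO}(\partial\Omega)} \leq C\|f\|_{H^1(N)}\,\|\nabla u\|_{L^2(\Omega)},
$$
and absorbing $\|\nabla u\|_{L^2(\Omega)}$ finishes the proof.

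I expect the main technical point to be the $n=2$ case, specifically justifying the BMO embedding for the trace and handling the extension of an $H^1(N)$ function to an $H^1(\partial\Omega)$ function via the vanishing of $u$ on $D$. In higher dimensions the argument is a clean application of Sobolev embedding plus coercivity, with no real obstacle beyond bookkeeping of exponents.
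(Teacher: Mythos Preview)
Your argument is correct and is essentially the same as the paper's. The paper phrases it abstractly---using the embedding $W^{1/2,2}_D(\partial\Omega)\subset L^{2(n-1)/(n-2)}(\partial\Omega)$ (respectively $\subset \mathrm{BMO}(\partial\Omega)$ when $n=2$) and passing to the dual to get $L^{2(n-1)/n}\subset W^{-1/2,2}_D$ (respectively $H^1\subset W^{-1/2,2}_D$), then invoking the energy bound $\int_\Omega|\nabla u|^2\leq C\|f\|_{W^{-1/2,2}_D}^2$---but unwinding that duality is exactly your test-function computation followed by H\"older (or $H^1$--BMO) and the trace embedding.
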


\begin{proof} When $ n \geq 3$, we use that $ W^ { 1/2, 2}_D( \partial
  \Omega ) \subset L^ { 2(n-1) /( n-2)} ( \partial
  \Omega)$.  By duality, we see that $ L^ {  2(n-1)/n}(
\partial \Omega) \subset W^ { -1/2, 2}_D( \partial  \Omega ) $ and
since the  weak  solution of the mixed problem
satisfies 
$$
\int_\Omega |\nabla u |^2 \, dy \leq C  \| f\|^2 _ { W^ { -1/2,2}_D( \partial
  \Omega)}
$$
the Lemma follows. 

When $ n=2$, the proof above fails since  we do not have  $
W^ { 1/2, 2}_D(\partial \Omega ) \subset L^ \infty( \partial
\Omega)$. However, we do have the embedding 
$W^ { 1/2, 2}_D(\partial \Omega ) \subset BMO(\partial \Omega)
$. Since $ \phi \in W^ { 1/2, 2}_D( \partial \Omega)$ vanishes on $D$
and $ f \in H^1(N)$ has an extension $\tilde f$ which lies in $H^1(
\partial \Omega)$, we obtain the result for $n=2$. 
\end{proof}

Finally, we give  a technical lemma that will be used below.
\begin{lemma}
\label{DorN}
 Let $ \Omega $, $N$ and $D$  be a standard domain for
  the mixed problem and suppose that $0 < r < r_0$,  $ x \in \partial
  \Omega$
and  $   \delta (x) > r \sqrt { 1+M^2}$. Then we have $ \sball x r
\subset N$ or $ \sball x r \subset D$. 
\end{lemma}

\begin{proof} 
We fix $ y \in \sball x r $. Since $ r< r_0$, we may find a coordinate
cylinder $Z$ which contains $ \sball x r$. We let $ \phi $ be the
function whose graph gives $ \partial \Omega$ near $ Z $. Since $ y
\in\sball x r$, we have $|x'-y'| < r$. We let $ x'(t) = ( 1-t) x' + t
y '$ and then $ \gamma (t) = ( x' (t) , \phi (x' (t)))$ gives a path
in $ \partial \Omega$ joining $x$ to $y$ and of length at most $
r\sqrt { 1+ M^ 2}$. Since $ \delta (x )> r\sqrt { 1+M^2}$ and $
\delta$ is Lipschitz with constant one, we have that $ \delta(
\gamma(t) ) >0$ for $ 0 \leq t \leq 1$. Since $ \gamma(t)$ does not
pass through $ \Lambda $ we must have $x$ and $y$ both lie in $D$ or
both lie in $N$. As $y$ is an arbitrary point in $\sball x r$, it
follows that $ \sball x r $ lies entirely in $D$ or entirely in $N$.
\end{proof}

\begin{proof}[Proof of \ref{AtomicTheorem}]
It suffices to restrict attention to atoms which are supported  in a surface
ball $\sball x r$, with $ x\in\partial \Omega$ and $0<  r< r_0$ since
an atom which  is supported in  a larger surface
ball can be sub-divided into a finite number of atoms which are
supported  in
balls of the form $ \sball x { r_0}$. The increase in the constant due
to this step  will depend on the global character of the domain. 

Thus, we fix an atom $a$ that is  supported in  the set $ \sball x r
\cap N$ and begin the proof of (\ref{LocalPart}). We consider two cases: 
a) $ \delta (x) \leq  16r \sqrt { 1+M^2}$, and b) $ \delta (x) > 16r
\sqrt { 1+M^2}$. 

In case a) we fix $ q$ between 1 and 2 and use H\"older's inequality
with exponents $ 2/q$ and $2 /(2-q)$ to find 
\begin{eqnarray*}
\left( \int _{ \sball x { 8r}} |\nabla u | ^ q\, d\sigma \right) ^ {
  1/q} 
& \leq&  \left( \int_ { \sball x { 8r}} |\nabla u |^ 2\delta^ { 1-\epsilon} \, d\sigma \right) ^
     { 1/2} \left ( \int _ { \sball x { 8r}} \delta  ^ { \frac { q (
       \epsilon -1)} { 2-q}}\,d \sigma \right) ^ { \frac {2-q}{ 2q}} \\
& \leq &  C r^ { (n-1) ( \frac 1 q -\frac 1 2 ) + \frac { \epsilon -1}2}
\left( \int_{ \sball x { 8r}}|\nabla u | ^2 \delta ^ { 1- \epsilon} \,
d\sigma \right)^ { 1/2} .
\end{eqnarray*}
The second inequality requires that $q$ and $ \epsilon$ satisfy $ q (
\epsilon -1) /( 2- q) > -1$ or $ q < 1 /( 1-\epsilon /2)$. Next, we
use Lemma \ref{Whitney} and our assumption that $ \delta(x) \leq 16r
\sqrt { 1+M^2}$  to bound the weighted $ L^ 2 ( \delta ^ {
  1-\epsilon } d\sigma )$ norm of $ \nabla u $. This gives us 
\begin{eqnarray*}
\left( \int _ { \sball x {8r} } |\nabla u | ^ q \, d\sigma \right) ^ {
  1/q} &  \leq &  C \left [ \left ( \int _ { \sball x { r} \cap N  }
  |a |^ 2  \delta ^ { 1-\epsilon} \,    d\sigma \right ) ^ { 1/2} \right. 
\\
& & \quad  + \left . \left ( \int _  { \dball x { 16r}} |
  \nabla u | ^ 2 \delta ^ { -\epsilon } \, dy \right ) ^ { 1/2} \right
] r ^ { ( n-1)( \frac 1 q - \frac 1 2  ) + \frac { \epsilon -1 } 2} .
\end{eqnarray*}
We estimate the integral over $ \dball x {16r}$ in this  last
expression with H\"older's inequality and obtain 
\begin{eqnarray*}
\lefteqn{ \left ( \int _  { \dball x { 16r}} |
  \nabla u | ^ 2 \delta ^ { -\epsilon } \, dy \right ) ^ {1/ 2 }
} \qquad
\\
& \leq  & 
C \left ( \int _ { \dball x {16r}}  |\nabla u | ^ p \, dy \right) ^ {
    1/p} \left ( \int _ { \dball x { 16r} } \delta ^ { - \epsilon p/(
    p-2)}\, dy \right ) ^ {  1/ 2  -  1/ p }   \\
& \leq &  C r ^ { n ( \frac 1 2 - \frac 1 p) - \epsilon / 2 }\left ( \int _
     { \dball x { 16r}}|\nabla u | ^ p \, dy \right ) ^ { 1/p} .
\end{eqnarray*}
The second inequality depends on our assumption on $ \Lambda $ and
holds when $ \epsilon p /( p-2) < 2 $ or $p > 2 / ( 1- \epsilon /2)$. 
Now we may use the three previous displayed equations and Lemma
\ref{RHEstimate} to obtain 
$$
\left ( \frac 1 { r^ { n-1}}\int _ { \sball x { 8r}}|\nabla u | ^ q \,
d\sigma \right ) ^ { 1/q} \leq C \left [ \left ( \frac 1 { r^ n } \int
  _ {\dball x { 32r}} |\nabla u |^ 2\, dy   \right ) ^ { 1/2} + r
  ^ { 1-n } \right ].
$$
In this last step, we have used the normalization of $a$, $ \| a \|_{
  L^ \infty } \leq 1/ \sigma ( \sball x r )$  to estimate
the term involving the Neumann data from Lemma
\ref{RHEstimate}. Finally, we may use the Lemma \ref{Energy} and the
normalization of the atom to obtain that $( r^ { -n} \int _ \Omega
|\nabla u |^ 2 \, dy  ) ^ { 1/2} \leq C r^ { 1-n}$ which gives
the estimate (\ref{LocalPart}). 

In case b), we use Lemma \ref{DorN} to conclude that  $ \sball x { 16r} \subset
N$. Next, 
we use H\"older's inequality, that $a$ is supported in $ \sball x r$ and Lemma
\ref{NeumannRegularity} to obtain 
\begin{eqnarray}
\lefteqn{ \left(  \frac 1 { r ^ { n-1}} \int _{ \sball x { 8r} } |\nabla u | ^ q \, d\sigma \right )
^ { 1/q} } \nonumber \\
& \leq  &C \left (\frac 1 { r ^ { n-1}} \int  _ { \sball x { 8r} } |\nabla u | ^ 2
\, dy \right ) ^ { 1/2}  \nonumber  \\
& \leq   & 
C \left [ \left ( \frac 1 { r^ { n-1}}  \int  _ { \sball x r \cap N }
  |a|^2\, d\sigma \right ) ^ { 1/2}
   + \left(\frac 1 { r ^ n } \int _ {
    \dball x { 16r}}|\nabla u | ^ 2 \, dy \right ) ^ { 1/2} \right]. 
\label{EasyCase}
\end{eqnarray}
Using the normalization of the atom $a$ and Lemma \ref{Energy},  the
right-hand side of (\ref{EasyCase}) may be estimated by $
\sigma(\sball x { 8r} ) ^ { -1} $ and we obtain (\ref{LocalPart}) in
this case. 

Now we turn our attention to the proof of the estimate 
(\ref{Decay}). Our first step is to observe that the solution $u$
satisfies the estimate
\begin{equation}
\label{AtomDecay}
|u(y) | \leq \frac { Cr ^ \beta  } { |x-y |^ { n-2+ \beta  } }, \qquad
|y-x| > 2r.
\end{equation}
To establish (\ref{AtomDecay}), we begin with the representation
formula in part 3) of Lemma \ref{Green} and claim that we may find
$\bar x $ in $\sball x r $ so that 
$$
u ( y ) = - \int _ { \sball x r \cap N } a(z) ( G(y,z) - G( y, \bar x)
) \, d\sigma 
.
$$
If $ \sball x r \subset N$, then we may let $ \bar x = x$ and use that
$a$ has mean value zero to obtain the above representation. If $\sball x r \cap D
\neq \emptyset $, then we choose $\bar x \in D \cap \sball x r$ and use
that $  G( y, \cdot ) $ vanishes on $D$. Now the estimate
(\ref{AtomDecay}) follows easily from the normalization of the atom
and the estimates for the Green function in part 4) of Lemma
\ref{Green}.  

We will consider three cases in the proof of (\ref{Decay}): 
a) $2^k r < r_0$ and $ \delta (x) \leq 2\cdot 2^k r\sqrt{ 1+M^2}$, b) 
$2^k r < r_0$ and $ \delta (x) >  2\cdot 2^k r\sqrt { 1+M^2}$, c) $2^k r
 \geq r_0$. The details are similar to the proof of (\ref{LocalPart}),
 thus we will be brief. 

We begin with case a) and use H\"older's inequality with
exponents $2/q$ and $ 2/(2-q)$ to obtain 
$$
\left( \int _ { \Sigma _k } |\nabla u | ^ q \, d\sigma \right ) ^ {
  1/q} 
\leq C \left( \int _ { \Sigma _k } |\nabla u |^2 \delta ^ { 1-
  \epsilon } \, d\sigma \right ) ^ {1/2} ( 2^ k r ) ^ { (n-1) ( \frac
  1 q - \frac 1 2 ) + \frac { \epsilon -1} 2 }.
$$
As in the proof of the estimate (\ref{LocalPart}),  this requires that
$ 1 < q <  1/ ( 1 - \epsilon /2)$.  
From Lemma \ref{Whitney} we have 
$$
\left( \int _ { \Sigma_k } | \nabla u | ^ 2 \delta ^ { 1-\epsilon } \, d\sigma \right) ^ { 1/2} 
\leq C \left ( \sum _ { j = k -1} ^ { k+1} \int _ { S_j } |\nabla u |
^ 2\delta ^ { -\epsilon } \, dy  \right) ^ { 1/2} 
$$
This estimate requires $ k \geq 2$ so that $ \sring {k-1} \cap \sball
x {r} = \emptyset$. 
Then H\"older and the reverse H\"older estimate in Lemma
\ref{RHEstimate} gives 
$$
\left( \int _ {S_k} |\nabla u | ^ 2 \delta ^ { -\epsilon } \, dy
\right) ^ {\frac 1 2 } \leq C \left ( \int _ { S_k } |\nabla u | ^ p \, dy
\right) ^ { \frac 1 p  } 
\leq C ( 2^ k r) ^ {- \frac {  \epsilon}  2} \left ( \sum _ { j = k -1} ^ {
  k+1} \int _{S_j} |\nabla u |^2 \, dy \right) ^ { \frac 1 2 } .
$$
Here we need $ k \geq 2$ so that $ \sball x r \cap \Sigma _ { k-1} =
\emptyset $ and the term in  involving the Neumann
data in Lemma \ref{RHEstimate} vanishes. Finally, from Caccioppoli and our estimate 
(\ref{AtomDecay}) for $u$, we obtain that 
$$
\left( \int  _{S_k} |\nabla u | ^ 2 \, dy \right) ^ { 1/2} 
\leq \frac C { 2^ k r} \left( \sum _ { j = k -1} ^ {k+1} \int_{S_j}
|u|^2 \, dy \right) ^ { 1/2} 
\leq C 2 ^ { -k\beta} (2^k r ) ^ {1 -n/2}. 
$$
Again, we need $ k\geq 2$ so that the data for the mixed problem is
zero when we apply Caccioppoli's inequality. 
Combining the four previous estimates gives 
$$
\left( \int _ { \Sigma _k } |\nabla u | ^ q \, d\sigma \right) ^ {
  1/q} \leq C 2 ^ { -k\beta} \sigma ( \Sigma _k ) ^ { -1/q'}
$$
for $k \geq 4 $. We need $k \geq 4$ in order to fatten up the set $ \sring k $
three times: once to apply Lemma \ref{Whitney}, once to apply Lemma
\ref{RHEstimate} and once to apply Caccioppoli's inequality. 

Now we consider case b). Since $ \delta (x) \geq 2( 2^ k r) \sqrt {
  1+M^ 2} $, we have $ \sball x {2 \cdot 2^ k r  } \subset N$  by 
Lemma \ref{DorN}. 
Hence, we may use Lemma \ref{NeumannRegularity},
Caccioppoli's inequality and (\ref{AtomDecay}) to obtain
(\ref{Decay}). 

Finally, we consider case c) where $ 2^k r > r_0$. We recall that we
have a covering of $ \partial \Omega$ by coordinate cylinders.  In each coordinate
cylinder, we may use Lemma   \ref{NeumannRegularity}, Lemma
\ref{DirichletRegularity}
or Lemma \ref{Whitney} and the techniques
given above to obtain
$$
\left( \int _ { Z_ { r_0} \cap \partial \Omega } |\nabla u |^ q \, d\sigma\right)^ { 1/q} 
\leq C r_0 ^ { (1-n)/q'}.
$$
Adding these estimates gives (\ref{Decay}) with a constant that depends
on the global character of the domain. 
\end{proof}

We now show that the non-tangential maximal function of our weak
solutions  lies in $L^1$ when the Neumann data is an atom.

\begin{theorem} \label{HardyTheorem}
Let $ \Omega$, $N$ and $D$ be a standard domain for the mixed
problem. 
If  $ f_N$ is in $ H^1 (N)$, then there exists $u$ a solution of the $L^1$-mixed
problem (\ref{MP}) with Neumann data $f_N$ and zero Dirichlet data  and this
solution satisfies 
$$
\|\nontan{ (\nabla u)  } \|_{L^1( \partial \Omega)}  \leq C \|f_N\|_
{ H^1 (N)}. 
$$
The constant $C$ in this estimate depends on the global character of $
\Omega$, $N$ and $D$. 
\end{theorem} 

\note
{
Theorem restated to give existence in the Hardy space, rather than for
an atom.

Check  to make sure the proof proves the theorem. 
}

\begin{proof} 
We begin by considering the case when $f_N$ is an atom and we let $u$
be the weak solution of the mixed problem with Neumann data an atom
$a$ and zero Dirichlet data. The result for data in $H^1(N)$ follows
easily from the result for an atom.

We establish a representation for the gradient of $u$ in terms of the
boundary values of $u$.  Let $x \in \Omega$ and $j$ be an index
ranging from $1$ to $n$. We claim 
\begin{eqnarray} \label{RepFormula}
\frac {\partial u } { \partial x_j} (x) 
& = & 
\int _{ \partial \Omega }   \sum _{ i=1 } ^ n \frac { \partial \funsol
}{ \partial y_i } (x-\cdot)(  \nu _i \frac { \partial u }{ \partial y_j }  
- \frac { \partial u }{ \partial y _i } \nu _j  )
  \nonumber \\
& & \qquad + \frac {\partial \funsol }{\partial y_j } (
x-\cdot) \frac { \partial u }{ \partial \nu  } \, d\sigma .
\end{eqnarray}
If $u$ is smooth up to the boundary, the proof of (\ref{RepFormula})  is a straightforward
application of the divergence theorem. However, it takes a bit more
work to establish this result when we only have that $u$ is a weak
solution.

Thus, we suppose that $\eta $ is a smooth function
which is zero in a neighborhood of $ \crease $ and supported in a
coordinate cylinder. Using the coordinate system for our coordinate cylinder,
we set $ u_ \tau (y) = u(y+\tau e_n)$ where $e_n$ is the unit vector
the $x_n$ direction and $ \tau >0$. Applying the divergence theorem
gives 
\begin{eqnarray} 
\lefteqn{
\int _{ \partial \Omega }\eta  ( \frac { \partial \funsol }{ \partial \nu 
} (x- \cdot ) \frac { \partial u_\tau  }{ \partial y_j } - \nabla
\funsol (x-\cdot )
\cdot \nabla u_\tau   \nu _j  
 + \frac {\partial \funsol }{\partial y_j } (
x-\cdot ) \frac { \partial u_\tau  }{ \partial \nu  } ) \, d\sigma  } 
~~~~~~~~~~~~~~~~~~~~~~~~~~~~~~~~~
\nonumber\\
& =  &  \eta (x) \frac {\partial u_\tau  }{\partial x_j } (x) 
+\int _\Omega \nabla \eta  \cdot \nabla  \funsol(x- \cdot ) \frac {
  \partial u_\tau  }{ \partial y_j}   \nonumber \\
& & \qquad- \nabla _y \funsol (x-\cdot )  \cdot \nabla u_\tau  \frac {
  \partial \eta   } {\partial y_j}
\nonumber\\
& & \qquad + \frac { \partial \funsol }{ \partial y_j }
(x- \cdot )  \nabla u_\tau  \cdot \nabla \eta  \, dy . 
\label{dadgumidentity}
\end{eqnarray}
Thanks to the truncated maximal function estimate in Lemma
\ref{Whitney}, 
 we may let $ \tau $ tend to zero from above and conclude that the
same identity holds with $u_\tau$ replaced by $u$. 
Next, we suppose that $ \eta$ is of the form $ \eta \phi_ \epsilon$
where $ \phi_ \epsilon =0$  on    $\{ x: \delta (x) < \epsilon\}$, 
 $ \phi_ \epsilon =1$ on    $\{ x: \delta (x) >
2\epsilon \}$ and we have the estimate $ |  \nabla \phi_ \epsilon (x) |
\leq C/\epsilon $. Since we assume the boundary between $D$ and $N$ is
a Lipschitz surface, we have the following estimate for $ \epsilon $
sufficiently small
\begin{equation} \label{creasecollar}
|\{ x: \delta (x) \leq 2 \epsilon \}| \leq C \epsilon ^2.
\end{equation} 
Using our estimate for $ \nabla \phi_\epsilon$ and the inequality
(\ref{creasecollar}), we have 
$$
| \int _\Omega \eta   \nabla \phi_\epsilon    \cdot \nabla
\funsol(x-\cdot) \frac {   \partial u  }{ \partial y_j}  \,d y  |
\leq C \left ( \int_ { \{ y: \delta (y) < 2\epsilon \}
}   |\nabla u  | ^ 2\, dy \right ) ^ { 1/2} 
$$
and the last term tends to zero with $\epsilon$ since the gradient
of a weak solution lies in $L^2 ( \Omega)$.  Using this and similar
estimates for the other terms in (\ref{dadgumidentity}), gives 
\begin{eqnarray*} 
\lefteqn{ \lim _{ \epsilon \rightarrow 0^+}
\int _\Omega \nabla( \phi_\epsilon   \eta )  \cdot \nabla_y 
\funsol(x- \cdot ) \frac {   \partial u  }{ \partial y_j} 
- \nabla _y \funsol (x- \cdot) \cdot \nabla u  \frac { \partial (\phi_
  \epsilon \eta )   } {\partial y_j} }
~~~~~~~~~~~~~~~~~~~~~~~~~~~~~~~~~~~~~~~~~~~~~~~~~~~~~~~~~
& & \\
 + \frac { \partial \funsol }{ \partial y_j }
(x-\cdot )  \nabla u  \cdot \nabla(\phi_\epsilon  \eta )  \, dy  
&= & 
\int _\Omega \nabla \eta \cdot \nabla_y 
\funsol(x-\cdot ) \frac { \partial u  }{ \partial y_j} 
 \\
& & \qquad - \nabla_y \funsol (x- \cdot ) \cdot \nabla u  \frac { \partial \eta
} {\partial y_j} \\
& &\qquad + \frac {\partial \funsol }{ \partial y_j }
(x-\cdot )  \nabla u  \cdot \nabla \eta  \, dy .  
\end{eqnarray*}
Thus we obtain the identity (\ref{dadgumidentity}) with $ u_\tau$
replaced  by $u$  and
without the support restriction on $\eta$. 
Finally, we choose a partition of unity which consists of functions
that are either supported in a coordinate cylinder, or whose  support does
not intersect the boundary of $\Omega$. Summing as $\eta $ runs over
this partition gives us the  representation formula (\ref{RepFormula})
for $u$.     As we have $ \nabla u \in L^ q ( \partial
\Omega)$ for some $q>1$,  it follows from the theorem of Coifman, McIntosh and Meyer
\cite{CMM:1982}  that $ \nontan { ( \nabla u )} $ lies in $L^ q (
\partial \Omega)$. However, a bit more work is needed to obtain the
correct $L^1 $ estimate for $ \nontan { ( \nabla u )} $. 

We claim 
\begin{eqnarray*}
\int_{ \partial \Omega } \frac { \partial u}{\partial \nu } \, d\sigma
&=&0 \\
\int_{ \partial \Omega } \nu _j \frac{  \partial  u }{ \partial y_i }
- \nu _i \frac{  \partial  u }{ \partial y_j } \, d\sigma & = & 0. 
\end{eqnarray*}
Since  $ \nontan { ( \nabla u ) } $ lies in $L^q( \partial
\Omega)$, the proof of these two identities is a standard application
of the divergence theorem.   Using these results and the
estimates for $\nabla u$ in Theorem \ref{AtomicTheorem}, we can 
show that $\partial u /\partial \nu$ and $ \nu_j \partial u /\partial
y_i-\nu _i \partial u/\partial y_j$ are molecules on the boundary (see
\cite{CW:1976}) and
hence it follows from the representation formula (\ref{RepFormula})
that $\nontan{(\nabla u )}$ lies in $L^1 ( \partial \Omega)$ and
satisfies the estimate
$$
\| \nontan {(\nabla u) }\|_{L^1 ( \partial \Omega)}\leq C.
$$

Finally, the existence of non-tangential limits at the boundary
follows from the estimate for the non-tangential maximal
function. Once we know the limits exist it is easy to see that the
boundary data for the $L^1$-mixed problem must agree with the boundary
data for the weak formulation. 
\end{proof}

\section{Uniqueness of solutions} 
\label{Unique}

In this section  we establish uniqueness of solutions to the $L^1$-mixed
problem (\ref{MP}). We use the  existence result 
 established in section \ref{Atoms} and argue by duality that if  $u$ is a
solution of the mixed problem with zero Dirichlet and Neumann data,
then $u$ is also a 
solution of the regularity  problem with zero data and hence is zero. 
\begin{theorem} \label{uRuniq}
Let $ \Omega$, $N$ and $D$ be a standard domain for the mixed
problem. 
Suppose that $u$ solves the $L^1$-mixed problem (\ref{MP}) with data $ f_N =
0$ and $ f_D=0$. If $ ( \nabla u ) ^ * \in L^ 1 ( \partial \Omega)$,
then $u = 0$. 
\end{theorem}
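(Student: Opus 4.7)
The strategy is duality: I will show that the hypothesis $(\nabla u)^* \in L^1$, together with vanishing mixed boundary data, forces the non-tangential boundary values of $u$ to vanish \emph{on all of} $\partial \Omega$, not just on $D$. Once this is known, $u$ is a solution of the Dirichlet problem with zero data and non-tangential maximal function in a sensible $L^p$-space (obtained from $(\nabla u)^* \in L^1$ by integrating along non-tangential paths, using $u = 0$ on $D$), so Dahlberg's uniqueness theorem for the Dirichlet problem gives $u \equiv 0$ in $\Omega$.

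To produce test functions against which to pair $u$ on $N$, I will use Theorem~\ref{HardyTheorem}. For each atom $a$ for $N$, let $v = v_a$ be the solution it supplies: $v$ is harmonic in $\Omega$, has vanishing Dirichlet data on $D$, non-tangential limit $a$ for $\partial v/\partial \nu$ on $N$, and satisfies $(\nabla v)^* \in L^1(\partial \Omega)$. Applying Green's identity to the harmonic pair $(u,v)$ and using the boundary conditions ($u = 0 = v$ on $D$ and $\partial u/\partial \nu = 0$, $\partial v/\partial \nu = a$ on $N$), I expect
\[
0 \;=\; \int_{\partial \Omega}\!\left(u\,\frac{\partial v}{\partial \nu} - v\,\frac{\partial u}{\partial \nu}\right)d\sigma \;=\; \int_{N} \bar u \cdot a \, d\sigma,
\]
where $\bar u$ denotes the non-tangential boundary values of $u$. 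Rigorous justification requires a Verchota-type family of interior approximating Lipschitz domains $\Omega_\epsilon$; on each $\Omega_\epsilon$ Green's identity is classical, and the passage to the limit uses non-tangential convergence. The key point making this tractable is that for an atom $a$ the factor $\partial v/\partial \nu = a$ is \emph{bounded}, so the only boundary integrand that needs to be dominated is $u \cdot a$, which is controlled by $u^*$ (an $L^1$ function once one exploits $u=0$ on $D$).

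From $\int_N \bar u \cdot a\, d\sigma = 0$ for every atom $a$ for $N$ I will conclude $\bar u = 0$ on $N$ as follows. Given any small surface ball $\sball x r$ centered on $\crease$ and any bounded $g$ supported in $\sball x r \cap N$, I will build an atom $A$ for $\partial \Omega$ with $A|_N = g$ by setting $A = g$ on $\sball x r \cap N$ and $A = c$ on $\sball x r \cap D$, choosing the constant $c$ so that $\int_{\partial \Omega} A \, d\sigma = 0$. Property (\ref{SurfProp}) gives $\sigma(\sball x r \cap D) \gtrsim r^{n-1}$, which forces $|c| \lesssim \|g\|_\infty$, so $A$ is a multiple of an atom. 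Testing $u$ against all such $g$ yields $\bar u = 0$ on $\sball x r \cap N$, and covering $\crease$ by such balls handles a neighborhood of $\crease$ in $N$. On the remainder of $N$ the mean-zero atomic test gives only that $\bar u$ is locally constant on $N\setminus \crease$; connectedness, together with the already-established vanishing in a neighborhood of $\crease$, forces the constant to be $0$ on any component of $N \setminus \crease$ meeting $\crease$, while a component of $\partial \Omega$ lying entirely in $N$ would have $\bar u \equiv c$ and $\partial_\nu u = 0$, hence Cauchy data forcing $u \equiv c$ in $\Omega$ by unique continuation, and $c = 0$ by $u = 0$ on the nonempty set $D$.

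The main obstacle is the low-regularity justification of Green's identity: one has only $L^1$ control of the gradient maximal functions, which places the natural boundary pairing outside any classical $L^p/L^{p'}$ duality. This is overcome precisely because the test data is \emph{atomic} (hence bounded), so on the approximating boundaries the only quantity that must be dominated is $u$ itself, and $(\nabla u)^* \in L^1$ together with $u = 0$ on $D$ supplies a uniform $L^1$ majorant via a Poincar\'e/Hardy-type estimate along non-tangential approach regions terminating at $D$.
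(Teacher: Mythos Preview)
Your overall strategy matches the paper's: solve the mixed problem with atomic Neumann data to get a test function $v$, apply Green's identity on Verchota's approximating domains $\Omega_k$, pass to the limit to conclude $\int_N u\,a\,d\sigma=0$ for every atom, and then invoke uniqueness for the Dirichlet (regularity) problem. The difficulty, however, is precisely in the limit passage, and your justification of that step contains a genuine gap.

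You write that ``for an atom $a$ the factor $\partial v/\partial\nu = a$ is \emph{bounded}, so the only boundary integrand that needs to be dominated is $u\cdot a$.'' But Green's identity is applied on $\partial\Omega_k$, not on $\partial\Omega$, and on $\partial\Omega_k$ the normal derivative of $v$ is \emph{not} the atom $a$; it is only dominated by $(\nabla v)^*\circ\Lambda_k^{-1}$, and all you know about $(\nabla v)^*$ is that it lies in $L^1(\partial\Omega)$. Your Poincar\'e/Hardy argument gives at best $u^*\in L^1$, so on $\partial\Omega_k$ the integrand $u\cdot\partial v/\partial\nu$ is a product of two $L^1$ quantities with no common dominating function, and dominated convergence cannot be invoked. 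This is exactly the obstacle the paper isolates as ``the main obstacle'' and resolves with two additional lemmas: first (Lemma~\ref{Verchota}) the estimate
\[
\left|\int_{\partial\Omega_k} g\,\frac{\partial v}{\partial\nu}\,d\sigma\right|\le C_v\,\|g\|_{W^{1,1}(\partial\Omega_k)},
\]
uniformly in $k$, proved via generalized Riesz transforms and the H\"older continuity of $v$ (which follows from the Green-function estimates of Lemma~\ref{Green}); and second (Lemma~\ref{TechnicalMonstrosity}) an approximation of $u$ by Lipschitz functions $U_j$ with $\limsup_k\|u-U_j\|_{W^{1,1}(\partial\Omega_k)}\le C/j$. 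These two ingredients together allow one to replace $u$ by $U_j$ at a controlled cost, pass to the limit for the bounded $U_j$ using $(\nabla v)^*\in L^1$, and then let $j\to\infty$.

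A minor remark: once $\int_N u\,a\,d\sigma=0$ for all atoms $a$ for $N$, the conclusion $u=0$ on $\partial\Omega$ is simpler than your component-by-component argument. Since $u=0$ on $D$, one has $\int_{\partial\Omega}u\,A\,d\sigma=0$ for every atom $A$ for $\partial\Omega$; hence $u$ is constant a.e.\ on $\partial\Omega$, and the constant is $0$ because $D\ne\emptyset$.
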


Given a Lipschitz domain $ \Omega$, we may construct a sequence of
smooth approximating domains. A careful exposition of this
construction may be found in the dissertation of  Verchota
(\cite[Appendix A]{GV:1982}, \cite[Theorem 1.12]{GV:1984}). We will
need this approximation scheme and a few extensions.  Given a
Lipschitz domain $ \Omega$, Verchota constructs a family of smooth
domains $ \{ \Omega_k \}$ with $ \bar \Omega _k \subset \Omega$. In
addition, he finds bi-Lipschitz homeomorphisms $ \Lambda _k : \partial
\Omega   \rightarrow \partial \Omega _k$ which are constructed as
follows.

We choose a smooth vector field $ V$ so that for some $ \tau  =
\tau (M) >0$,  $ V\cdot \nu \leq -
\tau $ a.e.~on $\partial \Omega$ and define a flow $ f( \cdot ,\cdot ) :
\reals ^ n \times  \reals   \rightarrow \reals ^ n$   by 
$ \frac d { dt} f(x,t) = V(f(x,t))$, $f(x,0) = x$. One may find 
$ \xi > 0$ so that 
\begin{equation}\label{Gdef}
{\cal O} = \{ f(x, t) : x \in \partial \Omega , - \xi < t <
\xi \}
\end{equation} 
is an open set and the map $ (x,t) \rightarrow f(x,t)$ from $
\partial \Omega \times ( -\xi, \xi) \rightarrow {\cal O}$  is
bi-Lipschitz. 
Since the vector field $V$ is smooth, we have
\begin{equation} 
\label{DThing}
Df(x,t) = I _n + O(t)
\end{equation}
where $ I_n$ is the $n\times n$ identity matrix and $DF$ denotes the
derivative of a map $F$.  In addition, we have a Lipschitz function
$t_k (x)$ defined on $ \partial \Omega$ so that $ \Lambda_k (x) = f(x,
t_k ( x)) $ is a  bi-Lipschitz homeomorphism,  $ \Lambda _k : \partial
\Omega \rightarrow \partial \Omega _k$.  We may find  a collection of
coordinate cylinders $\{ Z _i \}$  so that each $ Z_i$ serves as a
coordinate cylinder for $ \partial \Omega$ and for each of the approximating
domains $ \partial \Omega_k$. If we fix a coordinate cylinder $Z$,   we have functions $
\phi$ and $\phi_k$ so that 
$\partial \Omega \cap Z =  \{ ( x',  \phi  ( x' )) : x' \in \reals ^ {
  n -1} \} \cap Z$ and 
$\partial \Omega_ k  \cap Z =  \{ ( x'  , \phi_k( x' )) : x' \in \reals ^ {
  n -1} \} \cap Z$. The functions $ \phi _k $ are $ C^ \infty $ and $
\|\pnabla \phi _k \| _ { L ^ \infty ( \reals ^ { n -1} )} $ is
bounded in $k$,  $ \lim _{ k \rightarrow \infty } \pnabla \phi _k (
x' ) = \pnabla \phi (x' )$ a.e.  and $ \phi_k$ converges to $\phi$
uniformly. Here we are using $ \pnabla$ to denote the gradient on
$\reals ^ {n-1}$. 

We let $ \pi: \reals ^ n \rightarrow \reals ^ { n-1}$ be the
projection $ \pi (x', x_n )  = x' $ and define $ S_k (x'
) = \pi(\Lambda_k (x' ,\phi (x')))$. 
According to Verchota, the map $S_k $ is bi-Lipschitz and has
a Jacobian which is bounded away from 0 and $ \infty$. 
We let $T_k$ denote $S_k^ { -1} $  and assume that both are defined in
a neighborhood of $ \pi(Z)$. 
 We claim  that 
\begin{equation}
\label{Dclaim} 
\lim _ { k \rightarrow \infty } DT_k (S_k(x') ) = I _ { n-1}, \qquad
\mbox{a.e.~in $\pi(Z)$}, 
\end{equation}
and  the sequence   $\| DT_k\|_ { L ^ \infty ( \pi (Z) )}$ is
bounded in $k$. 

To establish (\ref{Dclaim}), it suffices to show that $ DS_k $
converges to $ I_{ n-1}$ and that the Jacobian determinant of $ DS_k$
is bounded away from zero and infinity. The bound on the Jacobian is
part of Verchota's construction (see \cite[p.~119]{GV:1982}). As a
first step, we compute the derivatives of $t_k (x', \phi (x'))$.
We first observe that
\begin{eqnarray*} 
\lefteqn{\frac { \partial } { \partial x_i } f((x', \phi(x') ) , t_k (x',
\phi (x' ))) } ~~~~~~ \\
 &  = & 
\frac { \partial f}{ \partial x_i } ( (x', \phi (x' )), t_k (x' , \phi
(x')))
+\frac { \partial \phi }{ \partial x_i }( x' )   \frac { \partial f}{
  \partial x_n } (( x', \phi (x' )), t_k (x' , \phi (x')))  \\
& & \qquad 
+ V(f((x', \phi(x') ) , t_k (x',
\phi (x' ))))
\frac \partial { \partial x_i } t _k (x' , \phi (x' )). 
\end{eqnarray*} 
Since  
$ f((x', \phi  ( x' )), t_k (x', \phi (x')))$ lies in $ \partial
\Omega_k$, the derivative is tangent to $ \partial \Omega_k$ and  we have 
\begin{equation}\label{Tangential}
 \frac { \partial }{ \partial x_i } 
 f((x', \phi  ( x' )), t_k (x', \phi (x')))
\cdot \nu _ k (y) = 0, \qquad \mbox{a.e.~in }\pi (Z), 
\end{equation}
where $  y = ( S_k (x'), \phi_k ( S_k (x' )))$  and $ \nu _k $ is the
normal to $ \partial \Omega_k$. Solving  equation (\ref{Tangential})
for $ \frac \partial { \partial x_i } t_k $ gives 
\begin{eqnarray*} 
\frac \partial { \partial x_i } t _k (x' , \phi (x' ))
& = & - ( V(y) \cdot \nu _k (y) ) ^ { -1} \left ( \frac { \partial f }{ \partial
  x_i }(   ( x' , \phi ( x' )), t_k (x', \phi (x' )))  \right .  \\
& & \qquad 
+ \left. \frac { \partial \phi } { \partial x_i } (x' ) 
\frac { \partial f }{ \partial
  x_n }(   ( x' , \phi ( x' )), t_k (x', \phi (x' ))) \right)\cdot
\nu_k (y) .
\end{eqnarray*} 
Since $\lim _{ k \rightarrow \infty } t_k (x', \phi (x'))  = 0$
uniformly for  $x' \in  \pi 
(Z)$,  (\ref{DThing}) holds,   and $ \nu _ k ( y) $ converges
pointwise a.e.~and boundedly to $ \nu (x)$, we obtain that 
\begin{equation} 
\label{tderiv}
\lim _ { k \rightarrow \infty } 
\frac \partial { \partial x_i } t _k (x' , \phi (x' )) = 0, \qquad
\mbox{a.e.~in $ \pi (Z)$}.
\end{equation}
Given (\ref{DThing}),   (\ref{tderiv}), and recalling that $ S_k( x')
= \pi(f( (x', \phi(x')), t_k (x', \phi (x')))$,  (\ref{Dclaim})
follows.

\note
{
The proof of this Lemma is in RMB notebook 19, page 49.
}

\begin{lemma} 
\label{Verchota}
Let $ \Omega$, $N$ and $D$ be a standard domain for the mixed problem.
If $u$ is in $\sobolev 1 1 (\partial \Omega _k)$ and $w$ is the weak
solution of the mixed problem with Neumann data an atom for $N$ and
zero Dirichlet data, then we have
$$
\int _ { \partial \Omega _k } u \frac { \partial w }{ \partial \nu }
\,d\sigma \leq C_w \| u \|_{ W^ { 1,1 }( \partial \Omega _k )}.
$$
\end{lemma}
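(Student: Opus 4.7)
My plan is to partition $\partial\Omega_k$ into dyadic annuli around the support of the atom $a$ and combine H\"older and Poincar\'e--Sobolev on each annulus with the decay estimates for $\nabla w$ supplied by Theorem~\ref{AtomicTheorem}. A preliminary observation, used throughout, is that since $w$ is harmonic in $\Omega_k$ the divergence theorem gives $\int_{\partial\Omega_k}\partial w/\partial \nu\,d\sigma=0$, so adding a constant to $u$ does not change the integral being estimated; in particular the only content of the estimate is a bound by $\|\nabla_t u\|_{L^1(\partial \Omega_k)}$.

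Suppose $a$ is supported in $\sball{x_0}{r}$, set $\Sigma_j=\sball{x_0}{2^{j+1}r}\setminus\sball{x_0}{2^j r}$, and let $E_j^k=\Lambda_k(\Sigma_j)$ (with the innermost piece $E_0^k=\Lambda_k(\sball{x_0}{2r})$). On each $E_j^k$ I split
$$
\int_{E_j^k} u\,\frac{\partial w}{\partial \nu}\,d\sigma = \int_{E_j^k}(u-\bar u_j)\frac{\partial w}{\partial \nu}\,d\sigma + \bar u_j A_j,
$$
with $\bar u_j$ the average of $u$ on $E_j^k$ and $A_j=\int_{E_j^k}\partial w/\partial \nu\,d\sigma$. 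For the oscillating piece, H\"older with exponents $q,q'$ (where $1<q<p_0/2$ is the exponent from Theorem~\ref{AtomicTheorem} and $q'=(n-1)/(n-2)$ when $n\geq 3$, with the obvious replacement $q'=\infty$ in dimension $n=2$ via $W^{1,1}\hookrightarrow L^\infty$ in one boundary dimension), the scale-invariant Sobolev--Poincar\'e inequality $\|u-\bar u_j\|_{L^{q'}(E_j^k)}\leq C\|\nabla_t u\|_{L^1(E_j^k)}$, and the decay bound of Theorem~\ref{AtomicTheorem} transferred through the uniformly bi-Lipschitz map $\Lambda_k$ yield a bound of $C_w\,2^{-\alpha j}\|\nabla_t u\|_{L^1(E_j^k)}$, which sums geometrically to $C_w\|\nabla_t u\|_{L^1(\partial\Omega_k)}$.

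The mean-value pieces $\sum_j\bar u_jA_j$ are handled by Abel summation combined with $\sum_j A_j=0$: one rewrites the sum as $-\sum_{j\geq 1}(\bar u_j-\bar u_{j-1})S_{j-1}$ where $S_{j-1}=\sum_{i<j}A_i$. The partial sums are uniformly bounded, $|S_{j-1}|\leq\|\partial w/\partial \nu\|_{L^1(\partial\Omega_k)}\leq C_w$, using the pointwise bound $|\partial w/\partial \nu(y)|\leq \nontan{(\nabla w)}(\Lambda_k^{-1}(y))$ together with Theorem~\ref{HardyTheorem}, while Poincar\'e applied on $E_j^k\cup E_{j-1}^k$ gives $|\bar u_j-\bar u_{j-1}|\leq C(2^jr)^{2-n}\|\nabla_t u\|_{L^1(E_j^k\cup E_{j-1}^k)}$; for $n\geq 3$ the factor $(2^jr)^{2-n}$ is majorized by $r^{2-n}$ and absorbed into $C_w$, and for $n=2$ it equals one. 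Summing over $j$ with bounded overlap gives the desired bound. The chief obstacle is precisely this mean-value contribution: unlike in standard Hardy--$BMO$ duality the integrals $A_j$ do not vanish individually, only in total, and Abel summation combined with the uniform $L^1$ control of $\partial w/\partial \nu$ coming from Theorem~\ref{HardyTheorem} and Poincar\'e across consecutive annuli is what rescues the argument.
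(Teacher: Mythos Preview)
Your argument has a genuine gap in the exponent bookkeeping for the oscillating piece when $n\geq 3$. The Sobolev--Poincar\'e inequality from $W^{1,1}$ on an $(n-1)$-dimensional surface ball only yields control in $L^{q'}$ for $q'\leq (n-1)/(n-2)$, which forces the dual exponent $q\geq n-1$. But Theorem~\ref{AtomicTheorem} furnishes the $L^q$ decay of $\nabla w$ only for $1<q<p_0/2$, where $p_0$ is the Gehring exponent of Lemma~\ref{RHEstimate} and is in general barely larger than $2$. Thus $p_0/2$ is close to $1$, nowhere near $n-1$, and the two conditions $q<p_0/2$ and $q'=(n-1)/(n-2)$ that you invoke simultaneously are incompatible for $n\geq 3$. (In dimension $n=2$ the embedding $W^{1,1}\hookrightarrow L^\infty$ on a curve saves you, so the argument works there.) A secondary point: Theorem~\ref{AtomicTheorem} bounds $\nabla w$ on $\partial\Omega$, not $\nontan{(\nabla w)}$ on annuli, so the transfer to $\partial\Omega_k$ via $\Lambda_k$ also needs more justification for the $L^q$ piece than you give.

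The paper's proof avoids this mismatch entirely by a different mechanism: rather than pair decay of $\nabla w$ against oscillation of $u$, it invokes Verchota's generalized Riesz transform identity to shift the normal derivative from $w$ onto tangential derivatives of $u$. What is then needed is not $L^q$ control of $\nabla w$ but merely $L^\infty$ control of $w$ and its harmonic conjugates (Riesz transforms). This follows from the H\"older continuity of $w$, which in turn comes from the Green function estimates of Lemma~\ref{Green} together with the representation $w=-\langle G,a\rangle_{\partial\Omega}$ and the standard characterization that a harmonic function is $C^\alpha$ if and only if $\dist(\cdot,\partial\Omega)^{1-\alpha}|\nabla w|$ is bounded. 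The Riesz transform route thus trades quantitative decay for a qualitative regularity statement that is available at every exponent.
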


\begin{proof} 
This may be proven using generalized Riesz transforms as in
\cite[Section 5]{GV:1984}.  Also, see more recent treatments by Sykes
and Brown \cite[section 3]{SB:2001} and Kilty and Shen
\cite[section 7]{KS:2010}.  Verchota's argument uses square function
estimates to show that the generalized Riesz transforms are bounded
operators  on
$L^p ( \partial \Omega)$. In the proof of this Lemma, we need that the Riesz transforms
of $w$ are bounded functions.  From the estimate for the Green function in  Lemma
\ref{Green} and the representation of $ w= - \langle G,
a\rangle_{\partial \Omega} $, we
conclude that $ w$ is H\"older continuous. The H\"older continuity,
and hence boundedness,  of the Riesz transforms of $w$ follow from
the following characterization of H\"older continuous harmonic
functions. A harmonic function  $u$ in a Lipschitz domain  $\Omega$ is H\"older
continuous  of exponent $\alpha$, $0< \alpha < 1$,  if and  only if  $ \sup _{ x\in \Omega } \dist(x, \partial
\Omega )^ {1-  \alpha } |\nabla u (x) | $ is finite. 
\end{proof} 

We will need the following  technical lemma on
approximation of functions  with $ \nontan{(\nabla u )} $ in $L^ 1 (
\partial \Omega)$. The proof relies on the approximation scheme of
Verchota
outlined above. In our application,  we are
interested in studying  functions in Sobolev spaces on the family of
approximating domains.  Working with derivatives makes the argument fairly
intricate. 
\begin{lemma} 
\label{TechnicalMonstrosity}
Let $\Omega$, $N$ and $D$ be a standard domain for the mixed problem. 
If $ u $ satisfies $ \nontan { ( \nabla u ) } \in L^ 1 (
  \partial \Omega)$  and $ \nabla u$ has non-tangential limits a.e.~on
$ \partial \Omega$, then we may find  a sequence of Lipschitz
functions $ U_j$ so that 
$$
\lim _{ k\rightarrow \infty } \| u - U _j \| _ { \sobolev 1 1 (
  \partial \Omega _k )} \leq C /j.
$$
If the non-tangential limits of $u$ are zero a.e. on $D$, then we may
arrange that $U_j|_{\partial \Omega}$ is zero on  $ D$. 

The constant $C$ may depend on $  \Omega$ and $u$.  
\end{lemma} 

\begin{proof} To prove the Lemma, it suffices to consider a function
  $u$ which is zero outside one of the coordinate cylinders $Z$ as
  given in Verchota's approximation scheme. 
We have $ u( x' , \phi (x'))  \in \sobolev 1 1 ( \reals ^ { n
  -1})$, where we have set this function to be zero outside $ \pi (Z)$. Hence, there exists a sequence of Lipschitz functions $ u _j $
so that $\int _{ \reals ^ { n-1} } | \pnabla u (x', \phi (x' )) -
\pnabla u _ j ( x',  \phi (x' )) | \, dx' \leq 1/j$ where $ \pnabla $
denotes the gradient in $ \reals ^ { n -1}$. We extend $u_j$ to
a neighborhood of $ \partial \Omega$ by 
$$U _ j ( f(x,t) ) = \eta (
f(x, t) ) u _j (x) , \qquad x \in \partial \Omega$$
where $ \eta$  is a smooth cutoff function which is one  on  a
neighborhood of  $ \partial
\Omega$ and supported in  the set $ {\cal O} $ defined in
(\ref{Gdef}).  
If we have that $u$ is zero on $D$, then standard approximation
results for Sobolev spaces allow us to choose $u_j$ to be zero in 
a neighborhood of $\bar D$. 
This relies on our assumption on $ \Lambda $. 

We consider
\begin{eqnarray*} 
\lefteqn{ 
\int _ {\pi ( Z)} | \pnabla u (x' , \phi _k (x' )) - \pnabla U_j ( x',
\phi_k (x' ))| \, dx' } 
~~~~~~~~~~~~~~~~~~~~
& &  \\
& 
\leq & 
\int _ {\pi ( Z)} | \pnabla u (x' , \phi _k (x' )) - \pnabla u ( x',
\phi (x' ))| \, dx' \\
& & \qquad +
\int _ {\pi ( Z)} | \pnabla u (x' , \phi  (x' )) - \pnabla u_j ( x',
\phi (x' ))| \, dx'\\
& & \qquad +
\int _{ \pi ( Z)} | \pnabla u_j  (x' , \phi  (x' )) - \pnabla U_j ( x',
\phi_k (x' ))| \, dx'\\
& = & A_k +B +C_k.
\end{eqnarray*} 
We have that $ \lim _{ k \rightarrow \infty } A_k =0 $ since we assume
that $ \nontan {(\nabla u )} \in L ^ 1 ( \partial \Omega )$, $ \nabla
u $ has non-tangential limits a.e., and $ \pnabla \phi _k $ converges
pointwise a.e.~and boundedly to $ \pnabla \phi$. By our choice of $ u_j$,
we have $ B\leq C/j$. Finally, our construction of $U_j$ and our
definition of $T_k$ (before (\ref{Dclaim})) imply that  $ U_j (x', \phi _k ( x' )) = u _j
( T_k (x') , \phi ( T_ k (x')))$ and hence we have
\begin{eqnarray*} 
C_k & \leq  &
\int _ { \pi (Z)} | 
( I _ { n-1} - DT_
k ( x' ))  \pnabla u _ j ( x', \phi (x'))  | \, dx'  \\
& & \qquad  
+ \int _ { \pi (Z)}  | DT_k (x' ) ( \pnabla  u _ j ( x' , \phi(x' ))) -
\pnabla u _ j ( T_k (x' ), \phi ( T_k (x' )))| \, d x' 
\\
& = & C_ { k, 1 } + C_ {k,2 } .
\end{eqnarray*}
We have that $ \lim _ { k\rightarrow \infty } C_ { k,1 } = 0$ since $
\pnabla u _ j $ is bounded and  (\ref{Dclaim}) holds. 
  Since $ T_k ( x' )$
converges uniformly to $x' $,  $DT_k $ is bounded  and the Jacobian of
$S_k$ is bounded, we have that $ \lim _
{ k \rightarrow \infty } C_{ k,2} = 0$. 
\end{proof}

\begin{proof}[Proof of Theorem \ref{uRuniq}]
We let $ u$ be a solution of the $L^1$-mixed problem,  (\ref{MP}),
with $f_N = 0$ and $f_D=0$ 
and we wish to show that $u$ is zero. We fix $a$ an atom for $N$ and
let $ w$ be  a solution of the mixed problem with Neumann data $a$ and
zero Dirichlet data.  
Our goal is to show that 
\begin{equation}\label{AtomClaim}
\int _ { N } a u \, d\sigma =0.
\end{equation}
This implies that $u$ is zero on $ \partial \Omega$ and then Dahlberg
and Kenig's result for uniqueness of  solutions of the  regularity problem
\cite{DK:1987} implies  that $u=0$ in $ \Omega$. 

We turn to the proof of (\ref{AtomClaim}). Applying Green's second
identity in one of  the approximating domains  $ \Omega _k$ gives us 
\begin{equation}\label{uniq42}
\int _ { \partial \Omega _k } w \frac { \partial u }{ \partial \nu} \,
d\sigma =
\int _ { \partial \Omega _k } u \frac { \partial w }{ \partial \nu} \,
d\sigma, \qquad k =1,2\dots .
\end{equation}
We have  $ \nontan { ( \nabla u )} $ is in $L^1 ( \partial \Omega )$
while  $
w$ H\"older continuous and hence bounded. Recalling that $w$ is zero
on $D$ and $ \partial u /\partial \nu$ is zero on $N$, we may use the
dominated convergence theorem to obtain 
\begin{equation}\label{uniq43}
\lim _ { k \rightarrow \infty } \int _ { \partial \Omega _k } w \frac
     { \partial u }{ \partial \nu } \, d\sigma =0 .
\end{equation}
Thus, our claim will follow if we can show that 
\begin{equation}
\label{ClaimFollow}
\lim _{ k \rightarrow \infty } \int _ { \partial \Omega _k } u \frac { \partial w }{ \partial \nu }
\, d\sigma   =  \int _{\partial \Omega } ua \, d\sigma .
\end{equation}
Note that the existence of the limit in (\ref{ClaimFollow}) follows
from (\ref{uniq42}) and (\ref{uniq43}).
We let $U_j$ be the sequence of functions from Lemma \ref{TechnicalMonstrosity}
and consider
\begin{eqnarray}
\lefteqn{
\left | \int _{ \partial \Omega } ua \, d\sigma 
-\lim _{ k \rightarrow \infty}  \int _{ \partial \Omega _k } u \frac { \partial w }{ \partial \nu }
\, d\sigma \right | }  \label{U1}\\  & \leq & 
\left| \int_{ \partial \Omega }  ua \, d\sigma - \lim _{ k \rightarrow
  \infty }  \int _{ \partial
  \Omega _k } U_j \frac { \partial w }{ \partial \nu }  \, d\sigma
\right | 
 + 
\limsup _{ k \rightarrow \infty } \left | \int _ { \partial \Omega _k }  ( u -
U_j ) \frac { \partial w}{ \partial \nu } \, d\sigma \right| .
\nonumber
\end{eqnarray} 
Because we have that $ \nontan {( \nabla w ) } $ is in $L^ 1( \partial
\Omega)$ and $U_j$ is bounded, we may take the limit of the first term
on the right of  (\ref{U1}) and obtain 
$$
\left |  \int_{ \partial \Omega }  ua \, d\sigma - \lim _{ k \rightarrow
  \infty }  \int _{ \partial
  \Omega _k } 
U_j
 \frac { \partial w }{ \partial \nu }  \, d\sigma
\right | 
=
\left |  \int _{ N  } ( u -U _j ) a 
\, d\sigma\right | 
\leq C/j .
$$
Here we use that $U_j|_{ D } =0$. 
According to Lemmata \ref{Verchota} and \ref{TechnicalMonstrosity},
the second term on the right of (\ref{U1}) is bounded by $ C_w/j$. As
$j$ is arbitrary, we obtain (\ref{ClaimFollow}) and hence the Theorem. 
\end{proof} 

\section{A Reverse H\"older inequality at the boundary} 
\label{BoundaryReverse}
In this section we establish an estimate in $L^p( \partial \Omega)$
for the gradient of a solution to the mixed problem.  This is the key
estimate that is used in section \ref{LpSection} to establish
$L^p$-estimates for the mixed problem.

\begin{lemma}
\label{newLocal}
 Let $ \Omega$, $N$ and $D$ be a standard domain for the
  mixed problem. Let $u$ be a weak solution of the mixed problem with
Neumann  data $f_N$ in $L^ \infty ( N)$  and zero Dirichlet data. 
Let $ p_0 >2$ be as in Lemma \ref{RHEstimate} and fix $ q$ satisfying
$ 1< q < p_0/2$. For $ x \in \partial \Omega $ and $ r$ with $ 0 < r
< r_0$ we have 
$$
\left ( \average _ { \sball x r } { \nontan {( \nabla u  ) _{cr} } }^ q
  \, d\sigma \right) ^ { 1/q} 
 \leq   C \left [ 
 \average _{ \dball x { 2r}   }   |  \nabla u  | 
\, dy 
 +  \|f_N\|_{ L^ \infty ( \sball x { 2r} \cap N)} \right].
$$
The constant $c=1/16$ and  $C$ depends on $M$, $n$ and $q$. 
\end{lemma}

\begin{proof}
We fix $ x\in \partial \Omega$ and $r$ with $0< r < r_0$. We claim
that we have 
\begin{equation}\label{LocalClaim}
\left ( \average _ { \sball x {4r}} |\nabla u |^ q \, d\sigma \right )
  ^ { 1/q} \leq C \left ( \average _ { \dball x {16r}} |\nabla u | \,
    dy + \| f_N\| _ { L^ \infty ( \sball x { 16r} \cap N)} \right).
\end{equation}
We will consider two cases: a) $ \delta (x) \leq 8r \sqrt { 1+M^2}$ ,
b) $ \delta (x)  > 8r \sqrt { 1+M^2} $.
We give the proof in case a). Since we assume $ 1 < q < p_0/2$, we may
choose $ \epsilon $ satisfying $ 2-2/q < \epsilon < 2 - 4 /p_0$. We
apply H\"older's inequality with exponents $ 2/q$ and $2/(2-q)$ to 
obtain 
\begin{eqnarray*}
\lefteqn{ \left ( \int _ { \sball x { 4r}} |\nabla u | ^ q \, d\sigma \right ) ^
  { \frac 1 q}  } 
\qquad 
\\
& \leq &
   \left ( \int _ { \sball x  { 4r} } |\nabla u
    |^ 2 \delta ^ { 1-\epsilon } \, d\sigma \right ) ^ { \frac 1 2 }
\left ( \int _ {\sball x { 4r} } \delta ^ { ( \epsilon -1)q /( 2-q)}\,
  d\sigma \right ) ^ { \frac 1 q - \frac 1 2 }  \\
& \leq  &
C r ^ { ( n-1) ( \frac 1 q - \frac 1 2 ) + \frac { \epsilon -1 }
  2 }\left ( \int _ { \sball x { 4r} } |\nabla u | ^ 2 \delta ^ {
  1-\epsilon } \, d\sigma \right ) ^ {1/2}
\end{eqnarray*}
where we use that $ q (\epsilon -1) / ( 2-q) > -1$ or $ 2 - \frac 2 q
< \epsilon $ which implies that the integral of $ \delta ^ { (
  \epsilon -1) q /( 2-q)}$ is finite.  Next, we use Lemma
\ref{Whitney} and our hypothesis that  $ \delta (x) \leq 8r \sqrt {
  1+M^2}$   to obtain 
\begin{eqnarray*}
\lefteqn { \left ( \int _ { \sball x { 4r} } |\nabla u | ^ 2 \delta ^ { 1-
  \epsilon } \, d\sigma \right ) ^ { 1/2 }  } \\
& \leq &  C
\left [ 
\left ( \int _ {\dball
      x {8r} } |\nabla u |^ 2 \, \delta ^ { - \epsilon } \, dy
    \right ) ^ { 1/2} 
+ 
 \left ( \int _ { \sball x { 8r}\cap N} |f_N |^ 2 \delta ^ { 1-
      \epsilon } \, d\sigma \right ) ^ { 1/2} 
\right ] \\ 
& \leq &  C \left [
 \left ( \int _ { \dball x { 8r} } |\nabla u |^ 2\delta
  ^ { - \epsilon } \, dy \right ) ^ { 1/2 } 
+
r ^ { \frac { n-
      \epsilon } 2 } \| f_N \| _ { L^ \infty ( \sball x { 8r }  \cap N
    ) }
  \right ] .
\end{eqnarray*}
To estimate $(\int _ {\dball x {8r}}|\nabla u | ^ 2 \delta ^ {
  -\epsilon } \, dy  ) ^ {1/2}$, we choose $p >2$, use H\"older's inequality
with exponents $ p/2 $ and $ p / (p-2)$, and Lemma \ref{RHEstimate}
to 
find
\begin{eqnarray*}
\lefteqn{
\left( \int _ { \dball x { 8r}}|\nabla u | ^ 2 \delta ^ { -\epsilon }
\, dy \right ) ^ { 1/2}  }  \\
& \leq  & 
\left( \int _ { \dball x { 8r}}\delta ^ {-  \epsilon  p / ( p-2)} \,
dy \right ) ^ { \frac 1 2 - \frac  1 p }\left ( \int _ { \dball x { 8r
  } }|\nabla u | ^ p \, dy \right ) ^ { 1/p }  \\
& \leq &  C r ^ { - \frac \epsilon 2  + \frac n 2 } \left [ 
 \average _ { \dball x { 16r } }|\nabla u | \, dy 
+ \left ( \average _ { \sball x { 16r} \cap N}|f_N | ^ { \frac { p _0 ( n-1)}
  n} \, d\sigma \right ) ^ { \frac n { p_0 ( n-1) } } \right ].
\end{eqnarray*}
Combining the two previous displayed inequalities   gives the estimate
$$
\left( \int _ { \sball x { 4r} } |\nabla u | ^ q \, d\sigma \right ) ^
     { 1/q} 
\leq C r ^ { ( n-1)/q} \left ( \average_ { \dball x { 16r}} |\nabla u | \, dy + \| f _N \|
_ { L^ \infty ( \sball x { 16r} \cap N )}\right ), 
$$
which gives the claim (\ref{LocalClaim}). 

Now we consider the proof of (\ref{LocalClaim}) in case b). Here, we
use  $ \delta (x) >8r \sqrt{ 1+M^2}$ and Lemma \ref{DorN} to conclude
that $ \sball x {8r}\subset N$ or that $ \sball x {8r} \subset
D$. Then we may use 
Lemma \ref{NeumannRegularity} or Lemma \ref{DirichletRegularity} to
conclude that 
$$
\int _ { \sball x { 4r}} |\nabla u | ^ 2 \, d\sigma \leq C \left( 
\int _ { \sball x { 8r} \cap N} |f_N |^ 2 \, d\sigma 
+ \frac 1 r \int_{\dball x { 8r} } |\nabla u | ^ 2 \, dy \right ).
$$
Next, Lemma \ref{RHEstimate} gives 
$$
\left ( \average  _ { \dball x { 8r}} |\nabla u | ^ 2 \, dy
\right ) ^ { 1/2} 
\leq
C \left (    \average _ { \dball x { 16r} }  |\nabla u
| \, dy + \| f\| _ { L^ \infty ( \sball x { 16r} \cap N) } \right). 
$$
Using the  two previous  estimates and H\"older's inequality,  we obtain 
the claim (\ref{LocalClaim}) in case b).

To obtain the estimate for the non-tangential maximal function, we
choose a cutoff function $ \eta $ which is 
one on $ \ball x { 3r}$ and supported in $ \ball x { 4r}$. 
By repeating the arguments in the proof of Theorem \ref{HardyTheorem},
we may show that for $z $ in $ \Omega$ and $j =1,\dots,n$, we have the
following representation for the derivatives of $u$: 
\begin{eqnarray*}
(\eta\frac {\partial  u}{\partial z_j}) (z) 
 &  = &  \int _ { \partial
    \Omega } 
\eta ( \frac {\partial \funsol }{ \partial \nu } ( z-\cdot ) \frac {
  \partial u }{\partial y _j } 
- \nu _j  \nabla_y \funsol (z- \cdot ) \cdot \nabla u
+ \frac { \partial \funsol }{\partial y _j }(z-\cdot) \frac  {
  \partial u }{ \partial \nu }  ) \, d\sigma \\
& & 
\qquad - \int _ { \Omega} \nabla \eta \cdot \nabla_y \funsol ( z-\cdot )\frac {
  \partial u }{ \partial y _j } 
- \frac { \partial \eta }{ \partial y _j } \nabla_y \funsol (
z-\cdot)\cdot \nabla u  \\
& & \qquad \qquad  \qquad + \nabla \eta \cdot \nabla u \frac { \partial
  \funsol }{ \partial y _j } ( z- \cdot ) \, dy .
\end{eqnarray*}
From this representation and the theorem of Coifman, McIntosh and
Meyer \cite{CMM:1982}, we obtain
$$
\left ( \average _ { \sball x r } { \nontan {( \nabla u  ) _{r} } }^ q
  \, d\sigma \right) ^ { 1/q} 
\leq C \left [ \average_{ \dball x { 4r}} |\nabla u | \, dy + 
\left( \average _ { \sball x { 4r}}|\nabla u | ^ q \, d \sigma \right) ^ {
  1/q}\right]. 
$$
From this estimate, the claim (\ref{LocalClaim})  and a covering
argument, we obtain  the Theorem. 
\end{proof}

\section{Estimates for solutions with data from   $L^p$, $p> 1$}
\label{LpSection}
In this section, we use the following variant of an argument developed
by Shen \cite{ZS:2007} to establish    $L^p$-estimates for elliptic problems in
Lipschitz domains. Shen's argument is based on earlier in work of
Caffarelli and  Peral   \cite{MR1486629}. 

As the argument depends on a  Calder\'on-Zygmund decomposition into 
dyadic cubes, it will 
be stated using  surface cubes  rather than  the surface balls $ \sball xr $
used elsewhere in this paper. 

Let $Q _0$ be a cube in the boundary and let $F$ be defined on $4
Q_0$. Let the exponents $ p $ and $q$ satisfy $ 1< p < q$.  Assume that for each $Q \subset Q_0$, we may find two functions
$F_Q$ and $R_Q$ defined in $2Q$ such  that 
\begin{eqnarray} 
\label{Shen1} 
|F|  & \leq &  |F_Q| + |R_Q | ,   \\
\average_{ 2Q} |F_Q| \, d\sigma &\leq & C \left( \average_{ 4Q} |f|^ p \, d\sigma
\right ) ^ {1/p},   \label{Shen2}  \\
\left ( \average_{2Q}  |R_Q|^ q \, d\sigma \right) ^ {1/q} &\leq & 
C \left [   \average _{ 4Q}  |F|\,d \sigma   + \left( \average_{4Q} |f|^ p \,
  d\sigma \right) ^ {1/p} \right]. \label{Shen3}
\end{eqnarray}
Under these assumptions, for $r$ in the interval $ ( p, q)$, we have
$$
\left( \average _{Q_0} |F|^ r \, d\sigma \right)^ { 1/r } \leq C \left[
\average _{4Q_0} |F|\, d\sigma + \left( \average_{4Q_0}  |f|^ r \, d\sigma
\right ) ^ { 1/r} \right ] .
$$ 
The constant in this estimate will depend on the Lipschitz constant
of the domain, the $L^p$ indices involved and the constants in the
estimates in the conditions (\ref{Shen2}--\ref{Shen3}).  The argument
to obtain this conclusion is more or less the same as in Shen
\cite[Theorem 3.2]{ZS:2007}. The main differences arise because the
last term in (\ref{Shen3}) require us to substitute the maximal
function  $M(|f|^p)^ {1/p}$
for  $M(f)$. We omit a detailed proof. Our
hypotheses hypotheses differ from Shen's in that Shen has $p=1$ in
(\ref{Shen2}) and (\ref{Shen3}) while we have $p>1$. We need to change
Shen's formulation because we begin with results in Hardy spaces,
rather than $L^p$-spaces.

In our application, we will let $ 4Q_0$ be a cube with sidelength
comparable to $r_0$.  We let $u$ be a solution of the mixed problem
with Neumann data $f$ in $L^p(N)$ and Dirichlet data zero. We define
$f$ to be zero in  $D$.  Since
$L^p(N)$ is contained in the 
Hardy space $H^1(N)$, we may use  Theorem
\ref{AtomicTheorem} to obtain a solution of the mixed problem with
Neumann data $f$ on $N$ and zero Dirichlet data on $D$.  
Let $F = \nontan{(\nabla u )}$ and given a cube $Q\subset Q_0$ and
with diameter $r$,  define $ F_Q$ and $R_Q$ as follows. 
We let $\bar f _{ 4Q} =0$ if  $ 4Q \cap D \neq \emptyset $ and $ \bar f _{ 4Q}
= \average _{ 4Q} f \, d\sigma $ if $ 4Q \subset N$. Set $g = \chi _{
  4Q} ( f-\bar f _{ 4Q})$ and $h = f-g$. As both $g$ and $h$ are
elements of the Hardy space $ H^ 1(N)$, we may  use Theorem
\ref{HardyTheorem} to find solutions of  the $L^1$-mixed problem
with Neumann data $g$ or $h$. We let $v$ be the solution with Neumann
data $g$ and $w$ be the solution with Neumann data $h$.  According to
the uniqueness result Theorem \ref{uRuniq} we have $ u = v+w$.   
We let  $R_Q = \nontan { ( \nabla w )}$  and $ F_Q =
\nontan {( \nabla v ) } $ so that (\ref{Shen1}) holds. We turn our
attention to  establishing (\ref{Shen2}) and  (\ref{Shen3}). 

To establish (\ref{Shen2}),  observe that the 
$H^1$-norm of $g$ satisfies the bound   $$ \| g \|_{ H^1 (N)} \leq C \| f
\| _ {L^p( 4Q)}  \sigma (Q) ^ { 1/p'}. $$  With this,  the estimate
(\ref{Shen2}) follows from Theorem \ref{AtomicTheorem}. 
Now we turn to the estimate (\ref{Shen3}) for $ F_Q = \nontan {(
  \nabla w ) } $. We note that the Neumann data $h$ is constant on $
4Q \cap N$.
We define a  maximal  operator by   taking the supremum over 
that part of the cone that is far from the boundary, 
$$
\nontan { ( \nabla w )_+}(x) = \sup _{ y \in \ntar x \setminus \ball x
  {Ar} }    |\nabla w (y)| 
$$
where $A$ is to be chosen.   

A simple geometric argument gives that
\begin{equation} \label{far} 
\nontan { ( \nabla w )_+}(x) \leq C \average   _{ 4Q}  \nontan { (
  \nabla w ) } \, d\sigma , \qquad x \in  2Q.  
\end{equation} 
The estimate for $ \nontan { ( \nabla w )_{Ar}  }  $ uses the local
estimate for the mixed problem in Lemma \ref{newLocal}  to conclude
that 
\begin{eqnarray} 
\nonumber 
\left ( \average _ { 2Q} {  \nontan { ( \nabla w ) _ { Ar}  }} ^ q \, d\sigma
  \right) ^ { 1/q} 
& \leq  & 
C
\left [  \|h\|_{L^ \infty ( 4Q)} 
+ \average _ { T( 3Q) } |\nabla w | \, d\sigma \right ]  
\\
& \leq & 
C \left [  \left( \average_{4Q} |f|\, d\sigma \right) 
  + \average _ {4Q} \nontan { ( \nabla w ) } \, d \sigma \right ].
  \label{near}
\end{eqnarray} 
provided that  the constant $A$ in the definition  of  $ \nontan{ (\nabla w )_+}
$ is chosen sufficiently  small. 
Recall that  $ T(Q)$ was defined at the beginning of the proof of
Lemma \ref{Whitney}. 
From the estimates (\ref{far}) and (\ref{near}), we conclude that 
\begin{equation}
\label{New3} 
\left( \average _{ 2Q} ( R_Q ) ^ q \, d\sigma \right) ^ { 1/q} 
 \leq   C \left[  \average _ {4Q}  |f| \, d\sigma 
 +  \left( \average _{ 4Q} \nontan { ( \nabla w ) }\, d\sigma \right) ^
{ 1/p } \right ].   
\end{equation}
We have $ \nontan {( \nabla w )} \leq \nontan { ( \nabla v ) } +
  \nontan {( \nabla u ) } $ and hence we may estimate the term involving
 $ \nontan { ( \nabla w )}$ by 
$$
\average _ { 4Q} \nontan {( \nabla w ) } \, d\sigma 
\leq 
\average _ { 4Q} \nontan {( \nabla u ) } \, d\sigma +
\average _ { 4Q} \nontan {( \nabla v ) } \, d\sigma 
\leq 
\average _ { 4Q} \nontan {( \nabla u ) } \, d\sigma +
C \left ( \average _ { 4Q} |f| ^ p \, d\sigma \right ) ^ { 1/p}   
$$
where we have used Theorem \ref{HardyTheorem} to estimate the term
involving  $\nontan{(\nabla v)}$. 
Combining this with (\ref{New3}) gives  (\ref{Shen3}). 

Applying the  technique  of Shen outlined above  gives the
$L^p$-estimate and thus we obtain the following theorem. 

\begin{theorem}
Let $ \Omega$, $N$ and $D$ be a standard domain for the mixed
problem and let $p$ satisfy $ 1 < p < p_0/2$ where  $p_0$ is  from  Lemma \ref{RHEstimate}. 

Given data $f_N$ in $L^p(N)$, we may solve the $L^p$-mixed problem with
Neumann data $f_N$ and Dirichlet data 0 and this solution satisfies 
the estimate
$$
\| \nontan {( \nabla u )} \|_{ L^ p ( \partial \Omega )} 
\leq C \| f _N \| _{ L^  p (\partial \Omega )} .
$$
The constant $C$ depends on the global character of the domain and the
index $p$.  
\end{theorem}

\section{Further  questions}
This work adds to our understanding of the mixed problem in Lipschitz
domains.  However, there are several avenues which are not yet
explored.
\begin{enumerate}
\item Can we study the inhomogeneous mixed problem and obtain results
  similar to those of Fabes, Mendez and M. Mitrea \cite{FMM:1998} and
I.~Mitrea and  M.~Mitrea \cite{MM:2007}?
\item Is there an extension to  $p < 1$  as the work of Brown \cite{RB:1995a}?
\item Can we study the mixed problem for more general decompositions
  of the boundary, $ \partial \Omega = D \cup N$? To what extent is
  the condition that the boundary between $D$ and $N$ be a Lipschitz
  graph needed?
\item Can we extend these techniques to elliptic systems and higher
  order elliptic equations?
\end{enumerate} 

\note
{ Index of notation.

\begin{tabular}{rl}

\sc Symbol &  Meaning \rm \\
$\delta(x)  $ & the distance from $x$ to the crease $\crease$  \\
$D$ & region where we specify Dirichlet data\\
$N$ & region where we specify Neumann data \\
$P$& $P$ operator \\
$\Xi$  & fundamental solution \\
$\ntar x  $ & non-tangential approach region\\
$\crease $ & boundary between $N$ and $D$ \\
$\Omega $ & domain \\
$ \locdom x r $ & domain of size $r$ near a boundary point $x$.  \\
$\dball x r $ & $\Omega \cap \ball x r $ \\
$T(Q) $ & in proof of Lemma \ref{Whitney}  
$T(Q)   = \{ x \in \bar \Omega : \dist(x, Q) < \diam(Q)\}  $
\end{tabular}
}

\def\cprime{$'$} \def\cprime{$'$} \def\cprime{$'$}

\end{bibunit}



\renewcommand{\thesection}{\Alph{section}}
\renewcommand{\thetheorem}{\Alph{section}.\arabic{theorem}}
\renewcommand{\theequation}{\Alph{section}.\arabic{equation}}
\newenvironment{example}[1][Example]{\begin{trivlist}\item[\hskip \labelsep
{\it #1. }]}{  \goodbreak \end{trivlist}}   

\newpage

\begin{bibunit}

\appendix

\section{Correction to the Proof of Theorem 7.7.}\label{Introduction}

In an earlier work \cite[Equation (7.4)]{MR3042705} we make a
technical claim about 
non-tangential maximal functions which is an essential step in our
study of the $L^p$-mixed problem. 
The example below shows that the statement (7.4) is  incorrect. 
 A correct substitute  for this claim may  be
found in \eqref{Far} below. 
This note   provides a proof of the main theorem of
\cite{MR3042705} which avoids the problematic claim. We will follow the
notation, equation references, and results from  our earlier work.
We thank L. Croyle for pointing out this error. Her
dissertation  \cite{MR3564069} includes a different approach to
correcting this error and the paper \cite{BC:2019}  includes a version of the
argument presented here. 
The
correction outlined here should also be applied to several subsequent
papers including \cite{MR3040944,MR3034453}
that make use of the method from  \cite{MR3042705}.

We begin with an example which shows that the claim (7.4) may fail. 

\begin{example} Consider the  domain  $\Omega$  in $ \reals ^2$ which lies
  above  the graph of the Lipschitz function $ \phi : \reals
  \rightarrow \reals $ given by $ \phi (x_1) = \max ( -2|x_1|, 2|x_1|
  -4)$. For this domain, we may find $ y \in \Gamma (0)$ with $|y|$
  large, and $  \epsilon = (1+\alpha ) \dist (y ,
  \partial \Omega) -|y|$  small. This requires that $ \alpha $ be
  sufficiently large. We claim that we have $ \{ x : y \in \Gamma (x),
  x\in \sball 0 { 1/2} \} \subset \sball 0 {C\epsilon } $. To see
  this, note that if $ x\in \sball 0 { 1/2} $, then we have $ |x- y|
  \geq |y| + c|x|$. This follows because the line segments that form
  the boundary near zero are not tangent to the boundary of the ball
  $\partial B_{|y|}(y)$. Thus $y \in \Gamma (x)$ will hold only if $|x|$
  is at most a multiple of $ \epsilon$.  If we let $ u (z) = \chi _ {
    B_ \epsilon (y)}(z)$, then we have $ u^ * (0) =1$, but $ \average
  _ { \sball 0 { 1/2}} u ^ * \, d\sigma \leq C \epsilon$. Thus (7.4)
  fails.
\end{example}

To avoid making use of (7.4) in Ott and Brown, we apply Shen's argument
outlined in (7.1-3) to  $M((\nabla u^*)^ {1/2})^2$ rather than to $\nabla u^*$. 
Here, $M$ is the Hardy-Littlewood maximal function which we define by
$$
M(f)(x) = \sup _{ s >0 } \average _ { \sball x s } |f|\, d\sigma .
$$
We will need several auxiliary maximal functions which we define
here. For these definitions we need a parameter $r$ which will
give a division between small and large scales.  In our applications, 
$r$ will be comparable to the sidelength of the cube $Q$ that appears
in Shen's argument outlined in (7.1-3).   We let
$$
M_ 0 (f) (x) = \sup _{0<s < r }  \average _{ \sball x s } |f| \, d\sigma,
\qquad
M_ \infty (f) (x) = \sup _{s \geq  r }  \average _{ \sball x s } |f| \, d\sigma.
$$
We also define  truncated non-tangential maximal functions by
$$
u^ \triangledown (x) = \sup _{y \in \Gamma (x), |x-y |< cr} | u(y) |,
\qquad
u^ \vartriangle (x) = \sup _{y \in \Gamma (x), |x-y |> cr} | u(y) |.
$$

We define $ F= M (( \nabla u ^ *) ^ { 1/2} ) ^2$ and   fix a cube
$Q$.
\comment{We let
$$
\bar f_ {  Q } = \left \{ \begin{aligned} & 0, \qquad &  Q \cap \bar D \neq \emptyset
  \\
  & \average _Q f \,d \sigma \qquad & Q \cap \bar D = \emptyset
\end{aligned}\right . 
$$
and put $ g = \chi _ { 4Q} ( f - \bar f _ { 4Q})$ and then  put $h =
f-g$. We let $v$ and $w$ solve the mixed problems
$$
\left \{\begin{aligned}
&\Delta v = 0 , \qquad & \mbox{in } \Omega\\
& v = 0 , \qquad & \mbox{on } D \\
& \frac { \partial v }{ \partial  \nu } =g , \qquad & \mbox{on } N \\
&\nabla v ^ * \in L^ 1 ( \partial \Omega)
\end{aligned}\right. 
\qquad
\left \{\begin{aligned}
&\Delta w = 0 , \qquad & \mbox{in } \Omega\\
& w = 0 , \qquad & \mbox{on } D \\
& \frac { \partial w }{ \partial  \nu } =h , \qquad & \mbox{on } N \\
&\nabla w ^ * \in L^ 1 ( \partial \Omega)
\end{aligned}\right.
$$
Since we have that $ L^p(N) \subset H^1(N)$ if $ p >1$, these
solutions exist thanks to Theorem 4.17.
}
Let $v$ and $w$ be defined as in section 7 of \cite{MR3042705}.  We 
set $ F_Q =
 M ((\nabla v ^*)^{1/2})^2$ and   $ R_Q =  M ((\nabla w ^*)^{1/2})^2$. 
  By uniqueness for the
$L^1$-mixed problem (Theorem 5.1), we have that $ u =v+w$ and thus it
  follows that we have (7.1). By Theorem 4.17, we have the estimate
$$
  \int_{ \partial \Omega } \nabla v ^ * \, d \sigma
  \leq C \|g\| _ { H^1 (N)} \leq \sigma ( 4Q ) ^ { 1-1/p}
  \left (\int _ { 4 Q} |f|^ p\, d\sigma \right) ^ { 1/p}.
$$
  From this and the Hardy-Littlewood maximal theorem, we obtain
\begin{equation}\label{7.2}
  \average _ {2Q}
  M((\nabla v ^*) ^ { \frac 1 2} ) ^2 \, d\sigma
  \leq C \left( \average _ {4 Q} |f|^p \, d\sigma
    \right) ^ { 1/p}
\end{equation}
which is (7.2).

Before estimating  $R_Q$, we give two technical lemmata.
  \begin{lemma} \label{Minf}
    Suppose that $x,y$ are in $ \partial \Omega$ and $ |x-y| < Ar$,
    then we have
    $$ M_\infty (f) (x) \leq C_A M_ \infty (f) (y).
    $$
  \end{lemma}
  \begin{proof} By the triangle inequality, we have $ \sball x s
    \subset \sball y { s+Ar}$. Thus it follows that
    $$
    \average _ { \sball x s } |f|\, d\sigma
    \leq  \frac { \sigma ( \sball y { s+Ar})}{\sigma (\sball x s )}
    \average_{ \sball y { s+Ar}} |f|\, d\sigma. 
    $$
    If we require that $s \geq r$, then
    we have a constant so that
    $ \sigma ( \sball y { s+Ar})/\sigma (\sball x s ) 
\leq C_A$ and the Lemma follows. 
  \end{proof}

  \begin{lemma} \label{Far} For $p>0$, we have 
    $$
    u ^ \vartriangle (x) \leq C M _ {\infty }( ( u^ * )^p)) ^
    { 1/ p }(x).
    $$
    The constant depends on the value of $p$ and the constant $c$
    entering into the definition of $ u ^ \vartriangle$. 
  \end{lemma}
\note{ If we truncate with $ \delta (y)$, rather than $|x-y|$, we
should   obtain an estimate with $ u ^ \vartriangle $ on the right.
  }
  \begin{proof} Fix $x\in \partial \Omega$ and suppose that $y \in
    \Gamma (x)$. Fix $ \hat y$ so that $ |y-\hat y| = d(y) = \dist(y,
    \partial \Omega)$ and observe that if $ |z-\hat y | <\alpha d(y)$, we
    have $y \in \Gamma (z)$. This implies $ |u(y) | \leq u^ * (z) $ 
    for $ z \in \sball { \hat y } { \alpha d(y)}$.
    By the triangle inequality $
    |x-\hat y| \leq |x-y | + |y - \hat y| \leq (2+ \alpha )
    d(y)$. Hence we have that $\sball { \hat y} {\alpha d (y)} \subset
    \sball x { ( 2 + 2 \alpha ) d(y)} $. It follows that
    $$
    |u(y) | \leq 
     \frac { \sigma ( \sball x { (2 + 2 \alpha )d(y)} )}
    { \sigma ( \sball {\hat y} { \alpha d(y) } ) } \average _ { \sball x { (
        2+ 2 \alpha ) d(y)}} u^ * \, d\sigma  . 
    $$
If we assume that $ |x-y| >cr$, then we have $ d(y)
> cr/(1+ \alpha)$ and   obtain
$$
u^ \vartriangle (x) \leq C M _ \infty ( u ^ *) ( x)
$$
which is our result with $ p=1$. To obtain the Lemma for other values
of $p$, apply the case with $p =1$ to $|u|^p$.
  \end{proof}
  
  To estimate $R_Q$, we will fix $x \in 2Q$ and $r > c \diam (Q)$ so that $ \sball x
  { 4r } \subset 4Q$. Our goal is to show that 
\begin{equation} \label{MainStep}
  \left (\average  _ { \sball x r } M ( ( \nabla w ^ * )^ {\frac  1 2
    } ) ^{2q} \, d\sigma \right ) ^ { 1/q}
  \leq
  C\left[ \average _{ \sball x {4r} } M (( \nabla u ^ * ) ^{ \frac 1 2 }
    )^2 \, d\sigma   + \left  ( \average _ { 4Q }  |f|^p \, d\sigma
    \right ) ^ { 1/p} \right]. 
\end{equation}
Covering $2Q$ by a finite collection of balls we may obtain (7.3) from
\eqref{MainStep}.

To obtain \eqref{MainStep}, we begin by observing that
  \begin{equation} \label{ZeroTerm}
  R_Q = M ((\nabla w ^*)^{\frac 12})^2
  \leq C ( M_ { \infty } (( \nabla w ^ * ) ^{ \frac 1 2} ) ^2
  +M_ { 0 } (( \nabla w ^ \triangledown ) ^{ \frac 1 2} ) ^2
    +M_ { 0 } (( \nabla w ^ \vartriangle ) ^{ \frac 1 2} ) ^2)
  \end{equation}
  and will proceed to estimate the three terms on the right of
  \eqref{ZeroTerm}. 

  From Lemma \ref{Minf}, we have
  \begin{equation}\label{OneTerm}
    \begin{aligned}
    \sup _ { y \in \sball x r } M_ \infty ( ( \nabla w ^  *)^ {\frac 12
    } ) ^ 2 (y) &\leq C \inf _ { y \in\sball x r }
    M_ \infty ( ( \nabla w ^  *)^ {\frac 12
    } ) ^ 2 (y) \\
&    \leq C \average _ {\sball x r } M _ \infty
    ( ( \nabla w ^  *)^ {\frac 12
    } ) ^ 2  \, d\sigma.
    \end{aligned}
  \end{equation}
  Next we observe that Lemma 6.1 and the Hardy-Littlewood maximal
  theorem gives that
\begin{equation} \label{TwoTerm}
\left (  \average _ { \sball x r} M_ 0(( \nabla w ^ \triangledown) ^ {\frac 1
    2} )^ { 2q} \, d\sigma \right ) ^ { 1/q}
  \leq C \left ( \average _ { \sball x { 4r} } \nabla  w ^ * \,
  d\sigma 
    + |\bar f _ { 4Q}|\right). 
\end{equation}

Finally, to estimate $ M_ 0 (( \nabla w ^\vartriangle) ^{ \frac 1
  2} ) ^ 2 $, we may use Lemma \ref{Minf} and Lemma \ref{Far} to see
that
$$
\nabla w ^ \vartriangle (y) \leq C M _ \infty (( \nabla w ^ * ) ^ {
  \frac 1 2 })^2 (z),\qquad
y \in \sball x { 2r}, \  z\in \sball x { 4r}.
$$
For $y\in \sball x r $, the maximal function $M_0(h)(y)$ depends only on
the values of $h$ in $ \sball x { 2r}$, thus we obtain
\begin{equation}\label{ThreeTerm}
\sup _ { y \in \sball x r } M_0 ( ( \nabla w^ \vartriangle)^{ \frac 1 2
})^2(y)
\leq C  \average _ { \sball x { 4r} }
  M ( ( \nabla w^*)^{ \frac 1 2
  })^2 \, d\sigma.
\end{equation}
  From (\ref{ZeroTerm}--\ref{ThreeTerm})  we conclude that
  $$
 \left (  \average _ { \sball x { 4r} }
  M ( ( \nabla w^*)^{ \frac 1 2
  })^{2q} \, d\sigma \right  ) ^ { 1/q}
  \leq 
  C\left[  \average _ { \sball x { 4r}} M((\nabla w ^ *) ^ {\frac 1 2}  )^2
      \, d\sigma +\left ( \average _ { 2Q} |f|^ p \, d\sigma \right ) ^ { 1/p}\right ].
    $$
To complete the proof of \eqref{MainStep}, we write $ w = u-v$ and then use \eqref{7.2} to
obtain
\begin{equation}
\begin{aligned}
\average _ { \sball x { 4r}} M((\nabla w ^ *) ^ {\frac 1 2}  )^2
  \, d\sigma
&  \leq C \left (
  \average _ { \sball x { 4r}} M((\nabla u ^ *) ^ {\frac 1 2}  )^2
    \, d\sigma +
    \average _ { \sball x { 4r}} M((\nabla v ^ *) ^ {\frac 1 2}  )^2
      \, d\sigma\right ) \\
&      \leq
    C \left(    \average _ { \sball x { 4r}} M((\nabla u ^ *) ^ {\frac 1 2}  )^2
          \, d\sigma +\left ( \average_{2Q} |f|^p  \, d\sigma \right ) ^ { 1/p}\right).
\end{aligned}
\end{equation}
          Our claim \eqref{MainStep} follows.

\bibliographystyle{plain}


\def\cprime{$'$}

\end{bibunit}

\small \noindent \today 
\end{document}